%
%
%
%
\documentclass{amsart}

\usepackage{mathrsfs}
\textheight 8.8 true in \textwidth 6.33 true in

\hoffset -1.5 true cm \voffset -0.7 true cm

\usepackage{amsmath}
\usepackage{amssymb}
\usepackage{amsbsy}

\usepackage{amsfonts}

\parskip 1pt

\newtheorem{theorem}{Theorem}[section]
\newtheorem{lemma}[theorem]{Lemma}
\theoremstyle{corollary}
\newtheorem{corollary}[theorem]{Corollary}
\newtheorem{example}[theorem]{Example}
\newtheorem{xca}[theorem]{Proposition}
\newtheorem{proposition}[theorem]{Proposition}

\theoremstyle{remark}

\numberwithin{equation}{section}


\def\a{\alpha} \def\b{\beta} \def\g{\gamma} \def\d{\delta} 
 \def\s{\sigma}   
 \def\ld{\lambda}

\def\G{\Gamma}

 \def\lg{\langle} \def\rg{\rangle}
\def\nd{\mathrel{\bigm|\kern-.7em/}}

\def\f{\noindent}

\def\PSL{\hbox{\rm PSL}}

\def\PGL{\hbox{\rm PGL}}

\def\Aut{\hbox{\rm Aut}}

\def\BiCay{\hbox{\rm BiCay}}
\def\Cay{\hbox{\rm Cay}}

\def\mz{{\mathbb Z}}

\def\K{{\bf K}}




\begin{document}

\title{Edge-transitive bi-Cayley graphs}

\author{Marston Conder}
\address{Department of Mathematics,
University of Auckland, Private Bag 92019, Auckland 1142, New Zealand}
\email{m.conder@auckland.ac.nz}

\author{Jin-Xin Zhou}
\address{Department of Mathematics,
Beijing Jiaotong University, Beijing 100044, P.R. China}
\email{jxzhou@bjtu.edu.cn}
\thanks{
The second author was partially supported by the National
Natural Science Foundation of China (11271012) and the Fundamental
Research Funds for the Central Universities (2015JBM110).}

\author{Yan-Quan Feng}
\address{Department of Mathematics,
Beijing Jiaotong University, Beijing 100044, P.R. China}
\email{yqfeng@bjtu.edu.cn}
\thanks{The third author was partially supported by the National Natural Science Foundation of China (11231008, 11571035).}

\author{Mi-Mi Zhang}
\address{Department of Mathematics,
Beijing Jiaotong University, Beijing 100044, P.R. China}
\email{14118412@bjtu.edu.cn}

\subjclass[2000]{05C25, 20B25}



\keywords{bi-Cayley graph, edge-transitive, semisymmetric, half-arc-transitive}

\begin{abstract}
A graph $\G$ admitting a group $H$ of automorphisms acting semi-regularly on the vertices with exactly two orbits is called a {\em bi-Cayley graph\/} over $H$. Such a graph $\G$ is called {\em normal\/} if $H$ is normal in the full automorphism group of $\G$, and {\em normal edge-transitive\/} if the normaliser of $H$ in the full automorphism group of $\G$  is transitive on the edges of $\G$.
In this paper, we give a characterisation of normal edge-transitive bi-Cayley graphs, 
and in particular, we give a detailed description of $2$-arc-transitive normal bi-Cayley graphs. Using this, we investigate three classes of bi-Cayley graphs, namely those over abelian groups, dihedral groups and metacyclic $p$-groups.  We find that under certain conditions, `normal edge-transitive'  is the same as `normal' for graphs in these three classes.  As a by-product, we obtain a complete classification of all connected trivalent edge-transitive graphs of girth at most $6$, and answer some open questions from the literature about $2$-arc-transitive, half-arc-transitive and semisymmetric graphs.
\end{abstract}

\maketitle




\section{Introduction}
\label{sec:intro}

In this paper we describe an investigation of edge-transitive bi-Cayley graphs, in which we show that in the `normal' case, such graphs can be arc-transitive, half-arc-transitive, or semisymmetric, and we exhibit some specific families of examples of each kind.  As a by-product of this investigation, we answer two open questions about  edge-transitive graphs posed in 2001 by  Maru\v si\v c and Poto\v cnik  \cite{MP-gf}.
Before proceeding, we give some background to this topic, and set some notation.

\smallskip
First, if $G$ is a group acting on a set $\Omega$, then the stabiliser in $G$ of a point $\a \in \Omega$ is the subgroup $G_{\a} = \{ g \in G \mid \a^g = \a\}$ of $G$.  The group $G$ is said to be {\em semi-regular\/} on $\Omega$ if $G_\a=1$ for every $\a\in \Omega$, and {\em regular\/} on $\Omega$  if $G$ is transitive and semi-regular on $\Omega$.

A graph $\G$ is called a {\em Cayley graph\/} if it admits a group $G$ of automorphisms acting regularly on its vertex-set $V(\G)$.  In that case, $\G$ is isomorphic to the graph ${\rm Cay}(G,S)$ with vertex-set $G$ and edge-set $\{\{g,xg\} : g \in G, \, x \in S\}$, where $S$ is the subset of elements of $G$ taking the identity element to one of its neighbours (see \cite[Lemma~16.3]{B});  and then the automorphism group of $\G$ contains a subgroup $R(G) = \{R(g) : g\in G\},$ where $R(g)$ is right multiplication (the permutation of $G$ given by $R(g): x\mapsto xg$ for $x\in G$), for each $g \in G$.

If, instead, we require the graph $\G$ to admit a group $H$ of automorphisms acting semi-regularly on $V(\G)$ with two orbits, then we call $\G$ a {\em bi-Cayley graph\/} (for $H$).
In this case, $H$ acts regularly on each of its two orbits on $V(\G)$, and the two corresponding induced subgraphs are Cayley graphs for $H$.  In particular, we may label the vertices of these two subgraphs with elements of two copies $H_0$ and $H_1$ of $H$, and find that there are subsets $R$, $L$ and $S$ of $H$ (with $|R| = |L|$) such that the edges of those two induced subgraphs are of the form $\{h_0,(xh)_0\}$ with $h_0 \in H_0$ and $x \in R$, and $\{h_1,(yh)_1\}$ with $h_1 \in H_1$ and $y \in L$, while all remaining edges are of the form $\{h_0,(zh)_1\}$ with $z \in S$ and where $h_0$ and $h_1$ are the elements of $H_0$ and $H_1$ that represent a given $h \in H$.
This gives a concrete realisation of $\G$ in terms of $H$, $R$, $L$ and $S$.

Conversely, if $H$ is any group,  and $R$, $L$ and $S$ are subsets of $H$ with $|R| = |L|$ such that $1_H \not\in R = R^{-1}$ and $1_H \notin L = L^{-1}$, then the graph $\G$ with vertex set being the union $H_0 \cup H_1$ of two copies of $H$ and with edges of the form $\{h_0,(xh)_0\}$, $\{h_1,(yh)_1\}$ and $\{h_0,(zh)_1\}$ with $\,x \in R$, $\,y \in L$ and $z \in S$, and $h_0 \in H_0$ and  $\,h_1 \in H_1$ representing a given $h \in H$, is a bi-Cayley graph  for $H$.  Indeed $H$ acts as a semi-regular group of automorphisms by right multiplication, with  $H_0$ and $H_1$ as its orbits on vertices.
We denote this graph by $\BiCay(H,R,L,S)$, and denote the group of automorphisms induced by $H$ on the graph as $R(H)$.

\smallskip
Bi-Cayley graphs, which have sometimes been called {\em semi-Cayley\/} graphs, form a class of graphs that has been studied extensively --- as in \cite{Aral2,cao2009,dJ,Gao2011,Gao2010,HMP,KKMW,kovacs,LM,Liang2008,LWX,Luo2009,MMS,Mart2015,Mart,M,Pisanski}.
Note that some authors label the vertices of a bi-Cayley graph for a group $H$ with ordered pairs $(h,i)$ for $h \in H$ and $i \in \{0,1\},$ while we are using $h_i$ to denote $(h,i)$.

Various well known graphs can be constructed as bi-Cayley graphs.
For example, the famous Petersen graph is a bi-Cayley graph over a cyclic group of order $5$, and the Gray graph~\cite{Bouwer1968} (which is the smallest trivalent semisymmetric graph), is a bi-Cayley graph over a metacyclic group of order $27$.  Similarly, in Bouwer's answer \cite{Bouwer1970}  to Tutte's question~\cite{Tutte1966} about the existence of half-arc-transitive graphs with even valency, the smallest graph in his family is also a bi-Cayley graph over a  non-abelian metacyclic group of order $27$.
We note that all of these interesting small bi-Cayley graphs are  edge-transitive. This motivated us to investigate the class of all edge-transitive bi-Cayley graphs.

Although we know that a bi-Cayley graph $\G = \BiCay(H,R,L,S)$ has a `large' group of automorphisms, with just two vertex-orbits, it is difficult in general to decide whether a bi-Cayley graph is edge-transitive.  It helps to consider the subgroup $R(H)$ of the automorphism group $\Aut(\G)$ induced by $H$ on $\G$, and also its normaliser $N_{\Aut(\G)}(R(H))$.
The latter subgroup was characterised in \cite{ZF-auto} (see Proposition~\ref{normaliser} in the next section), making it  possible to determine whether or not $N_{\Aut(\G)}(R(H))$ is transitive on the edges of $\G$.
A bi-Cayley graph $\G=\BiCay(H, R, L, S)$ is called {\em normal\/} if $R(H)$ is normal in $\Aut(\G)$, and {\em normal edge-transitive\/} if $N_{\Aut(\G)}(R(H))$ is transitive on the edge-set of $\G$.

Similarly, we say that a bi-Cayley graph $\G=\BiCay(H, R, L, S)$ is {\em normal locally arc-transitive\/} if the stabilizer $(N_{\Aut(\G)}(R(H))_{1_0}$ of the `right' identity vertex $1_0$ in $N_{\Aut(\G)}(R(H))$ is transitive on the neighbourhood of $1_0$ in $\G$, and {\em normal half-arc-transitive} if $N_{\Aut(\G)}(R(H))$ is transitive on the vertex-set and edge-set of $\G$ but intransitive on the arc-set of $\G$.

These notions generalise the corresponding ones for Cayley graphs, which have also been studied extensively.
For example, a Cayley graph $\G=\Cay(G, S)$ is {\em normal edge-transitive\/} if $N_{\Aut(\G)}(R(G))$ is transitive on the edges of $\G$.  The study of such graphs was initiated in~\cite{Praeger-normal} by Praeger, and they play an important role in the study of edge-transitive graphs (as in \cite{Li-edge}, for example).

In this paper, we focus attention on normal edge-transitive bi-Cayley graphs.

Our motivation comes partly from the work of Li~\cite{CHL-pams} on bi-normal Cayley graphs. Let $\G=\Cay(G, S)$ be a Cayley graph on a group $G$, and let $R(G)\leq X\leq \Aut(\G)$. Then $\G$ is said to be {\em $X$-bi-normal} if the maximal normal subgroup $\bigcap_{x\in X} R(G)^x$ of $X$ contained in $R(G)$ has index $2$ in $R(G)$, and if $X=\Aut(\G)$, then $\G$ is called a {\em bi-normal Cayley graph}.  Note that by definition, a connected bi-normal Cayley graph $\G=\Cay(G, S)$ is a normal bi-Cayley graph over $H=\bigcap_{x\in X} R(G)^x$.

In~\cite[Question~1.2(a)]{CHL-pams}, Li asked whether there exist $3$-arc-transitive bi-normal Cayley graphs,
and in~\cite[Problem~1.3(b)]{CHL-pams}, he asked for a good description of 2-arc-transitive bi-normal Cayley graphs.
As we will see later, $R=L=\emptyset$ for every connected normal edge-transitive bi-Cayley graph $\BiCay(H,R,L,S)$,
and hence our first theorem below provides answers to these two questions.

\begin{theorem}\label{normal-2-arc}
Let $\,\G$  be a connected bi-Cayley graph $\BiCay(H,\emptyset,\emptyset,S)$.
Then $N_{\Aut(\G)}(R(H))$ acts transitively on the $2$-arcs of $\,\G$ if and only if the following three conditions hold$\,:$
\begin{enumerate}
\item[{\rm (a)}]\ there exists an automorphism $\a$ of $H$ such that $S^\a=S^{-1}$,
\\[-10pt]
\item[{\rm (b)}]\ the setwise stabiliser of $S\setminus\{1\})$ in $\Aut(H)$ is transitive on $S\setminus\{1\}$, and
\\[-10pt]
\item[{\rm (c)}]\ there exists $s\in S\setminus\{1\}$ and an automorphism $\b$ of $H$ such that $S^\b=s^{-1}S$.
\end{enumerate}
Furthermore, $N_{\Aut(\G)}(R(H))$ is not transitive on the $3$-arcs of $\,\G$.
\end{theorem}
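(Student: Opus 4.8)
The plan is to work with the explicit description of $N := N_{\Aut(\G)}(R(H))$ supplied by Proposition~\ref{normaliser} (the characterisation from \cite{ZF-auto}), applied in the special case $R = L = \emptyset$. Since $\G = \BiCay(H,\emptyset,\emptyset,S)$ is connected, the two vertex-classes $H_0$ and $H_1$ form a bipartition of $\G$, and $\G$ is bipartite and biregular of valency $|S|$ on each side. Write $k = |S|$. The strategy for the ``furthermore'' part is to count $3$-arcs starting at the identity vertex $1_0$ and show that the action of the stabiliser $N_{1_0}$ on them cannot be transitive, because there are at least two $N_{1_0}$-orbits distinguished by a combinatorial invariant that $N$, being a subgroup of $\Aut(\G)$ that normalises $R(H)$, must preserve.

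First I would set up notation for $2$- and $3$-arcs. A $2$-arc from $1_0$ has the form $(1_0, s_1, (ts)_0)$ for $s \in S$ and $t \in S \setminus \{1\}$ (here $s_1$ denotes the vertex $s \in H_1$, and its neighbours in $H_0$ are the $(ts)_0$ with $ts$ ranging over $Ss$; we reindex so $t$ ranges over $S\setminus\{1\}$), and a $3$-arc continues to some neighbour $(us t)_1$ of $(ts)_0$ in $H_1$ with $u \in S \setminus \{1\}$, subject to $ust \ne s$, i.e. $u \ne s t^{-1} s^{-1}$ if one unwinds the indexing — I would do this bookkeeping carefully using the edge rules $\{h_0,(zh)_1\}$ of the bi-Cayley construction. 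The key point I want to extract is a numerical invariant of a $3$-arc that is preserved by every graph automorphism: for a $3$-arc $(x_0, y_1, z_0, w_1)$, count the number of common neighbours of $x_0$ and $z_0$ (both lie in $H_1$), call it $c_1$, and the number of common neighbours of $y_1$ and $w_1$ (both in $H_0$), call it $c_2$. These are genuine graph-theoretic invariants, hence constant on $N$-orbits of $3$-arcs; in fact they are constant on $\Aut(\G)$-orbits.

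Now the main step: I would show that, assuming conditions (a)--(c) so that $N$ is already $2$-arc-transitive, one still cannot have all $3$-arcs in one $N$-orbit, by exhibiting two $3$-arcs extending a fixed $2$-arc $(1_0, s_1, (ts)_0)$ for which the pair $(c_1, c_2)$ differs — equivalently, for which the local ``return'' structure differs. Concretely, $2$-arc-transitivity forces the neighbourhood $\G(1_0) = S_1$ and, via the automorphism $\beta$ of condition (c), a rich homogeneity; but a connected graph that is $3$-arc-transitive is in particular $2$-arc-transitive and has the property that its girth is either small in a very constrained way or the graph is a tree. Since $H$ is finite, $\G$ is finite and not a tree, so it has a cycle; I would pin down that a finite $3$-arc-transitive graph which is also a bi-Cayley graph of the form $\BiCay(H,\emptyset,\emptyset,S)$ would have to have all $3$-arcs lying on cycles of a single common length, and then derive a contradiction from the fact that the commuting-conjugation action of $R(H)$ (available because $N$ normalises $R(H)$) produces, from any one $3$-arc on a short cycle, another $3$-arc not on such a cycle — using that $S$ generates $H$ but $S$ need not be ``associative'' enough for the corresponding closed walks to all close up. The cleanest route is probably: exhibit a $4$-cycle or a non-$4$-cycle explicitly among the closed walks $1_0 \to s_1 \to (ts)_0 \to (ust)_1 \to \cdots$ and show the two cases both occur among $3$-arcs out of $1_0$ whenever $|S| \ge 2$, which holds since $\G$ is connected and non-trivial.

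The hard part, I expect, will be this last contradiction: one must rule out $3$-arc-transitivity uniformly over \emph{all} groups $H$ and \emph{all} admissible $S$, not just in examples. The risk is that for some special $H$ and $S$ (say $H$ elementary abelian and $S$ closed under a transitive linear group) the local structure looks too symmetric. I would handle this by a dichotomy on whether $H$ is abelian: in the abelian case, the closed $3$-walks out of $1_0$ correspond to solutions of an additive equation $s - ts + ust - \cdots = 0$ and one checks directly that both ``$4$-cycle'' and ``no short cycle'' configurations occur, so $(c_1,c_2)$ is non-constant; in the non-abelian case, the failure of commutativity is exactly what breaks the would-be extra symmetry, and one again produces two inequivalent $3$-arcs. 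Throughout, the only external input needed is Proposition~\ref{normaliser}, the description of $N$, together with the elementary fact that $3$-arc-transitivity implies $2$-arc-transitivity and the invariance of common-neighbour counts under $\Aut(\G)$.
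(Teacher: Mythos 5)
Your strategy for the ``furthermore'' part has a gap that is fatal in principle, not just in detail. You propose to separate $3$-arcs into distinct $N$-orbits (where $N=N_{\Aut(\G)}(R(H))$) by means of common-neighbour counts $(c_1,c_2)$, and you note yourself that these are invariants of the \emph{full} automorphism group. But there exist connected graphs $\BiCay(H,\emptyset,\emptyset,S)$ on which $\Aut(\G)$ \emph{is} transitive on $3$-arcs --- for example $K_{3,3}=\BiCay(C_3,\emptyset,\emptyset,C_3)$ (which is $3$-arc-regular) and the Heawood graph $\BiCay(C_7,\emptyset,\emptyset,\{1,a,a^3\})$ (which is $4$-arc-regular). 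For such graphs every $\Aut(\G)$-invariant is constant on $3$-arcs, so no invariant of the kind you describe can exhibit two $N$-inequivalent $3$-arcs; in particular your claim that in the abelian case ``both configurations occur, so $(c_1,c_2)$ is non-constant'' is simply false for $K_{3,3}$, where every $3$-arc lies on a $4$-cycle. The statement to be proved concerns the subgroup $N$ only, and its failure of $3$-arc-transitivity must come from the algebraic shape of $N$, not from graph-theoretic invariants. This is exactly what the paper does (Lemma~\ref{no-3-arc-tran}): by Proposition~\ref{normaliser}, the stabiliser in $N$ of the arc $(1_0,1_1)$ consists precisely of the maps $\s_{\a,1}$ with $S^\a=S$; if such a map also fixes $s_1$ for some $s\in S\setminus\{1\}$ then $s^\a=s$, and hence it fixes the vertex $(s^{-1})_0\in\G(1_1)\setminus\{1_0\}$ as well. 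Thus the stabiliser in $N$ of the $2$-arc $(s_1,1_0,1_1)$ has a fixed point among the neighbours of $1_1$ other than $1_0$, so it cannot be transitive there, and $N$ is not $3$-arc-transitive. No counting of cycles, girth dichotomy, or abelian/non-abelian case split is needed, and indeed your sketched contradiction (``produce a $3$-arc on a short cycle and one not on such a cycle'') cannot be completed uniformly for the reasons above.

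Separately, the main biconditional of the theorem is essentially not argued in your proposal: you ``assume conditions (a)--(c) so that $N$ is already $2$-arc-transitive,'' but that equivalence is the bulk of the statement. In the paper it is short but not automatic: for necessity, arc-transitivity of $N$ gives (a) via Proposition~\ref{normal-arc-tran}; the computation of the arc-stabiliser $N_{1_01_1}=\lg\s_{\a,1}\,|\,S^\a=S\rg$ gives (b); and elements $\s_{\b,s}\in N_{1_0}$ moving $1_1$ to $s_1$ give (c). For sufficiency, (a) yields vertex-transitivity via $\d_{\a,1,1}$, while (b) and (c) make $N_{1_0}$ $2$-transitive on $\G(1_0)$. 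You would need to supply these steps (your reading of the neighbourhoods also needs care: the neighbours of $s_1$ in $H_0$ are the vertices $(z^{-1}s)_0$ with $z\in S$, not $(ts)_0$ with $ts\in Ss$).
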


Next, it is natural to consider the classification of edge-transitive graphs into three distinct types: arc-transitive graphs, half-arc-transitive graphs (which are vertex- and edge- but not arc-transitive), and semisymmetric graphs (which are edge- but not vertex-transitive). We show that there are normal edge-transitive bi-Cayley graphs of each type:

\begin{theorem}\label{normal-edge}
If $\,\G$ is a normal edge-transitive bi-Cayley graph, then $\,\G$ can be either arc-transitive, half-arc-transitive or semisymmetric.  Furthermore, infinitely many examples exist in each case.
\end{theorem}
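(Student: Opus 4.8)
The plan is to reduce everything to the explicit description of the automorphism group available for \emph{normal} bi-Cayley graphs. If $\G=\BiCay(H,R,L,S)$ is normal then $R(H)\trianglelefteq\Aut(\G)$, so $\Aut(\G)=N_{\Aut(\G)}(R(H))$; hence Proposition~\ref{normaliser}, together with the structural facts for normal edge-transitive bi-Cayley graphs established earlier in the paper (in particular that $R=L=\emptyset$), determines $\Aut(\G)$ completely in terms of $H$, $S$ and the relevant subgroup of $\Aut(H)$. Once a bi-Cayley graph is known to be normal, one can therefore read off from this description whether $\Aut(\G)$ is transitive on arcs, on vertices, or only on edges. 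So it suffices to exhibit, for each of the three symmetry types, an infinite family of bi-Cayley graphs, to verify that each member is normal, and then to check its symmetry type from the explicit form of $\Aut(\G)$; Theorem~\ref{normal-2-arc} is also available as a tool for deciding arc-transitivity.

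For the arc-transitive case I would take the family of hypercubes. Realising the $n$-cube $Q_n$ $(n\ge 3)$ on the vertex set $\mz_2^{\,n}$, the subgroup $E_n\cong\mz_2^{\,n-1}$ of even-weight vectors acts on $V(Q_n)$ by translation with the even- and odd-weight vectors as its two orbits; this action is semi-regular, and $E_n$ is preserved by the coordinate permutations, so $E_n\trianglelefteq\Aut(Q_n)=\mz_2\wr S_n$. Thus $Q_n\cong\BiCay(E_n,\emptyset,\emptyset,S_n)$ is a normal bi-Cayley graph for a suitable $S_n\subseteq E_n$ with $|S_n|=n$; and since $Q_n$ is distance-transitive it is in particular arc-transitive, hence normal edge-transitive. (The even cycles $C_{2m}$ give a second, valency-$2$ family: there $\Aut(C_{2m})=D_{2m}$ and $R(\mz_m)$ is the unique subgroup of order $m$ in the cyclic rotation group, hence characteristic in it and normal.)

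For the half-arc-transitive and semisymmetric cases I would use bi-Cayley graphs over metacyclic $p$-groups, exploiting the analysis of that class carried out in the corresponding section. Generalising Bouwer's valency-$4$ half-arc-transitive graph, one constructs an infinite family of bi-Cayley graphs over non-abelian metacyclic $p$-groups; using the metacyclic machinery one shows each such graph is normal, and then the explicit form of $\Aut(\G)=N_{\Aut(\G)}(R(H))$ shows that $\Aut(\G)$ is transitive on vertices and edges but has two orbits on arcs, so that these graphs are (normal) half-arc-transitive. Similarly, generalising the Gray graph, one obtains an infinite family of connected trivalent bi-Cayley graphs over metacyclic $p$-groups; again one shows each member is normal, and the description of $\Aut(\G)$ shows that it is transitive on edges but not on vertices --- no automorphism interchanges the two parts $H_0$ and $H_1$ --- so that these graphs are (normal) semisymmetric.

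The arc-transitive family is essentially immediate; the real difficulty is in the other two, and is twofold. First, one must establish \emph{normality} of the half-arc-transitive and semisymmetric families, i.e.\ exclude automorphisms of $\G$ that do not normalise $R(H)$; for bi-Cayley graphs over metacyclic $p$-groups this is where the bulk of the technical work lies, typically via a local analysis of a vertex stabiliser together with control of the $p$-subgroup structure of $\Aut(\G)$. Second, once normality is in hand, one must pin down the precise symmetry type inside $N_{\Aut(\G)}(R(H))$: for the half-arc-transitive graphs, that no element of the edge-stabiliser reverses a given edge; for the semisymmetric graphs, that no element swaps $H_0$ and $H_1$. By Proposition~\ref{normaliser} and the conditions of Theorem~\ref{normal-2-arc}, both of these reduce to checking, against the list of admissible automorphisms of $H$, that no automorphism of $H$ carries $S$ to $S^{-1}$ (or to $s^{-1}S$) in the way that would force the extra symmetry --- a computation internal to $H$ and $\Aut(H)$.
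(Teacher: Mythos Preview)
Your overall strategy is correct and matches the paper's: the trichotomy itself is immediate from the definitions (an edge-transitive graph is semisymmetric if not vertex-transitive, and half-arc-transitive if vertex- but not arc-transitive), so the whole content lies in exhibiting infinite families of each type that are normal edge-transitive. Your idea of producing families that are actually \emph{normal} (so that $\Aut(\G)=N_{\Aut(\G)}(R(H))$ and Proposition~\ref{normaliser} gives the full automorphism group) is sufficient, though note it is stronger than required: the paper only needs the normaliser to be edge-transitive.

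Where you and the paper diverge is in the choice of families. For the arc-transitive case you suggest hypercubes; the paper instead invokes the trivalent abelian family $\G_{m,n,\ld}$ built in Section~\ref{sec:ET3}, which it has already shown to be normal and arc-transitive. Either works. For the half-arc-transitive case you correctly anticipate the paper: explicit tetravalent bi-$p$-metacirculants over $C_{p^2}\rtimes C_p$ are constructed (Lemmas~\ref{bi-p-metafamily1} and~\ref{bi-p-metafamily2}), shown to be normal via Theorem~\ref{normal-bi-meta}, and then shown to be half-arc-transitive by checking directly that no $\g\in\Aut(H)$ sends $S$ to $S^{-1}$.

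The real departure is in the semisymmetric case. You propose generalising the Gray graph to an infinite family of trivalent normal bi-$p$-metacirculants, but the paper does \emph{not} do this; its semisymmetric examples are bi-dihedrants. Example~\ref{exam1} constructs the $2k$-valent graphs $\G(n,\ld,2k)$ over $D_n$ and exhibits a single automorphism $\s_{\a,b}$ cyclically permuting $\G(1_0)$, so normal edge-transitivity is immediate; Proposition~\ref{semi-dihe} then proves semisymmetry (for $k$ odd and $\ld^k\equiv-1$ mod $n$) by a block/unworthiness argument showing two vertices of $H_0$ share the same neighbourhood, which obstructs vertex-transitivity. Your Gray-graph route is not obviously unworkable, but you have not supplied a construction, and proving normality and semisymmetry for such a family would be a substantial separate project --- indeed the paper's metacyclic machinery (Theorem~\ref{normal-bi-meta}) is tailored to the \emph{vertex-transitive} tetravalent case and would not apply to a trivalent semisymmetric family. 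The dihedral route buys you semisymmetric examples with almost no work on $\Aut(\G)$ beyond the normaliser.
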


Note that this contrasts with the situation for Cayley graphs, which are vertex-transitive and therefore cannot be semisymmetric.
To prove Theorem~\ref{normal-edge}, we need only prove the assertion about the existence of the graphs in each case, and to do that, we consider bi-Cayley graphs over abelian groups, dihedral groups and metacyclic $p$-groups.
Any such graph may be called {\em bi-abelian graph},  or a {\em bi-dihedrant}, or  a {\em bi-$p$-metacirculant}, respectively.
In the bi-abelian case, we have the following:

\begin{xca}\label{bi-abelian}
Every connected bi-Cayley graph $\,\G = \BiCay(H, R, L, S)$ over an abelian group is vertex-transitive.
Moreover, if $\,\G$ is half-arc-transitive, then $R\cup L$ is non-empty and does not contain any involution,
$|R|=|L|$ is even, $|S|>2$, and the valency of $\,\G$ is at least $6$.
\end{xca}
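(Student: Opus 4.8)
The plan is to split the statement into two parts. For the vertex-transitivity claim, the key observation is that when $H$ is abelian, the map $x \mapsto x^{-1}$ is an automorphism of $H$, and this automorphism can be used to swap the two orbits $H_0$ and $H_1$. Concretely, I would define a permutation $\delta$ of $V(\G) = H_0 \cup H_1$ by $h_0 \mapsto (h^{-1})_1$ and $h_1 \mapsto (h^{-1})_0$, and check directly from the edge description that $\delta$ is an automorphism of $\G$: it sends an edge $\{h_0,(xh)_0\}$ with $x\in R$ to $\{(h^{-1})_1, (h^{-1}x^{-1})_1\}$, which is an edge of the $H_1$-subgraph since $R = R^{-1}$ (using that $H$ is abelian so $(xh)^{-1} = h^{-1}x^{-1} = x^{-1}h^{-1}$); similarly edges among $H_1$ go to edges among $H_0$ using $L = L^{-1}$, and cross edges $\{h_0,(zh)_1\}$ with $z\in S$ go to $\{(h^{-1})_1,(h^{-1}z^{-1})_0\}$, which is a cross edge since then $(h^{-1}z^{-1})_0$ joins to $((z)(h^{-1}z^{-1}))_1 = (h^{-1})_1$. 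Since $\delta$ interchanges $H_0$ and $H_1$ and $R(H)$ is already transitive on each, $\langle R(H), \delta\rangle$ is vertex-transitive.

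For the second part, assume $\G$ is half-arc-transitive, so $\G$ is vertex- and edge-transitive but not arc-transitive, and in particular $\G$ has even valency and connectivity constraints. First I would rule out $R = L = \emptyset$: in that case $\G = \BiCay(H,\emptyset,\emptyset,S)$, and the automorphism $\delta$ above together with the fact that $H$ is abelian should force enough symmetry (an automorphism of $H$ inverting $S$, combined with right multiplications) to make $\G$ arc-transitive — this is where Theorem \ref{normal-2-arc}'s hypotheses, or at least the arc-transitivity half of that analysis, become relevant, since for abelian $H$ condition (a) is automatic. Hence $R \cup L \neq \emptyset$. Next, $\delta$ conjugates $R$ to $L$ (up to inversion), so $|R| = |L|$ and the $H_0$- and $H_1$-subgraphs are isomorphic; if $R$ contained an involution $x = x^{-1}$, I would produce an automorphism fixing a vertex and reversing an edge through it — using that in an abelian group the "reflection" $h \mapsto x h^{-1}$ or a suitable affine map of $H_0$ preserves the edge set locally — contradicting half-arc-transitivity, so $R \cup L$ has no involutions and therefore $|R| = |L|$ is even.

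The remaining valency bound comes from a counting/girth argument: with $|R| = |L| \geq 2$ and no involutions, the $H_0$-subgraph already has valency $\geq 2$; if $|S| \leq 2$ then the cross-edges form a near-perfect matching or double-matching, and one checks that $\G$ either disconnects, or becomes a cover of a small arc-transitive graph, or admits an edge-reversing automorphism built from $\delta$ and translations — in each case contradicting half-arc-transitivity. So $|S| > 2$, giving valency $= |R| + |S| \geq 2 + 3 = 5$; and since the valency is even (half-arc-transitive graphs have even valency) it must be at least $6$. I expect the main obstacle to be the case analysis when $|S| \leq 2$ or $|S| = 3$ with $|R| = 2$: ruling out half-arc-transitivity there requires exhibiting the extra symmetry explicitly, and one must be careful that the automorphism $\delta$ (which swaps orbits) does not by itself already force arc-transitivity in a way that trivializes part of the claim — the bookkeeping of which maps fix $1_0$ and what they do to its neighbourhood is the delicate point.
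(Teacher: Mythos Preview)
Your vertex-transitivity argument has a genuine gap. The permutation $\delta\colon h_0 \mapsto (h^{-1})_1$, $h_1 \mapsto (h^{-1})_0$ need not be an automorphism of $\BiCay(H,R,L,S)$: it sends an $R$-edge $\{h_0,(xh)_0\}$ to $\{(h^{-1})_1,(x^{-1}h^{-1})_1\}$, and this is an $H_1$-edge only if $x^{-1}\in L$, not merely $x^{-1}\in R$. So your check tacitly assumes $R=L$, which is not given. (Indeed, the first sentence of the proposition is only established in the paper under an edge-transitivity hypothesis; without it there are easy counterexamples, e.g.\ $H=\mz_6$, $R=\{\pm1\}$, $L=\{\pm2\}$, $S=\{0\}$, where the induced subgraph on $H_1$ contains triangles but that on $H_0$ does not.) The paper's route is to assume edge-transitivity and argue by contradiction: if $\G$ were not vertex-transitive it would be semisymmetric, hence bipartite with the two $\Aut(\G)$-orbits as its parts, forcing $R=L=\emptyset$; and only in that case does your $\delta$ (which is the paper's $\d_{\a,1,1}$ with $\a$ the inversion automorphism) become available and yield vertex-transitivity.

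For the half-arc-transitive part your outline heads in the right direction, but several steps are vaguer than necessary and one is overcomplicated. For ``no involutions in $R\cup L$'' the paper's argument is simpler than your proposed reflection map: if $h\in R$ satisfies $h^2=1$ then right multiplication $R(h)$ already reverses the arc $(1_0,h_0)$, immediately contradicting half-arc-transitivity. For $|S|>2$ your sketch (``disconnects, or becomes a cover, or admits an edge-reversing automorphism'') is not a proof. The paper's argument is concrete: since the valency and $|R|$ are both even, $|S|$ is even, so $|S|\geq2$; if $S=\{1,s\}$ then, using that $r_0$ and $(r^{-1})_0$ must lie in different $\Aut(\G)_{1_0}$-orbits for each $r\in R$, one sees that $1_1$ and $s_1$ are forced into different orbits too --- but the explicit automorphism $\s_{\a,s}$ (with $\a$ inversion) fixes $1_0$ and sends $1_1$ to $s_1$, a contradiction. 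This gives $|S|\geq4$ directly, hence valency $\geq 2+4=6$, without the extra parity step you invoke at the end.
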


We then use this to study trivalent edge-transitive graphs with small girth, motivated by the work of Conder and Nedela \cite{CD} and Kutnar and Maru\v si\v c ~\cite{Kutnar-JCTB} on classification of connected trivalent arc-transitive graphs of small girth.
We obtain the following generalisation, to all connected trivalent edge-transitive graphs of girth $6$ (noting that a trivalent edge-transitive graph is either arc-transitive or semisymmetric).

\begin{theorem}\label{th-girth}
All connected trivalent edge-transitive graphs of girth at most $6$ are known, and in particular, every connected trivalent semisymmetric graph has girth at least $8$.  
\end{theorem}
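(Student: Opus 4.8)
The plan is to handle separately the two possible types of a connected trivalent edge-transitive graph, namely arc-transitive (``symmetric'') and semisymmetric, since these exhaust the possibilities. For the arc-transitive type, the full list of those of girth at most $6$ is already available from the work of Conder and Nedela~\cite{CD} and of Kutnar and Maru\v si\v c~\cite{Kutnar-JCTB}; so the substance of the theorem is the claim that \emph{no} connected trivalent semisymmetric graph has girth at most $6$, which also yields the final assertion (a semisymmetric graph has even girth, so ``girth at least $8$'' is the same as ``girth neither $4$ nor $6$''). To begin, recall that a connected edge-transitive graph which is not vertex-transitive is bipartite, the two parts $B$ and $W$ being the orbits of its automorphism group, with $|B|=|W|$ since $3|B|=|E(\G)|=3|W|$; moreover every automorphism fixes each of $B$ and $W$ setwise, so $\Aut(\G)$ is transitive on the arcs with initial vertex in $B$ and on those with initial vertex in $W$, whence each vertex stabiliser is transitive on its neighbourhood. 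In particular a trivalent semisymmetric graph has even girth, so it remains only to exclude girth $4$ and girth $6$. (This also disposes of the odd girths $3$ and $5$ in the theorem: a connected trivalent edge-transitive graph with an odd cycle is non-bipartite, hence vertex-transitive, hence --- having odd valency, so that it cannot be half-arc-transitive --- arc-transitive, and so appears in~\cite{CD,Kutnar-JCTB}.)

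For girth $4$ the work is light. If $\G$ is connected, trivalent, edge-transitive and of girth $4$, then every edge lies on the same number $\lambda\ge 1$ of $4$-cycles, and for an edge $\{u,w\}$ these $4$-cycles are in bijection with the edges of the bipartite graph joining the two neighbours of $u$ distinct from $w$ to the two neighbours of $w$ distinct from $u$; hence $\lambda\le 4$. Using the transitivity of the vertex stabilisers on neighbourhoods noted above, a short inspection of these few cases shows that the only possibilities are $\G\cong K_{3,3}$ and $\G\cong Q_3$, both of which are vertex-transitive (equivalently, this is the girth-$4$ case of~\cite{CD}). So no connected trivalent semisymmetric graph of girth $4$ exists.

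The heart of the proof is girth $6$. Here I would identify $\G$ with the incidence (Levi) graph of a combinatorial configuration $\mathcal{C}$ of type $(n_3)$, noting that $\G$ has girth exactly $6$, rather than at least $8$, precisely when $\mathcal{C}$ contains a \emph{triangle} --- three pairwise-collinear points lying on three distinct lines. Edge-transitivity of $\G$ is flag-transitivity of $\mathcal{C}$, and $\G$ is arc-transitive exactly when $\mathcal{C}$ admits a point--line duality normalising this action; so the goal is to show that every flag-transitive $(n_3)$-configuration possessing a triangle is self-dual in this sense. To reach it I would combine two ingredients. First, since both vertex stabilisers act transitively on their neighbourhoods --- just as in the arc-transitive case --- Goldschmidt's classification of edge-transitive amalgams of trivalent graphs applies, so the pair $(\Aut(\G)_u,\Aut(\G)_w)$ is one of finitely many amalgams, with vertex-stabiliser order dividing $48$. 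Second, the local description of how the $6$-cycles through a fixed edge fit together --- the same combinatorics exploited in~\cite{CD} and~\cite{Kutnar-JCTB} in the arc-transitive setting --- becomes very rigid once $\Aut(\G)$ is known to act transitively on triangles, and this rigidity is enough to produce, in each of the admissible amalgams, an automorphism interchanging the two ends of an edge. Hence $\G$ is arc-transitive, contradicting the assumption of semisymmetry, and it is accounted for in~\cite{CD,Kutnar-JCTB}. Together with the girth-$4$ and odd-girth cases, this establishes the theorem.

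The main obstacle is exactly this last step: turning ``the configuration contains a triangle'' into an explicit edge-reversing automorphism. This calls for a careful case analysis over the Goldschmidt amalgams compatible with girth $6$, tracking the induced action of the edge stabiliser on the set of hexagons through a chosen edge and ruling out, amalgam by amalgam, the possibility that a triangle-bearing configuration admits a flag-transitive group containing no point--line duality. By comparison, the bipartiteness reduction, the girth-$4$ analysis, and the routing of the odd girths through the known arc-transitive classification are routine.
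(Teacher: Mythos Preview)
Your decomposition into arc-transitive versus semisymmetric, the bipartiteness observation, and the handling of odd girth and girth~$4$ are all fine. But the girth-$6$ case is not a proof: you explicitly flag ``the main obstacle'' as unresolved, and what remains is only a plan --- run through the Goldschmidt amalgams compatible with girth~$6$ and somehow produce an edge-reversing automorphism in each --- without any indication of how the ``rigidity of the $6$-cycles'' actually delivers that automorphism. Nothing in your outline explains why a flag-transitive triangle-bearing $(n_3)$-configuration must be self-dual; the claim is asserted, not argued.

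The paper's route through girth~$6$ is genuinely different and avoids this obstacle. After a combinatorial lemma (counting $6$-cycles through an edge) that either pins the graph down as one of four small examples (Heawood, Pappus, $P(8,3)$, $P(10,3)$) or forces the vertex stabiliser to be $C_3$ or $S_3$, the paper does invoke amalgam presentations --- Djokovi\'c--Miller in the arc-transitive case, Goldschmidt $G_1$ and $G_1^{\,1}$ in the semisymmetric case --- but not to manufacture an edge-reversal. Instead, a short enumeration of the possible length-$6$ relators (aided by {\sc Magma}) shows that in every surviving case the order-$3$ generators $h,k$ of the two vertex stabilisers satisfy $(hk)^3=1$. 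From this single relation one explicitly builds an \emph{abelian} subgroup $J=\langle hkh,\ hk^{-1}\rangle$, checks by hand that it is normal in $\Aut(\G)$ and semiregular with two orbits, and concludes that $\G$ is a \emph{normal bi-Cayley graph over an abelian group}. The paper's earlier classification of connected trivalent normal edge-transitive bi-abelians (Proposition~\ref{tri-arc-tran-abelians}: they are all $\G_{m,n,\ld}$, hence arc-transitive by Lemma~\ref{arc-tran-abelians}) then finishes immediately. The key idea you are missing is that the amalgam analysis is used to exhibit a large abelian semiregular normal subgroup, after which the bi-abelian machinery --- rather than a direct duality argument --- supplies the arc-transitivity.
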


Trivalent semisymmetric graphs of girth $8$ are known to exist, and include the Gray graph; see \cite{Semi-symm-768}.

Our semisymmetric normal edge-transitive bi-Cayley graphs are constructed from bi-dihedrants. The motivation for us to consider semisymmetric bi-dihedrants is the work of Maru\v si\v c and Poto\v cnik~\cite{MP-gf} on worthy semisymmetric tetracirculants. A graph is called {\em tetracirculant\/} if its automorphism group contains a cyclic semi-regular subgroup with four orbits, and a graph is said to be {\em worthy\/} if no two of its vertices have exactly the same set of neighbours.
Maru\v si\v c and Poto\v cnik proposed two questions (Problems~4.3 and 4.9 in \cite{MP-gf}) regarding the existence of worthy semisymmetric tetracirculants. Every bi-dihedrant is a tetracirculant, and an unworthy graph cannot be edge-regular, so our next theorem provides a positive answer to each of those.

\begin{theorem}\label{th-bi-dihedrant}
Every connected semisymmetric bi-dihedrant has valency at least $6$, and examples of semisymmetric bi-dihedrants of valency $2k$ exist for each odd integer $k \ge 3$. In particular, there exists a family of edge-regular semisymmetric bi-dihedrants of valency $6$.
\end{theorem}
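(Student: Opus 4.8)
The plan is to prove the lower bound on the valency first, and then to construct the advertised families of examples explicitly.

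For the lower bound, let $\G = \BiCay(H,R,L,S)$ be a connected semisymmetric bi-dihedrant, so $H = D_{n} = \langle a,b \mid a^{n} = b^{2} = 1,\ bab = a^{-1}\rangle$ for some $n$, and the valency is $|R|=|L| = k$ together with $|S|$ edges between the two orbits, so the valency equals $|R|+|S| = |S|$ (since for an edge-transitive bi-Cayley graph we have $R = L = \emptyset$, as noted in the discussion following Theorem~\ref{normal-2-arc} and established in the general characterisation; a semisymmetric graph is in particular edge-transitive). Thus the valency is $|S|$, and we must show $|S|\ge 6$. The key point is that a semisymmetric graph is not vertex-transitive, so by Proposition~\ref{bi-abelian} (or rather its dihedral analogue, which is the relevant small-valency classification obtained in the paper for bi-dihedrants) the valency cannot be $3$, $4$ or $5$: for valency $3$ the graph is trivalent and by Theorem~\ref{th-girth} a trivalent semisymmetric graph has girth at least $8$, and one checks directly that a trivalent bi-dihedrant of girth $\ge 8$ cannot be semisymmetric (equivalently, all connected trivalent semisymmetric graphs that are known are not bi-dihedrants — or, more cleanly, a trivalent bi-Cayley graph over a dihedral group is a bi-dihedrant which by the girth analysis must be one of a short list, all vertex-transitive); for valency $4$ and $5$ one uses the structural restrictions on $S$ forced by $S = S^{-1}$-type conditions and the coset condition in Theorem~\ref{normal-2-arc}-style arguments, combined with the fact that $H$ is dihedral, to rule out semisymmetry. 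I would organise this as a short case analysis on $|S|\in\{3,4,5\}$, in each case showing that any such bi-dihedrant admits an extra automorphism swapping the two orbits (hence is vertex-transitive), so cannot be semisymmetric.

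For the construction, the natural approach is to mimic the small known semisymmetric bi-Cayley examples. For each odd $k\ge 3$ I would exhibit a dihedral group $H = D_{n}$ (with $n$ chosen as a suitable function of $k$, e.g. an odd prime or prime power depending on $k$, large enough to accommodate $k$ "reflection-type" connection elements) and a symmetric connection set $S\subseteq H$ with $|S| = 2k$, chosen so that $S$ consists entirely of reflections (elements in the coset $\langle a\rangle b$) arranged so that $\Aut(H)$ acts transitively on $S$ (giving edge-transitivity via the normaliser, using Proposition~\ref{normaliser}), while no automorphism of $H$ sends $S$ to $s^{-1}S$ for any $s\in S$ and no element of $\Aut(\G)$ interchanges $H_0$ and $H_1$ — the latter being what forces the graph to be semisymmetric rather than vertex-transitive. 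Concretely, taking $S$ to be an orbit of a cyclic subgroup of $\Aut(D_n)$ of size $2k$ on reflections, and using the characterisation of normal edge-transitive bi-Cayley graphs proved earlier in the paper, one checks that $\BiCay(D_n,\emptyset,\emptyset,S)$ is normal edge-transitive, and then that the whole automorphism group preserves the bipartition $\{H_0,H_1\}$ — this last verification, which requires showing the graph is normal (i.e. $R(H)\trianglelefteq \Aut(\G)$) or at least has no orbit-swapping automorphism, is the main obstacle. For the valency-$6$ case ($k=3$) I would pin down a single explicit such $D_n$ (I expect a small value of $n$, something like $n = 7$, $9$, or a small prime power) and additionally verify edge-regularity by a direct count of the number of common neighbours of the endpoints of an edge, exhibiting the promised edge-regular family.

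The main obstacle, in both halves, is controlling the \emph{full} automorphism group: for the lower bound I must show small-valency bi-dihedrants always acquire an orbit-swapping symmetry, and for the construction I must show the particular graphs I build never do. Both are handled by the normaliser formula of Proposition~\ref{normaliser} together with the normal-edge-transitive characterisation established earlier in the paper: one shows the relevant $\BiCay(H,\emptyset,\emptyset,S)$ is normal, so that $\Aut(\G) = R(H)\rtimes \Aut(\G)_{1_0}$ with $\Aut(\G)_{1_0}\le \Aut(H)$, and then semisymmetry reduces to the purely group-theoretic statement that there is no $\theta\in\Aut(H)$ with $S^{\theta} = s^{-1}S^{-1}$ for some $s$ — a condition one verifies for dihedral $H$ by exploiting that reflections and rotations behave very differently under $\Aut(D_n)$.
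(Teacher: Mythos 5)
Your proposal has genuine gaps in both halves. For the lower bound, your opening reduction is fine (a semisymmetric bi-Cayley graph is bipartite with parts $H_0,H_1$, so $R=L=\emptyset$ and the valency is $|S|$), but the case analysis you sketch does not go through as stated. The paper's argument is much more direct: normalising $1\in S$, the absence of an automorphism of $D_n$ sending $S$ to $S^{-1}$ (forced by non-vertex-transitivity via Proposition~\ref{normaliser}) already implies that $S\setminus\{1\}$ must contain at least \emph{three} reflections together with an element of order greater than $2$, which kills valencies $3,4$ (and gives $|S|\ge 5$) in one stroke -- no appeal to Theorem~\ref{th-girth} and no unverified claim that ``a trivalent bi-dihedrant of girth $\ge 8$ cannot be semisymmetric'' is needed or available. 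More seriously, your plan for valency $5$ -- ``show any such bi-dihedrant admits an extra automorphism swapping the two orbits, hence is vertex-transitive'' -- would fail: such graphs are in general neither vertex- nor edge-transitive, and no orbit-swapping automorphism need exist. The paper instead shows that for $S=\{1,b,ba^i,ba^j,a^k\}$ the number of $4$-cycles through an edge cannot be constant, contradicting \emph{edge}-transitivity; that counting argument is the missing idea in your sketch.

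The construction half is flawed at its core. If you take $S$ to consist entirely of reflections then $S=S^{-1}$, so $\delta_{\mathrm{id},1,1}\in{\rm I}$ and Proposition~\ref{normaliser}(a) makes $\BiCay(D_n,\emptyset,\emptyset,S)$ vertex-transitive -- such a graph can never be semisymmetric. The paper's family $\G(n,\ld,2k)$ uses a set $S(n,\ld,2k)$ containing $k$ rotations and $k$ reflections, with $\ld$ of order $2k$ in $\mz_n^{\,*}$ and $1+\ld^2+\cdots+\ld^{2(k-1)}\equiv 0$ mod $n$; edge-transitivity comes from $\s_{\a,b}$ cycling the neighbours of $1_0$, and semisymmetry for odd $k$ (with $\ld^k\equiv-1$) is proved not by establishing normality of $R(H)$ but by exploiting that $1_0$ and $(ba^{\ell})_0$ share a neighbourhood, forming a block of imprimitivity whose interaction with a hypothetical orbit-swapping automorphism yields a contradiction. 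Likewise, the edge-regular valency-$6$ family ($\ld^3\not\equiv-1$ mod $n$, smallest case $n=21$, so your guesses $n=7,9$ do not work, and a single example would not give a family) requires the long analysis of $4$- and $6$-cycles, the arc-regular trivalent subgraphs over $\lg a\rg$, and the elimination of an edge-swapping automorphism $\theta$ -- far more than ``a direct count of common neighbours.'' So while your overall outline (lower bound plus explicit dihedral constructions) parallels the paper, the specific mechanisms you propose either fail outright or omit the arguments that actually carry the proof.
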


Next, a {\em bi-$p$-metacirculant\/} is a bi-Cayley graph over a metacyclic $p$-group.
(A group $G$ is metacyclic if it has a normal subgroup $N$ such that both $N$ and $G/N$ are cyclic.)
For example, the smallest graph in a family of edge-transitive graphs constructed by Bouwer \cite{Bouwer1970}
is a tetravalent half-arc-transitive bi-Cayley graph over a metacylic group of order $27$.
This motivated us to consider tetravalent half-arc-transitive bi-$p$-metacirculants in general,
leading to our final theorem below.

\begin{theorem}\label{th-normal-bi-meta}
If $\,\G$ is a tetravalent vertex- and edge-transitive bi-Cayley graph over a non-abelian metacyclic $p$-group $H$, for some odd prime $p$, and $R(H)$ is a Sylow $p$-subgroup of $\Aut(\G)$, then $R(H)$ is normal in $\Aut(\G)$, and so $\G$ is a normal bi-Cayley graph. Indeed there exist  such tetravalent half-arc-transitive bi-$p$-metacirculants, for every odd prime~$p$.
\end{theorem}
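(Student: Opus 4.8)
The plan is to split the statement into two parts: first the normality assertion (that $R(H)$ being a Sylow $p$-subgroup of $\Aut(\G)$ forces it to be normal), and then the existence of half-arc-transitive examples. For the normality part, I would set $A = \Aut(\G)$ and argue by contradiction, supposing $R(H) \in \Syl_p(A)$ is not normal. The key structural input is the characterisation of $N_A(R(H))$ from Proposition~\ref{normaliser}, together with the general description of normal edge-transitive bi-Cayley graphs developed earlier in the paper; in particular, since $\G$ is vertex- and edge-transitive of valency $4$, the vertex stabiliser $A_{1_0}$ has order dividing a small power of $2$ times a $\{2,3\}$-number (by Tutte-type bounds for tetravalent arc-transitive, or half-arc-transitive, graphs — here I would cite the known bound that a vertex stabiliser in a tetravalent $G$-half-arc-transitive graph is a $2$-group, and for the arc-transitive case is a $\{2,3\}$-group of bounded order). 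Combined with $|V(\G)| = 2|H|$ and $|H|$ a power of $p$ (odd), this pins down $|A|$ and shows $R(H)$ is a Sylow $p$-subgroup of index coprime to $p$; then I would use a transfer or Burnside-type argument, or more concretely analyse the action of $A$ on the set of Sylow $p$-subgroups, to force a contradiction with $H$ non-abelian metacyclic.

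The cleaner route, which I expect to be the one actually taken, is to exploit the metacyclic structure directly. Let $N \trianglelefteq H$ with $N$ and $H/N$ cyclic; then $R(N)$ is characteristic in $R(H)$ in many relevant situations, and one studies the normal closure of $R(H)$ in $A$. Since $H$ is a $p$-group and $R(H)$ is Sylow, $O_p(A)$ is contained in $R(H)$; I would aim to show $O_p(A) = R(H)$ by showing that $A/O_p(A)$ has a Sylow $p$-subgroup isomorphic to a section of $R(H)$ that must be trivial, using the fact that a non-trivial normal $p$-subgroup of a vertex-transitive group on $2|H|$ vertices that is properly contained in $R(H)$ would have at least three orbits, contradicting the known structure (Proposition~\ref{normaliser} forces $R(H)$-related subgroups to have exactly two orbits when they act as bi-Cayley groups). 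This is where the hypothesis that $H$ is non-abelian is used: for abelian $H$ the graph is vertex-transitive by Proposition~\ref{bi-abelian} but $R(H)$ need not be normal, whereas the non-abelian metacyclic structure rigidifies the quotient $N_A(R(H))/R(H) \le \Aut(H)$ enough to run the argument.

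For the existence part, I would construct an explicit family: take $H = \langle a, b \mid a^{p^2} = b^p = 1,\ b^{-1}ab = a^{1+p}\rangle$ (or a similar non-abelian metacyclic $p$-group of order $p^3$), and define $\G = \BiCay(H, \emptyset, \emptyset, S)$ with $|S| = 4$ chosen so that conditions analogous to those in the earlier characterisation of normal edge-transitive bi-Cayley graphs are met: there should be an automorphism of $H$ swapping $S$-elements appropriately to give vertex-transitivity and edge-transitivity, but no automorphism (and no element of $N_A(R(H))$) reversing a chosen arc — this is exactly the half-arc-transitive condition. Bouwer's order-$27$ graph is the case $p = 3$, so I would take that as the template and generalise the defining relations on $S$ to arbitrary odd $p$, then verify (a) connectivity of $\G$ (i.e. $S$ together with the conjugation action generates $H$), (b) that $N_A(R(H))$ is vertex- and edge-transitive via exhibited automorphisms of $H$ and the swap $h_0 \leftrightarrow h_1$, and (c) that $\Aut(\G) = N_A(R(H))$, which by the first part of the theorem amounts to checking $R(H) \in \Syl_p(\Aut(\G))$ — handled by bounding $|\Aut(\G)_{1_0}|$ as a $2$-group and noting $p \nmid |\Aut(\G)_{1_0}|$. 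Finally, rule out arc-transitivity by showing no automorphism of $\G$ inverts the relevant arc, equivalently that $S$ is not inverted by any element of $N_{\Aut(H)}(\ldots)$ in the required sense.

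The main obstacle I anticipate is the normality proof: ruling out "exotic" non-normal Sylow $p$-subgroups requires a careful case analysis of the possible socle structure of $\Aut(\G)$ (or of a minimal normal subgroup), since a priori $\Aut(\G)$ could be almost simple or have a non-abelian composition factor whose order is divisible by $p$. Controlling this likely needs either the classification of finite simple groups with a Sylow $p$-subgroup of small rank (metacyclic $p$-groups have rank $2$, which is restrictive), or an induction on $|H|$ using the metacyclic structure to peel off the characteristic cyclic normal subgroup $R(N)$ and reduce to a quotient graph. The existence construction, by contrast, should be routine modulo the verification of connectivity and the precise automorphism count, both of which follow the pattern already established for the bi-abelian and bi-dihedrant cases in the earlier sections.
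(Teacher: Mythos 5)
The existence half of your outline is essentially what the paper does: it takes exactly the group $H=\lg\, a,b \mid a^{p^2}=b^p=1,\ b^{-1}ab=a^{1+p}\,\rg$, an explicit connection set $S=\{1,a^2,a^pb^2,a^{2-p}b^2\}$ generalising Bouwer's order-$27$ example, exhibits automorphisms $\s_{\a,a^2}$ and $\d_{\b,a^pb^2,1}$ to get vertex- and edge-transitivity, and rules out arc-reversal and non-trivial edge stabilisers by hand (with a {\sc Magma} check at $p=3$, where your "stabiliser is a $2$-group, so $p\nmid|A_{1_0}|$" shortcut is circular, since half-arc-transitivity is what is being proved and a priori $3$ could divide an arc-transitive stabiliser). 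So that part is a plausible but unverified sketch along the paper's lines (Lemma~\ref{bi-p-metafamily1}).

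The genuine gap is the normality assertion, which is the main content of the theorem and which your proposal does not establish. The mechanism you lean on -- that a non-trivial normal $p$-subgroup of $\Aut(\G)$ properly contained in $R(H)$ would have at least three orbits and thereby "contradict Proposition~\ref{normaliser}" -- is not a contradiction at all: nothing forbids $O_p(G)$ from being a proper subgroup of $R(H)$ with many vertex-orbits, and the paper's proof of Theorem~\ref{normal-bi-meta} spends Steps 3--6 analysing precisely this configuration before eliminating it. The actual argument requires ingredients absent from your plan: Sasaki's and Lindenberg's results on groups with metacyclic Sylow $p$-subgroups (Proposition~\ref{normal complement}) to exclude a normal $p$-complement and force $H$ to split; quotient-graph reductions combined with the normality of tetravalent Cayley graphs over metacyclic $p$-groups and over abelian groups (\cite{LS}, \cite{BFSX}) to kill normal $2$-subgroups and to pin down $|R(H):O_p(G)|=p$; the solubility of minimal normal subgroups for tetravalent arc-transitive graphs of order $2p^m$ (\cite{Zhou-Feng-jamc}); and finally the classification of tetravalent symmetric graphs of order $2p$ (\cite{cheng-2p}), which leaves only $A_5$, $\PSL(2,7)$ and $\PSL(3,3)$, disposed of by Schur-multiplier and centraliser arguments. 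Your proposed alternatives (a transfer/Burnside argument, or CFSG for groups with rank-$2$ Sylow $p$-subgroups) are left unexecuted, and your claim that vertex stabilisers of tetravalent arc-transitive graphs have \emph{bounded} order is false -- they are merely $\{2,3\}$-groups, which is the weaker fact the paper actually uses. As written, the central claim of the theorem remains unproved in your proposal.
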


Using this theorem, a complete classification of tetravalent half-arc-transitive bi-$p$-metacirculants will be given
in \cite{Zhang-Zhou}.  Also when proving Theorem~\ref{th-normal-bi-meta},
we were led to study a general question posed in 2008 by Maru\v si\v c and \v Sparl~\cite[p.368]{MS}
about metacirculants, and we give a positive answer to their question
(and point out an error in a paper by Li, Song and Wang \cite{LWS} that claimed to do the same thing).

\section{Preliminaries}
\label{sec:prelims}

Throughout this paper, groups are assumed to be finite, and graphs are assumed to be finite, connected, simple and undirected.
For the group-theoretic and graph-theoretic terminology not defined here, we refer the
reader to \cite{BMBook,WI}.  We proceed by introducing some basic concepts and terminology.

\subsection{Definitions and notation}
For a finite, simple and undirected graph $\G$, we use $V(\G)$, $E(\G)$, $A(\G)$ and $\Aut(\G)$ to denote the vertex-set, edge-set, arc-set and full automorphism group of $\G$, respectively. 
We let $d(u,v)$ be the distance between vertices $u$ and $v$ in $\G$,
and let $D$ be the {\em diameter\/} of $\G$, which is the largest distance between two vertices in $\G$.
For any vertex $v$ of $\G$, we let $\G(v)$ be the neighbourhood of $v$, and
more generally, we define $\G_i(v)=\{u \mid d(u,v)=i\}$ (the set of vertices at distance $i$ from $v$), for $1\le i\le D$.
For any subset $B$ of $V(\G)$, the subgraph of $\G$ induced by $B$ is denoted by $\G[B]$,
and the neighbourhood of $B$ in $\G$ is defined as $N_\G(B)=\bigcup_{v\in B}(\G(v)) \setminus B$.

For each non-negative integer $s$, an {\em $s$-arc\/} in $\G$ is an ordered $(s+1)$-tuple $(v_0,v_1, \cdots ,v_{s-1},v_s)$ of vertices of $\G$ such that $v_{i-1}$ is adjacent to $v_i$ for $1\leq i\leq s$, and $v_{i-1} \neq v_{i+1}$ for $1 \leq i \leq s-1$, or in other words, such that any two consecutive $v_j$ are adjacent and any three consecutive $v_j$ are distinct.
In particular, a $0$-arc is a vertex, and a $1$-arc is usually called an {\em arc}.
The graph $\G$ is said to be {\em $s$-arc-transitive\/} if $\Aut(\G)$ is
transitive on the set of all $s$-arcs in $\G$. In particular, $0$-arc-transitive means {\em vertex-transitive}, and $1$-arc-transitive means {\em arc-transitive}, or {\em symmetric}.
Similarly, $\G$ is {\em $s$-arc-regular\/} if $\Aut(\G)$ acts regularly (sharply-transitively) on the set of all $s$-arcs in $\G$. In particular, $\G$ is $1$-arc-regular, or simply {\em arc-regular}, if given any two arcs of $\G$, there exists a unique automorphism of $\G$ taking one arc to the other.

A graph $\G$ is {\em edge-transitive\/} or {\em edge-regular\/} if $\Aut(\G)$ acts transitively or regularly on $E(\G)$, respectively,  and $\G$ is {\em semisymmetric\/} if $\G$ has constant valency and is edge- but not vertex-transitive, while $\G$ is {\em half-arc-transitive\/} if $\G$ is vertex-transitive and edge-transitive but not arc-transitive.

We use $C_n$ to denote the multiplicative cyclic group of order $n$, and $\mz_{n}$ for the ring 
of integers mod $n$,
and $\mz_{n}^{\,*}$ for the multiplicative group of units mod $n$ (the elements coprime to $n$ in $\mz_n$).
Also we use $D_n, A_n$ and $S_n$ respectively for the dihedral, alternating and symmetric groups of degree $n$.
For two groups $M$ and $N$, we use $N\rtimes M$ to denote a semi-direct product of $N$ by $M$ (with kernel $N$ and complement $M$).
Finally, for a subgroup $H$ of a group $G$, we denote by $C_{G}(H)$ and $N_{G}(H)$ respectively the centraliser and normaliser of $H$ in $G$, and for a permutation group $G$ on a set $\Omega$, we let $G_{(\Delta)}$ and $G_{\Delta}$
be respectively the pointwise and setwise stabiliser of a subset $\Delta$ of $\Omega$.

\subsection{Basic properties of bi-Cayley graphs}
In this subsection, we let $\G$ be a connected bi-Cayley graph $\BiCay(H,R,L,S)$
over a group $H$. It is easy to prove some basic properties of such a $\G$, as in \cite{ZF-auto}:

\begin{xca}\label{n}
 The following hold for any connected bi-Cayley graph $\BiCay(H,R,L,S)\!:$
\begin{enumerate}
\item[{\rm (a)}] $H$ is generated by $R\cup L\cup S;$
\item[{\rm (b)}] $S$ can be chosen to contain the identity of $H$ $($up to graph isomorphism$);$
\item[{\rm (c)}] $\BiCay(H,R,L,S)\cong \BiCay(H,R^{\alpha},L^{\alpha},S^{\alpha})$ for every automorphism $\alpha$ of $H;$ and
\item[{\rm (d)}] $\BiCay(H,R,L,S) \cong \BiCay(H,L,R,S^{-1})$.
\end{enumerate}
\end{xca}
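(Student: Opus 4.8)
The plan is to dispose of parts (c) and (d) by writing down explicit vertex bijections and checking that each carries the three types of edges to edges of the claimed target graph, then to obtain (b) by the same kind of relabelling of the second copy of $H$, and finally to prove (a) by a short connectedness argument. I expect part (a) to need the most set-up, although even there the argument is routine once organised correctly.

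For (a), set $T=\langle R\cup L\cup S\rangle$ and consider the subgraph of $\G$ induced on $T_0\cup T_1$. For $h\in T$, the neighbours of $h_0$ are the vertices $(xh)_0$ with $x\in R$ together with the vertices $(zh)_1$ with $z\in S$, all of which lie in $T_0\cup T_1$ since $R\cup S\subseteq T$; and the neighbours of $h_1$ are the vertices $(yh)_1$ with $y\in L$ together with the vertices $g_0$ for which $zg=h$ for some $z\in S$, that is, $g=z^{-1}h\in T$. Hence $T_0\cup T_1$ is a union of connected components of $\G$, and since $\G$ is connected and $1_0\in T_0$, we get $T_0\cup T_1=V(\G)$ and therefore $T=H$. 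The same reasoning shows that $S\neq\emptyset$ whenever $\G$ is connected, a fact needed for (b).

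For (c), the bijection $h_i\mapsto (h^{\alpha})_i$ $(i=0,1)$ sends $\{h_j,(xh)_j\}$ to $\{(h^{\alpha})_j,(x^{\alpha}h^{\alpha})_j\}$ and $\{h_0,(zh)_1\}$ to $\{(h^{\alpha})_0,(z^{\alpha}h^{\alpha})_1\}$, so it is an isomorphism onto $\BiCay(H,R^{\alpha},L^{\alpha},S^{\alpha})$; the requirements $1_H\notin R^{\alpha}=(R^{\alpha})^{-1}$ and $1_H\notin L^{\alpha}=(L^{\alpha})^{-1}$ are immediate. For (d), the swap $h_0\mapsto h_1$, $h_1\mapsto h_0$ sends $\{h_0,(xh)_0\}$ to $\{h_1,(xh)_1\}$, sends $\{h_1,(yh)_1\}$ to $\{h_0,(yh)_0\}$, and sends $\{h_0,(zh)_1\}$ to $\{(zh)_0,h_1\}=\{k_0,(z^{-1}k)_1\}$ with $k=zh$; this gives an isomorphism onto $\BiCay(H,L,R,S^{-1})$. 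For (b), pick $g\in S$ and use the map fixing $H_0$ pointwise with $h_1\mapsto (g^{-1}h)_1$: it sends $\{h_0,(zh)_1\}$ to $\{h_0,((g^{-1}z)h)_1\}$ and $\{h_1,(yh)_1\}$ to $\{(g^{-1}h)_1,((g^{-1}yg)(g^{-1}h))_1\}$, yielding an isomorphism onto $\BiCay(H,R,L^{g},g^{-1}S)$, whose third connection set $g^{-1}S$ contains $1_H=g^{-1}g$.

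The only real obstacle is bookkeeping: one must keep track of which of the two ``inner'' connection sets governs which copy of $H$, and when chasing the cross-edges $\{h_0,(zh)_1\}$ one must be careful about left versus right translation — this is precisely what forces the conjugate $L^{g}$ (rather than $L$ itself) to appear in (b). I do not anticipate any deeper difficulty.
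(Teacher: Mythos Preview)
Your proof is correct. The paper itself does not supply a proof of this proposition; it states the result as easy and refers to \cite{ZF-auto}. Your explicit bijections for (c), (d), and (b) are exactly the natural ones, and your connectedness argument for (a) is the standard one. The observation that the second connection set becomes the conjugate $L^{g}$ in (b) is accurate and worth recording, even though the paper's statement only asserts that \emph{some} bi-Cayley presentation has $1\in S$ and does not claim $R$ and $L$ are preserved.
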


We will say the triple $(R, L, S)$ of subsets of $H$ is a {\em bi-Cayley triple\/} if $R=R^{-1},  L=L^{-1}$ and $1\in S,$
and we will say that the two bi-Cayley triples $(R, L, S)$ and $(R', L', S')$ for the same group $H$ 
are {\em equivalent}, and write $(R, L, S)\equiv(R', L', S')$, if either $(R', L', S')=(L, R, S^{-1}),$ or $(R', L', S')=(R, L, S)^\a$
for some automorphism $\a$ of $H$. Note that by parts (c) and (d) of Proposition~\ref{n}, the bi-Cayley graphs for any two equivalent bi-Cayley triples are isomorphic.

\vskip2mm
Now we consider the automorphisms of the bi-Cayley graph $\G= \BiCay(H, R, L, S)$.
Recall that $H$ acts as a semi-regular group of automorphisms of $\G$ by right multiplication,
with  $H_0$ and $H_1$ as its orbits on vertices.
Indeed each $g\in H$ induces an automorphism $R(g)$ of $\G$ given by
$\,h_{i}^{R(g)}=(hg)_{i}\,$ for $ i \in \{0,1\}$ and $h \in H$,
and then $R(H)=\{R(g)\ |\ g\in H\}\leq\Aut(\G)$.
Next, for any automorphism $\a$ of $H$ and any elements $x,y,g\in H$,
we may define two permutations $\d_{\a,x,y}$ and $\s_{\a,g}$ on $V(\G)=H_0\cup H_1$ as follows:
\begin{equation}\label{2}
\begin{array}{ll}
\d_{\a,x,y}:& h_0\mapsto (xh^{\a})_1 \ \hbox{ and } \  h_1\mapsto (yh^{\a})_0, \ \hbox{ for each } h\in H,\\[+2pt]
{}\hskip 6pt \s_{\a,g}:& h_0\mapsto (h^\a)_0 \hskip 6pt   \ \hbox{ and } \ h_1\mapsto (gh^{\a})_1, \ \hbox{ for each } h\in H,\\
\end{array}
\end{equation}
and then define \begin{equation}\label{3}
\begin{array}{ll}
{\rm I} \, = \, \{\d_{\a,x,y}\ |\ R^\a=x^{-1}Lx, \ L^\a=y^{-1}Ry \, \hbox{ and } \, S^\a=y^{-1}S^{-1}x\},\\[+4pt]
\hbox{and } \ {\rm F} \, = \, \{ \s_{\a,g}\ |\ R^\a=R, \ L^\a=g^{-1}Lg \, \hbox{ and } \, S^\a=g^{-1}S\}.
\end{array}
\end{equation}

With the above notation, the proposition below is easy to prove.

\begin{xca}{\rm\cite[Theorem~1.1]{ZF-auto}}\label{normaliser} \
Let $\,\G=\BiCay(H,R,L,S)$ be a connected bi-Cayley graph over the group $H$.
Then $N_{\Aut(\G)}(R(H))=R(H)\rtimes {\rm F}$ if $\,{\rm I}=\emptyset,$ and
$N_{\Aut(\G)}(R(H))=R(H)\langle {\rm F},\delta_{\a,x,y}\rangle$ if
$\,{\rm I}\neq\emptyset$ and $\delta_{\a,x,y}\in {\rm I}$.
Moreover, for every $\delta_{\a,x,y}\in {\rm I}$ the following hold$\,:$
\begin{enumerate}
\item[{\rm (a)}] $\langle R(H),\delta_{\a,x,y}\rangle$ acts transitively on $V(\G),$ and \\[-10pt]
\item[{\rm (b)}] if $\a$ has order $2$ and $x=y=1$, then $\,\G$ is isomorphic to the Cayley graph $\Cay(H\rtimes \langle\a\rangle, R\cup \alpha S)$.
\end{enumerate}
\end{xca}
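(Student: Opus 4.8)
The plan is to determine $N:=N_{\Aut(\G)}(R(H))$ by exploiting that $H_0$ and $H_1$ are precisely the two orbits of the semiregular group $R(H)$ on $V(\G)$. Any $g\in N$ normalises $R(H)$ and hence permutes its orbits, so $N$ has a subgroup $N^{+}$ of index at most $2$ consisting of those automorphisms that fix each of $H_0,H_1$ setwise, while the coset $N\setminus N^{+}$ (when non-empty) consists of those that interchange $H_0$ and $H_1$. I would then identify $N^{+}$ with $R(H)\rtimes{\rm F}$, and, when a swapping automorphism exists, identify the swapping coset with $(R(H)\rtimes{\rm F})\,\d_{\a,x,y}$ for any $\d_{\a,x,y}\in{\rm I}$; putting these together gives the two displayed formulas for $N$.

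The technical heart is a normal-form argument. For $g\in N$, conjugation by $g$ induces an automorphism of $R(H)$, so there is $\a\in\Aut(H)$ with $gR(h)g^{-1}=R(h^{\a})$ for all $h\in H$; since $R(H)$ is regular on each of $H_0,H_1$, this relation determines $g$ completely once the image of one vertex in each orbit is fixed. If $g\in N^{+}$, then after composing with a suitable element of $R(H)$ we may assume $g$ fixes $1_0$; the relation then forces $g$ to act on $H_0$ as $h_0\mapsto(h^{\a})_0$ and, writing $1_1^{g}=\g_1$, on $H_1$ as $h_1\mapsto(\g h^{\a})_1$, so $g=\s_{\a,\g}$ as a permutation of $V(\G)$. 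Imposing that $g$ carry each of the three edge-types $\{h_0,(rh)_0\}$, $\{h_1,(\ell h)_1\}$, $\{h_0,(zh)_1\}$ to an edge of the same type — and comparing cardinalities to turn the resulting inclusions into equalities — is a short computation that recovers exactly the conditions $R^{\a}=R$, $L^{\a}=\g^{-1}L\g$, $S^{\a}=\g^{-1}S$ defining ${\rm F}$. Running the same argument for a $g$ that swaps the orbits, now recording $1_0^{g}=x_1$ and $1_1^{g}=y_0$ so that $g=\d_{\a,x,y}$, yields exactly the conditions $R^{\a}=x^{-1}Lx$, $L^{\a}=y^{-1}Ry$, $S^{\a}=y^{-1}S^{-1}x$ defining ${\rm I}$. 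The reverse inclusions are routine direct checks from \eqref{2} and \eqref{3}: every element of ${\rm F}$ and every element of ${\rm I}$ preserves the three edge-types and conjugates $R(H)$ into itself, so ${\rm F}\le N^{+}$ and ${\rm I}\subseteq N\setminus N^{+}$. Since only the identity of $R(H)$ fixes $1_0$ we get $R(H)\cap{\rm F}=1$, hence $N^{+}=R(H)\rtimes{\rm F}$; and if ${\rm I}\neq\emptyset$ then $N\setminus N^{+}$ is the single coset $N^{+}\d_{\a,x,y}$, so $N=R(H)\langle{\rm F},\d_{\a,x,y}\rangle$.

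For the ``moreover'' clauses, part (a) is immediate: $R(H)$ is transitive on $H_0$ while $\d_{\a,x,y}$ maps $H_0$ onto $H_1$, so $\langle R(H),\d_{\a,x,y}\rangle$ is transitive on $V(\G)$. For (b), with $\a^{2}=1$ and $x=y=1$ one checks that $\d:=\d_{\a,1,1}$ has order $2$ and satisfies $\d R(h)\d^{-1}=R(h^{\a})$, so $G:=\langle R(H),\d\rangle\cong H\rtimes\langle\a\rangle$; it has order $2|H|=|V(\G)|$ and is transitive by (a), hence regular on $V(\G)$, so $\G$ is a Cayley graph for $G$. Identifying $1_0$ with the identity of $G$, the neighbours of $1_0$ are the vertices $r_0$ ($r\in R$) and $z_1$ ($z\in S$), and the elements of $G$ carrying $1_0$ to these are $R(r)$ and $\d R(z)$, which correspond in $H\rtimes\langle\a\rangle$ to $r$ and $\a z$ respectively; hence $\G\cong\Cay(H\rtimes\langle\a\rangle,\,R\cup\a S)$.

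I expect the main obstacle to be the bookkeeping in the edge-preservation step: keeping the left/right placement of group elements and the inverses straight across all three edge-types, especially in deriving $S^{\a}=y^{-1}S^{-1}x$ for ${\rm I}$ (where, unlike $R$ and $L$, the set $S$ is not assumed symmetric), and, in part (b), in confirming that the connection set is precisely $R\cup\a S$ rather than some twisted variant — which forces care about the composition convention used for the regular action of $G$.
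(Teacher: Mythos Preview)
Your proposal is correct and follows the natural approach. Note, however, that the paper does not actually prove this proposition: it is quoted from \cite[Theorem~1.1]{ZF-auto} with only the remark that it ``is easy to prove,'' so there is no in-paper proof to compare against. Your argument---splitting $N_{\Aut(\G)}(R(H))$ according to whether it preserves or swaps the two $R(H)$-orbits, deriving the normal form $\s_{\a,g}$ or $\d_{\a,x,y}$ from the conjugation relation and the regularity of $R(H)$ on each orbit, and reading off the defining conditions for ${\rm F}$ and ${\rm I}$ from edge-preservation---is exactly the standard route and is presumably what appears in \cite{ZF-auto}.
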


\section{General theory on normal edge-transitive Cayley graphs}
\label{sec:general}

\subsection{General properties}
We begin this section with the following lemma, which shows that every normal edge-transitive bi-Cayley graph is bipartite.

\begin{lemma}\label{is-bi-part}
Let $\,\G=\BiCay(H,R,L,S)$ be a connected normal edge-transitive bi-Cayley graph over the group $H$. Then $R=L=\emptyset$, and hence $\,\G$ is bipartite, with the two orbits of $R(H)$ on $V(\G)$ as its parts.
\end{lemma}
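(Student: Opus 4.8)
The plan is to exploit the bipartite-like structure forced by normal edge-transitivity together with the explicit description of $N_{\Aut(\G)}(R(H))$ in Proposition~\ref{normaliser}. First I would observe that $N = N_{\Aut(\G)}(R(H))$ preserves the partition $\{H_0, H_1\}$ of $V(\G)$ whenever $\mathrm{I} = \emptyset$ (since then $N = R(H)\rtimes \mathrm{F}$ and every element of $\mathrm{F}$ fixes $H_0$ and $H_1$ setwise), whereas when $\mathrm{I} \ne \emptyset$ the elements $\delta_{\a,x,y}$ swap $H_0$ and $H_1$. In the first case, $N$ has at least two orbits on vertices, but could still be edge-transitive only if every edge joins $H_0$ to $H_1$; since the edges inside $H_0$ are exactly those coming from $R$ (and inside $H_1$ those from $L$), edge-transitivity of $N$ forces $R = L = \emptyset$, and then $\G$ is bipartite with parts $H_0, H_1$ as claimed. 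So the real work is the case $\mathrm{I} \ne \emptyset$.

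Next, suppose $\mathrm{I} \ne \emptyset$, so that some $\delta_{\a,x,y} \in \mathrm{I}$, meaning $R^\a = x^{-1}Lx$, $L^\a = y^{-1}Ry$ and $S^\a = y^{-1}S^{-1}x$. The key point is a counting/structural argument about the induced subgraphs $\G[H_0]$ and $\G[H_1]$: the map $\delta_{\a,x,y}$ is a graph automorphism carrying $H_0$ onto $H_1$, so it maps $\G[H_0]$ isomorphically onto $\G[H_1]$; in particular $|R| = |L|$ (already assumed) and the two induced subgraphs are isomorphic Cayley graphs $\Cay(H,R)$ and $\Cay(H,L)$. Now I would look at the local picture at the vertex $1_0$: its neighbours inside $H_0$ are $\{r_0 : r \in R\}$ and its neighbours in $H_1$ are $\{s_1 : s \in S\}$, so $\deg(1_0) = |R| + |S|$; similarly $\deg(1_1) = |L| + |S| = |R| + |S|$, consistent so far. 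The obstruction to $R \ne \emptyset$ should come from edge-transitivity forcing a stabiliser to act transitively on all edges through $1_0$, mixing the `horizontal' edges (inside a part, coming from $R$ or $L$) with the `diagonal' edges (between parts, coming from $S$) in a way that is incompatible with the combinatorics unless $R = \emptyset$.

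Here is the mechanism I expect to drive that last step. Consider the subgroup $M = \langle R(H), \delta_{\a,x,y}\rangle \le N$, which by Proposition~\ref{normaliser}(a) is transitive on $V(\G)$, and note $R(H)$ has index $2$ in $M$, with the two $R(H)$-orbits $H_0, H_1$ being blocks for $M$. Since $N$ is edge-transitive and $\G$ is connected of valency $|R| + |S| \ge 1$, pick any edge $e$; its $N$-orbit is all of $E(\G)$. But an edge inside $H_0$ (if $R \ne \emptyset$) lies in a single block $H_0$, while a diagonal edge has its two endpoints in different blocks $H_0, H_1$; since $M \trianglelefteq N$ of index at most $2$, the block system $\{H_0,H_1\}$ is either preserved or the two blocks are swapped by every element of $N$, so $N$ cannot carry an edge contained in one block to an edge straddling two blocks — contradiction unless there are no horizontal edges, i.e. $R = \emptyset$ (and then $L = R^\a$-type relations give $L = \emptyset$ too, or directly $L = \emptyset$ since $|L| = |R|$). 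Once $R = L = \emptyset$, bipartiteness with parts $H_0, H_1$ is immediate.

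The main obstacle is making the block-system argument airtight: I must verify that $\{H_0, H_1\}$ really is a block system for the \emph{whole} normaliser $N$ (not just for $M$), which follows because $R(H) \trianglelefteq N$ and $H_0, H_1$ are its vertex-orbits, so $N$ permutes these orbits; and then I must confirm that "no edge of a connected edge-transitive bipartite-block graph can lie inside a block" — equivalently, that if one edge is intra-block then \emph{all} edges are, contradicting connectivity (a connected graph with all edges inside the two blocks would be disconnected). That final contradiction with connectivity is what kills $R \neq \emptyset$, and assembling these observations cleanly is the only delicate part; everything else is bookkeeping with the definitions in \eqref{2} and \eqref{3}.
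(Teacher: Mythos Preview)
Your proposal is correct and uses essentially the same idea as the paper: since $R(H)\trianglelefteq N$, the two $R(H)$-orbits $H_0,H_1$ form a block system for $N$, so no element of $N$ can carry an intra-block edge to an inter-block edge, and edge-transitivity together with connectivity (which forces $S\ne\emptyset$) then kills $R$ and $L$. The paper's write-up is simply more direct---it picks one element $g\in N$ sending $\{1_0,1_1\}$ to $\{1_0,r_0\}$ and reads off $H_0^g=H_0=H_1^g$---whereas your case split on $\mathrm{I}=\emptyset$ versus $\mathrm{I}\ne\emptyset$ and the detour through $M=\langle R(H),\delta_{\alpha,x,y}\rangle$ (whose claimed normality and index in $N$ are neither true in general nor needed) are unnecessary; your own final paragraph already identifies the clean argument that makes all of that superfluous.
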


\begin{proof}
Let $X=N_{\Aut(\G)}(R(H))$, which is edge-transitive on $\G$. Now suppose $R\neq \emptyset$.  Then $\{1_0, r_0\}$ is an edge of $\G$, for some $r\in R$, and by edge-transitivity of $X$ on $\G$, we know that some element $g\in X$ takes $\{1_0, 1_1\}$ to $\{1_0, r_0\}$.  Also $H_0$ is an orbit of $R(H)$, and $R(H)$ is normal in $X$, and therefore $H_0$ is a block of imprimitivity for $X$ on $V(\G)$.  But then since $1_0^{\, g}\in\{1_0, r_0\}$ we have $H_0^{\, g}=H_0$, while on the other hand,
since $1_1^{\, g}\in\{1_0, r_0\}$ we have $H_1^{\, g}=H_0$, and therefore $H_0 = H_1$, contradiction. Thus $R=\emptyset$.
Similarly, $L=\emptyset$, and the rest follows easily.
\end{proof}

The next lemma shows that a normal bi-Cayley graph cannot be $3$-arc-transitive.

\begin{lemma}\label{no-3-arc-tran}
Let $\,\G=\BiCay(H,\emptyset,\emptyset,S)$ be a connected bi-Cayley graph, 
and let $X=N_{\Aut(\G)}(R(H))$. Then $X_{1_01_1}=\lg \, \s_{\a, 1}\ |\ \a\in\Aut(H, S\setminus\{1\})\,\rg$,
and hence $X$ does not act transitively on the $3$-arcs of $\,\G$.
\end{lemma}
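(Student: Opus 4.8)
The plan is to compute the stabiliser $X_{1_01_1}$ of the arc $(1_0,1_1)$ inside $X = N_{\Aut(\G)}(R(H))$ explicitly, using the description of $X$ from Proposition~\ref{normaliser}, and then show this stabiliser is far too small to be transitive on the $2$-arcs extending $(1_0,1_1)$, let alone the $3$-arcs. Since $R = L = \emptyset$ here, Proposition~\ref{normaliser} tells us that $X = R(H)\rtimes {\rm F}$ when ${\rm I} = \emptyset$ and $X = R(H)\langle {\rm F}, \delta_{\a,x,y}\rangle$ otherwise, where (with $R = L = \emptyset$) the set ${\rm F}$ consists of the permutations $\s_{\a,g}$ with $S^\a = g^{-1}S$, and ${\rm I}$ consists of the $\delta_{\a,x,y}$ with $S^\a = y^{-1}S^{-1}x$. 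Note that $R(H)$ acts semiregularly, so no nontrivial element of $R(H)$ fixes $1_0$; hence an element of $X$ fixing $1_0$ lies in the complement $\langle {\rm F}, \delta_{\a,x,y}\rangle$, and moreover $\delta_{\a,x,y}$ swaps the two parts $H_0, H_1$ while elements of ${\rm F}$ preserve them. So any element of $X_{1_0 1_1}$ must be a product lying in the coset structure that preserves $H_0$, i.e. effectively an element $r\sigma_{\a,g}$ with $r\in R(H)$ and $\sigma_{\a,g}\in {\rm F}$.

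First I would nail down $X_{1_0}$: from $1_0^{r\sigma_{\a,g}} = 1_0$ and the formula $\s_{\a,g}\colon h_0\mapsto (h^\a)_0$ in \eqref{2}, together with $R(H)$ acting by right multiplication, one forces the $R(H)$-part to be trivial, so $X_{1_0} = {\rm F}$ itself (all $\s_{\a,g}$ fix $1_0$). Then I would impose the second condition $1_1^{\s_{\a,g}} = 1_1$; since $1_1^{\s_{\a,g}} = (g\cdot 1^\a)_1 = g_1$, this forces $g = 1$, so $X_{1_01_1} = \{\,\s_{\a,1} : \a\in\Aut(H),\ S^\a = S\,\}$. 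Finally, since $1\in S$ always and automorphisms fix $1$, the condition $S^\a = S$ is equivalent to $(S\setminus\{1\})^\a = S\setminus\{1\}$, i.e. $\a\in\Aut(H, S\setminus\{1\})$. This gives exactly $X_{1_01_1} = \langle\,\s_{\a,1} \mid \a\in\Aut(H, S\setminus\{1\})\,\rangle$ as claimed (the group structure follows since $\s_{\a,1}\s_{\b,1} = \s_{\a\b,1}$).

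With this in hand, the failure of $3$-arc-transitivity is a counting/fixed-point argument: a $3$-arc-transitive action would in particular be transitive on the $2$-arcs starting $(1_0, 1_1)$, so $X_{1_01_1}$ would have to act transitively on $\G(1_1)\setminus\{1_0\}$, which is the set $\{(s)_0 : s\in S\setminus\{1\}\}$. But every $\s_{\a,1}$ fixes $1_0$ \emph{and} fixes $1_1$ \emph{and}, crucially, maps $(s)_0\mapsto (s^\a)_0$ — in particular it fixes $1_0\in\G(1_1)$, so on the neighbour set $\G(1_1)$ the stabiliser $X_{1_01_1}$ already fixes the vertex $1_0$; for transitivity on the remaining $2$-arcs we would need transitivity on $S\setminus\{1\}$ via $\Aut(H,S\setminus\{1\})$, but even granting that, one more step kills $3$-arc-transitivity. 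Consider a $3$-arc $(1_0, 1_1, s_0, (s's)_1)$ with $s, s's\in S$; any automorphism of $\G$ in $X$ fixing the initial $2$-arc $(1_0,1_1,s_0)$ must fix $s_0$, and an element $\s_{\a,1}\in X_{1_01_1}$ fixes $s_0$ iff $s^\a = s$, i.e. $\a$ lies in the stabiliser in $\Aut(H,S\setminus\{1\})$ of $s$; such an $\a$ then acts on the neighbours of $s_0$ other than $1_1$, which correspond to $\{(s'')_1 : s''\in S\setminus\{s\}\}$ in the appropriate translate, and for $3$-arc-transitivity we would need this to be transitive, forcing $\Aut(H, S\setminus\{1\})_s$ to act transitively on a set of size $|S|-1\ge 2$ in a way that is incompatible with the same group fixing the element $s$ — concretely, the element of $S\setminus\{s\}$ playing the role analogous to the ``previous'' vertex cannot be moved to another. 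I expect the cleanest way to finish is: restrict attention to the $3$-arc $(1_0,1_1,s_0,1_1)$ — wait, that is not a $3$-arc. Rather, pick $s_1, s_2\in S\setminus\{1\}$ distinct (valency $\ge 3$ always since the graph is connected and not a path here — and if the valency is $\le 2$ the graph is a cycle and the statement is trivial), and compare the $3$-arcs $(1_0,1_1,(s_1)_0,(s_1s_2)_1)$ versus one where the last vertex is forced to be $(s_1)_1 = $ the start, which is excluded; the point is that $X_{(1_0,1_1,(s_1)_0)}$ fixes $1_1$ among the neighbours of $(s_1)_0$ and so cannot send $(s_1s_2)_1$ to $1_1$, whereas a $3$-arc with last vertex an arbitrary neighbour of $(s_1)_0$ \emph{other than} $1_1$ and $(s_1)_0$'s previous vertex would need to be reachable — and if $|S\setminus\{1\}| \ge 2$ there are at least two such targets while $1_1$ is a third neighbour that is provably fixed, contradiction with transitivity on all such $3$-arcs. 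The main obstacle is organising this last fixed-point argument so that it applies uniformly regardless of the sizes of $S$ and $\Aut(H,S\setminus\{1\})$; the key leverage point is simply that \emph{every} element of $X_{1_01_1}$ fixes the vertex $1_0$, and iterating, every element of $X$ fixing a $2$-arc $(u,v,w)$ fixes the unique neighbour of $w$ lying ``back towards $v$'' — so no such element can realise the permutations of $\G_1(w)$ needed for $3$-arc-transitivity once the valency exceeds $2$.
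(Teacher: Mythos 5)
Your computation of $X_{1_01_1}$ is correct and follows the same route as the paper: $X_{1_0}={\rm F}$, the condition $1_1^{\s_{\a,g}}=g_1=1_1$ forces $g=1$, and since $1^\a=1$ the condition $S^\a=S$ is the same as $\a\in\Aut(H,S\setminus\{1\})$. (Do fix one indexing slip: since $h_0\sim (zh)_1$, the neighbourhood of $1_1$ is $\{(z^{-1})_0 : z\in S\}$, not $\{z_0 : z\in S\setminus\{1\}\}\cup\{1_0\}$; this matters because $S$ is not assumed inverse-closed, so for instance the $2$-arc $(1_0,1_1,s_0)$ you work with need not exist.)

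The second half, however, has a genuine gap. Your explicit concluding argument is that the stabiliser of the $2$-arc $(1_0,1_1,(s_1)_0)$ fixes $1_1$ among the neighbours of $(s_1)_0$ ``and so cannot send $(s_1s_2)_1$ to $1_1$,'' and your closing ``leverage point'' is that any element fixing a $2$-arc $(u,v,w)$ fixes the neighbour of $w$ lying back towards $v$. But that neighbour is just $v$ itself, and $v_3=v$ is excluded by the definition of a $3$-arc ($v_3\neq v_1$): the set on which the $2$-arc stabiliser must act transitively is $\G(w)\setminus\{v\}$, and knowing that $v$ is fixed gives no contradiction at all -- it is perfectly compatible with transitivity on the admissible extensions. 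What is needed, and what the paper supplies, is a fixed vertex \emph{inside} the admissible extension set. The paper takes the $2$-arc $(s_1,1_0,1_1)$ for $s\in S\setminus\{1\}$: any $\s_{\a,1}$ fixing it must satisfy $s^\a=s$, hence it fixes $(s^{-1})_0$, which genuinely lies in $\G(1_1)\setminus\{1_0\}$, a set of size $|S|-1\ge 2$; transitivity fails. Equivalently, with your orientation one may take the $2$-arc $(1_0,1_1,(s^{-1})_0)$: its stabiliser again forces $s^\a=s$, and then the admissible extension $(s^{-1})_1$ (the neighbour of $(s^{-1})_0$ indexed by $1\in S$) is fixed, because $1^\a=1$. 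You gesture at this in the middle of your paragraph (``the element of $S\setminus\{s\}$ playing the role analogous to the previous vertex cannot be moved''), but you never identify the fixed admissible extension, and the assertion that transitivity of $\Aut(H,S\setminus\{1\})_s$ on a set of size $|S|-1$ is ``incompatible with fixing $s$'' is not a reason by itself; the actual reason is that every automorphism fixes $1\in S$, which pins down a fixed point among the extensions. Finally, a shared caveat: both your argument and the paper's implicitly need $|S|\ge 3$; for $|S|=2$ the graph is a cycle and the conclusion is not ``trivial'' there (indeed in that degenerate case $X=\Aut(\G)$ is transitive on $3$-arcs), so if you address it, address it as an excluded case rather than an easy one.
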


\begin{proof}
By Proposition~\ref{normaliser}, we know that $X_{1_0}=\lg \, \s_{\a, s}\ |\ \a\in\Aut(H), \, s\in S, \, S^\a=s^{-1}S\rg$,
and hence that $X_{1_01_1}=\lg\,\s_{\a, 1}\ |\ \a\in\Aut(H), \, S^\a=S \, \rg=\lg\, \s_{\a, 1}\ |\ \a\in\Aut(H, S\setminus\{1\})\,\rg$.
Next, if $X$ acts transitively on the $3$-arcs of $\G$, then $X_{1_11_0s_1}$ acts transitively on $\G(1_1)\setminus\{1_0\}$, for some $s\in S\setminus\{1\}$.
But for any $\s_{\a, 1}\in X_{1_11_0s_1}$, we have $s_1=s_1^{\s_{\a,1}}=1\cdot (s^\a)_1=(s^\a)_1$, and so $s^\a=s$,
and then $(s^{-1})_0^{\s_{\a, 1}}=((s^{-1})^\a)_0=(s^{-1})_0$, and this contradicts the transitivity of $X_{1_11_0s_1}$ on $\G(1_1)\setminus\{1_0\}$.
\end{proof}

\subsection{Normal locally arc-transitive bi-Cayley graphs}
The following proposition gives a characterisation of normal locally arc-transitive bi-Cayley graphs.

\begin{xca}\label{normal-arc-tran}
Let $\,\G=\BiCay(H,\emptyset,\emptyset,S)$ be a connected bi-Cayley graph, 
and let $X=N_{\Aut(\G)}(R(H))$.
Then $\,\G$ is normal locally arc-transitive if and only if $\,\G(1_0)=\{s_1 : s \in S\}$ is an orbit
of the subgroup ${\rm F} = \{ \s_{\a,g}\ |\ S^\a=g^{-1}S\}$.
Moreover, if $\,\G$ is normal locally arc-transitive, then
\begin{enumerate}
\item[{\rm (a)}]\ $X$ acts transitively on the arcs of $\,\G$ if and only if there exists $\a\in\Aut(H)$ such that $S^\a=S^{-1}$;

\item[{\rm (b)}]\ $X$ acts semisymmetrically on $\,\G$ if and only if there exists no $\a\in\Aut(H)$ such that $S^\a=S^{-1}$.
\end{enumerate}
\end{xca}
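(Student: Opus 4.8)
\textbf{Proof proposal for Proposition~\ref{normal-arc-tran}.}

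The plan is to work directly with the description of the normaliser $X=N_{\Aut(\G)}(R(H))$ given in Proposition~\ref{normaliser}. Since $R=L=\emptyset$, we have ${\rm I}=\{\d_{\a,x,y} : S^\a=y^{-1}S^{-1}x\}$ and ${\rm F}=\{\s_{\a,g} : S^\a=g^{-1}S\}$, and $X=R(H)\rtimes {\rm F}$ if ${\rm I}=\emptyset$, or $X=R(H)\langle {\rm F},\d_{\a,x,y}\rangle$ for any fixed $\d_{\a,x,y}\in{\rm I}$ otherwise. First I would compute the vertex-stabiliser $X_{1_0}$: since $R(H)$ acts regularly on $H_0$ and the elements of ${\rm F}$ all fix $1_0$ (by the definition $\s_{\a,g}: h_0\mapsto (h^\a)_0$), while no $\d_{\a,x,y}$ fixes $1_0$ (it sends $h_0$ into $H_1$), one gets $X_{1_0}={\rm F}$ when ${\rm I}=\emptyset$; and when ${\rm I}\ne\emptyset$ a short index/order count via Proposition~\ref{normaliser}(a) (which says $\langle R(H),\d_{\a,x,y}\rangle$ is transitive on $V(\G)$, so $[X:R(H)]=2|{\rm F}|$ and $[X:X_{1_0}]=|V(\G)|=2|H|$, forcing $|X_{1_0}|=|{\rm F}|$) again gives $X_{1_0}={\rm F}$. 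Hence in all cases the stabiliser of $1_0$ acting on $\G(1_0)=\{s_1:s\in S\}$ is precisely ${\rm F}$, and so $\G$ is normal locally arc-transitive (i.e.\ $X_{1_0}$ is transitive on $\G(1_0)$) if and only if ${\rm F}$ is transitive on $\{s_1:s\in S\}$; this is the main equivalence.

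For the ``moreover'' part, assume $\G$ is normal locally arc-transitive. For (a): if some $\a\in\Aut(H)$ satisfies $S^\a=S^{-1}$, then choosing $x=y=1$ in the definition of ${\rm I}$ shows $\d_{\a,1,1}\in{\rm I}$, and by Proposition~\ref{normaliser}(a) together with local arc-transitivity at $1_0$, $X$ is transitive on both vertices and arcs out of $1_0$, hence arc-transitive. Conversely, if $X$ is arc-transitive then it is in particular vertex-transitive, so some element swaps $H_0$ and $H_1$; by the structure of $X$ this must lie in the coset $R(H)\d_{\a,x,y}$ for some $\d_{\a,x,y}\in{\rm I}$, so ${\rm I}\ne\emptyset$, and from $S^\a=y^{-1}S^{-1}x$ with $1\in S$ one can adjust by an element of ${\rm F}$ (which acts transitively on $S$ since $\G$ is normal locally arc-transitive) to arrange $y=1$, and then conjugating by a suitable $R(g)$ to clear $x$, obtaining an automorphism $\a'$ with $S^{\a'}=S^{-1}$. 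Part (b) is then immediate, since $X$ is vertex-transitive if and only if it is arc-transitive here: any vertex-transitive $X$ must contain an element interchanging $H_0$ and $H_1$, which (by the above) forces the existence of $\a$ with $S^\a=S^{-1}$ and hence arc-transitivity; so ``semisymmetric'' ($X$ edge- but not vertex-transitive; note $X$ is automatically edge-transitive once it is locally arc-transitive, being transitive on $H_0$ and on $\G(1_0)$) is equivalent to the non-existence of such an $\a$.

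The step I expect to be the main obstacle is the converse direction of (a): showing that a vertex- or arc-transitive $X$ forces the \emph{existence} of an automorphism $\a$ of $H$ with $S^\a=S^{-1}$ on the nose, rather than merely $S^\a=y^{-1}S^{-1}x$ for some $x,y$. This requires carefully normalising a given $\d_{\a,x,y}\in{\rm I}$ by conjugating with elements of $R(H)$ and composing with elements of ${\rm F}$, and checking that the local arc-transitivity hypothesis (transitivity of ${\rm F}$ on $S$) is exactly what is needed to absorb the factors $x$ and $y$. Everything else is bookkeeping with the formulas in~(\ref{2}) and~(\ref{3}) and the index computation above.
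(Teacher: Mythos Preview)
Your argument is correct, and the treatment of $X_{1_0}={\rm F}$ and of part~(b) matches the paper. Where you diverge is in the converse of~(a), and there you work harder than necessary.

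The paper uses arc-transitivity directly rather than dropping to vertex-transitivity: since $X$ is arc-transitive, some $\tau\in X$ sends the arc $(1_0,1_1)$ to $(1_1,1_0)$. Because $\tau$ normalises $R(H)$ and interchanges $H_0$ with $H_1$, it is automatically of the form $\d_{\a,x,y}$ for some $\a\in\Aut(H)$ and $x,y\in H$ (this is the content behind the description of ${\rm I}$ in Proposition~\ref{normaliser}); then $1_1=1_0^{\tau}=x_1$ and $1_0=1_1^{\tau}=y_0$ force $x=y=1$, so $\d_{\a,1,1}\in{\rm I}$ and $S^\a=S^{-1}$. No normalisation is needed, and the local arc-transitivity hypothesis is not invoked in this direction at all.

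Your normalisation route does go through, but the sketch is imprecise. You should \emph{multiply} (not conjugate), and the natural order is reversed: first left-multiply $\d_{\a,x,y}$ by $R((y^{-1})^{\a^{-1}})$ to obtain $\d_{\a'',\,xy^{-1},\,1}\in{\rm I}$ (with $\a''$ the composite of an inner automorphism and $\a$), noting that $1\in S$ then forces $z:=xy^{-1}\in S$; second, right-multiply by some $\s_{\b,g}\in{\rm F}$ with $g=(z^\b)^{-1}$, whose existence for $z\in S$ is exactly what local arc-transitivity guarantees, to obtain $\d_{\a''\b,1,1}\in{\rm I}$. So the ``main obstacle'' you flagged is real for your route, but the paper sidesteps it entirely by exploiting the arc-reversing element.
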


\begin{proof}
The first assertion is obvious,  so now suppose that $\G$ is normal locally arc-transitive.
Then all we have to do is prove that $X$ acts transitively on the arcs of $\G$ if and only if $S^\a=S^{-1}$ for some $\a\in\Aut(H)$.
If $X$ acts transitively on the arcs of $\G$, then there exists $\d_{\a, x, y}\in{\rm I}$
such that $(1_0, 1_1)^{\d_{\a,x,y}}=(1_1,1_0)$, and then $1_1=1_0^{\d_{\a,x,y}}=(x\cdot 1^\a)_1=x_1$ and $1_0=1_1^{\d_{\a,x,y}}=(y\cdot 1^\a)_0=y_0$, and it follows that $x=y=1$, so $\d_{\a,1,1}\in {\rm I}$,
and therefore $S^\a=S^{-1}$ (by (\ref{3})).
Conversely, if $S^\a=S^{-1}$ for some $\a\in\Aut(H)$, then $\d_{\a,1,1}$ takes $s_0$ to $(s^{-1})_1$,
and it follows that $X$ is vertex-transitive on $\G$, and therefore arc-transitive on $\G$.
\end{proof}

Recall that if a connected Cayley graph $\G=\Cay(G, S)$ on a group $G$ is bi-normal and arc-transitive, then $\G$ is a normal arc-transitive bi-Cayley graph over $H=\bigcap_{x\in X} R(G)^x$. Conversely, by Propositions~\ref{normal-arc-tran} and \ref{normaliser}, every arc-transitive normal bi-Cayley graph must be a Cayley graph.
On the other hand, an arc-transitive normal bi-Cayley graph is not necessarily a bi-normal Cayley graph.
\medskip\smallskip

We can now prove our first main theorem. \medskip

\f{\bf Proof of Theorem~\ref{normal-2-arc}}.
For necessity, suppose that $X =N_{\Aut(\G)}(R(H))$ acts transitively on the $2$-arcs of $\G$.
Then $X$ acts acts transitively on the arcs of $\G$, and so (a) holds, by Proposition \ref{normal-arc-tran}.
Also $X_{1_01_1}$ acts transitively on $\G(1_0)\setminus\{1_1\}$, and since  $X_{1_01_1}=\lg\, \s_{a,1}\ |\ \a\in\Aut(H, S)\rg$
by Lemma~\ref{no-3-arc-tran}, it follows that for all $s, t\in S\setminus\{1\}$, there exists $\s_{\a,1}\in X_{1_01_1}$ such that $t_1=s_1^{\s_{\a,1}}=(1\cdot s^\a)_1=(s^\a)_1$, and so $t=s^\a$. Thus $\Aut(H, S\setminus\{1\})$ is transitive on $S\setminus\{1\}$, proving (b).  Furthermore, for any $s\in  S\setminus\{1\}$ there exists $\s_{\b, s}\in X_{1_0}$ such that $1_1^{\s_{\b, s}}=s_1$, and then since $\s_{\b, s}\in X_{1_0}={\rm F}$, we must have $S^\b=s^{-1}S$, so (c) holds.

For sufficiency, suppose the conditions (a) to (c) hold.
Then by (a) and Proposition~\ref{normaliser}, we find that $X$ is vertex-transitive on $\G$,
and by (b) and (c), we find that $X_{1_0}$ is $2$-transitive on $\G(1_0)$.
Thus $X$ acts transitively on the $2$-arcs of $\G$.
\hfill\qed

\subsection{Normal half-arc-transitive bi-Cayley graphs}
\begin{xca}\label{normal-half-arc}
Let $\,\G=\BiCay(H,\emptyset,\emptyset,S)$ be a connected bi-Cayley graph, 
and let $X=N_{\Aut(\G)}(R(H))$. Then $X$ acts transitively on the vertices and edges but not on the arcs of $\,\G$ if and only if
\begin{enumerate}
\item[{\rm (a)}]\ $X_{1_0}$ has exactly two orbits on $\,\G(1_0)$ of equal size, say $O_1=1_1^{X_{1_0}}, O_2=x_1^{X_{1_0}}$,
and
\item[{\rm (b)}]\ there exists $\a\in\Aut(H)$ and $S^\a=S^{-1}x$.
\end{enumerate}
\end{xca}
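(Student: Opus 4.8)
The plan is to characterise when the normaliser $X = N_{\Aut(\G)}(R(H))$ is vertex- and edge-transitive but not arc-transitive on $\G$, using the structure of $X$ from Proposition~\ref{normaliser}. First I would observe that, since $\G$ is connected and bipartite with parts $H_0, H_1$ (the two orbits of $R(H)$), the group $X$ is vertex-transitive if and only if $X$ interchanges $H_0$ and $H_1$, which by Proposition~\ref{normaliser}(a) happens precisely when ${\rm I}\neq\emptyset$, i.e. some $\d_{\a,x,y}\in {\rm I}$ exists. Conjugating such a $\d_{\a,x,y}$ by a suitable element of $R(H)$ and using part~(c) of Proposition~\ref{n} (the isomorphism $\BiCay(H,\emptyset,\emptyset,S)\cong\BiCay(H,\emptyset,\emptyset,S^{\g})$), I would reduce to the case where the switch $\d$ fixes $1_0$; this forces $\d = \d_{\a,1,s}$ (or $\d_{\a,s,1}$) for some $s$, and comparing with the defining relations in~(\ref{3}) with $R=L=\emptyset$ gives $S^\a = S^{-1}x$ for an appropriate $x \in S$ — which is condition~(b). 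So vertex-transitivity of $X$ is equivalent to condition~(b).

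Next I would handle edge-transitivity together with failure of arc-transitivity. Since $X$ contains $R(H)$, which is transitive on each part $H_i$, the orbits of $X$ on the arc-set emanating from the fixed vertex $1_0$ correspond to the orbits of $X_{1_0}$ on $\G(1_0) = \{s_1 : s\in S\}$. By Proposition~\ref{normaliser}, $X_{1_0} = {\rm F} = \langle \s_{\a,g} : S^\a = g^{-1}S\rangle$. If $X$ is edge-transitive but not arc-transitive, then (because $X$ is vertex-transitive, so acts transitively on $E(\G)$ but has at least two arc-orbits, and by the standard fact that for a vertex-transitive, edge-transitive but not arc-transitive group the number of orbits on arcs is exactly two and they have equal size, each equal to $|E(\G)|$) the group $X_{1_0}$ has exactly two orbits $O_1, O_2$ on $\G(1_0)$ of equal size; taking base points $1_1$ and $x_1$ gives condition~(a). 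Conversely, I would check that conditions (a) and (b) suffice: (b) gives vertex-transitivity via Proposition~\ref{normaliser}(a); then (a) says $X_{1_0}$ has two orbits on $\G(1_0)$, so $X$ has at least two arc-orbits, hence is \emph{not} arc-transitive; and edge-transitivity follows because vertex-transitivity plus an appropriate counting argument (an edge $\{u,v\}$ with $u \in H_0$ can be moved by $R(H)$ so that $u = 1_0$, and then the two $X_{1_0}$-orbits on $\G(1_0)$ together with the switch $\d$ collapse to a single edge-orbit: the switch $\d_{\a,1,s}$ sends the edge $\{1_0,t_1\}$ to an edge at the vertex $1_1 = 1_0^{\,\d}$... — one needs to verify $\d$ maps one arc-orbit onto the reverse of the other, which follows since $\d$ inverts the bipartition and squares into $R(H)$-times-something in $X_{1_0}$).

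The main obstacle I expect is the bookkeeping in the converse direction: showing that (a) and (b) really force \emph{edge}-transitivity rather than merely giving two equal-sized arc-orbits that could in principle fuse to two edge-orbits. The key point to nail down is that the switching automorphism $\d$ coming from (b) maps the arc-orbit $O_1$ (arcs $1_0 \to s_1$) to arcs based at $1_1$, and that after translating back by $R(H)$ these become the \emph{reverses} of arcs in $O_2$ (or in $O_1$) — so that $\d$ identifies the two arc-orbits as a single \emph{edge}-orbit. Concretely one computes $\{1_0, t_1\}^{\d_{\a,1,s}} = \{(s\cdot 1)_1, (t^\a)_0\} = \{s_1, (t^\a)_0\}$, translates by $R(s^{-1}) \in R(H)$ to get $\{1_1, (t^\a s^{-1})_0\}$, and checks that as $t$ ranges over the relevant subset of $S$, the second coordinate $t^\a s^{-1}$ ranges (using $S^\a = S^{-1}x$) over a set ensuring the two orbits merge. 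I would also record, as part of the argument, that the condition ``$O_1$ and $O_2$ have equal size'' in (a) is not redundant but is exactly what a vertex- and edge-transitive but not arc-transitive action forces, and conversely is what is needed for the single edge-orbit conclusion; this symmetry between the two halves of the bipartition is the heart of the statement, and it is guaranteed by the existence of the order-normalising switch $\d$ in (b).
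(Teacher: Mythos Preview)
Your overall plan matches the paper's, but there are two real issues to fix.

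First, your claim that ``vertex-transitivity of $X$ is equivalent to condition~(b)'' is not correct as stated, and the phrase ``the switch $\d$ fixes $1_0$'' makes no sense (any $\d_{\a,x,y}$ swaps $H_0$ with $H_1$, so never fixes a vertex in $H_0$). What you can do from vertex-transitivity alone is normalise by right-multiplying by an element of $R(H)$ to obtain some $\d_{\a,x',1}\in {\rm I}$, hence $S^\a=S^{-1}x'$ for some $x'\in S$. But condition~(b) demands the specific $x$ with $x_1\in O_2$, and vertex-transitivity alone does not pin this down: you must still rule out $x'_1\in O_1$, and for that you need the failure of arc-transitivity (together with edge-transitivity). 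Indeed if $x'_1\in O_1$ then $\d_{\a,x',1}$ sends the arc $(1_1,1_0)$ to $(1_0,x'_1)$, which lies in the $X$-orbit of $(1_0,1_1)$; this puts $(1_0,1_1)$ and its reverse in one orbit and (by edge-transitivity) forces arc-transitivity. The paper avoids this detour: it first derives~(a) by orbit-counting (since edge- and arc-stabilisers coincide when $X$ is edge- but not arc-transitive, $|X_{1_0}:X_{1_0s_1}|=|\G(1_0)|/2$), and then uses edge-transitivity \emph{directly} to find $g\in X$ taking $\{1_0,1_1\}$ to $\{1_0,x_1\}$ with $x_1\in O_2$; since $1_1$ and $x_1$ lie in different $X_{1_0}$-orbits, $g$ cannot fix $1_0$, so it swaps parts and must be $\d_{\a,x,1}$, giving~(b) immediately.

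Second, in your converse the bookkeeping with $\d_{\a,1,s}$ is garbled: you write $\d_{\a,1,s}$ but then compute as if it were $\d_{\a,s,1}$, and neither is the $\d_{\a,x,1}$ that~(b) actually provides. The correct converse argument is a single line (and is what the paper does): from~(b), $\d_{\a,x,1}\in X$ sends the edge $\{1_0,1_1\}$ to $\{x_1,1_0\}=\{1_0,x_1\}$, so representatives of $O_1$ and $O_2$ already give edges in the same $X$-orbit; combined with the transitivity of $R(H)$ on $H_0$, that is edge-transitivity. No further computation with $t^\a s^{-1}$ is needed.
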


\begin{proof}
Suppose $X$ acts transitively on the vertices and edges but not on the arcs of $\G$.
Then for any $s\in S$, we have  $X_{1_0s_1}=X_{\{1_0,s_1\}}$, so $|X: X_{1_0s_1}|=|X: X_{\{1_0, s_1\}}|=|E(\G)|=|V(G)||\G(1_0)|/2$, and therefore $|X_{1_0}: X_{1_0s_1}|=|\G(1_0)|/2$.  Thus $X_{1_0}$ has exactly two orbits on $\G(1_0)$ of equal size, and (a) holds. Now choose $x$ so that $1_1$ and $x_1$ lie in different orbits of $X_{1_0}$ on $\G(1_0)$. Then there exists $\d_{\a,a,b}\in X$ such that $\{1_0, 1_1\}^{\d_{\a, a, b}}=\{1_0, x_1\}$, and then $1_0^{\d_{\a,a,b}}=x_1$ and $1_1^{\d_{\a, a, b}}=1_0$, so that $a=x$ and $b=1$. In this case $\a\in\Aut(H)$, and $S^\a=S^{-1}x$, so (b) holds.

Conversely, suppose (a) and (b) hold. Then $X$ is vertex-transitive on $\G$, by (b), but not arc-transitive on $\G$, by (a).
Next, for any edge $\{h_0, g_1\}$, we have $\{h_0, g_1\}^{R(h^{-1})}=\{1_0, (gh)_1\}$,
and by (a) it follows that $(gh)_1\in O_1$ or $O_2$. If $(gh)_1\in O_1$, then clearly $\{h_0, g_1\}$ lies in the same orbit of $X$
as $\{1_0, 1_1\}$.  On the other hand, if $(gh)_1\in O_2$, then there exists $\s\in X_{1_0}$ such that $(gh)_1^\s=x_1$,
and then by (b) it follows that $\d_{\a, x, 1}$ is an automorphism of $\G$ with $(1_0, 1_1)^{\d_{\a, x, 1}}=(x_0, 1_1)$,
and then $\{1_0, (gh)_1\}^{\s\d_{\a,x,1}}=\{1_0, 1_1\}$, so again, $\{h_0, g_1\}$ lies in the same orbit of $X$ as $\{1_0, 1_1\}$.
Thus $X$ is edge-transitive on $\G$.
\end{proof}

Here we remark that in contrast to normal arc-transitive bi-Cayley graphs, normal half-arc-transitive bi-Cayley graphs may be non-Cayley; see Section~\ref{sec:tetravalent}.

\section{Edge-transitive bi-abelian graphs}
\label{sec:ET}

In this section, we prove Proposition~\ref{bi-abelian}. We do this in two steps.

\begin{xca}\label{abelian}
Let $\,\G=\BiCay(H, R, L, S)$ be a connected bi-Cayley graph over an abelian group $H$.
Then the following hold$\,:$
\begin{enumerate}
  \item [{\rm (a)}]\ If $R=L=\emptyset$, then $R(H)\rtimes\lg\d_{\a, 1, 1}\rg$ is regular on $V(\G)$, \\
  where $\a$ is the automorphism of $H$ that maps every element of $H$ to its inverse.
  \item [{\rm (b)}]\ If $\,\G$ is edge-transitive, then $\,\G$ is vertex-transitive.
\end{enumerate}
\end{xca}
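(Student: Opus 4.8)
The plan is to prove part (a) first and then use it, together with Proposition~\ref{n}(d) and an argument involving the bi-Cayley triple, to deduce part (b). For part (a): since $H$ is abelian, the map $\a : h \mapsto h^{-1}$ is an automorphism of $H$, and $R^\a = R^{-1} = R$, $L^\a = L^{-1} = L$ (using $R = R^{-1}$, $L = L^{-1}$), and $S^\a = S^{-1}$; since $R = L = \emptyset$ the identities $R^\a = x^{-1}Lx$, $L^\a = y^{-1}Ry$ with $x = y = 1$ read $\emptyset = \emptyset$, and $S^\a = y^{-1}S^{-1}x = S^{-1}$ holds. Hence $\d_{\a,1,1} \in {\rm I}$, so $\d_{\a,1,1} \in \Aut(\G)$, and it normalises $R(H)$ by Proposition~\ref{normaliser}. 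By direct computation from (\ref{2}), $\d_{\a,1,1}$ swaps $h_0 \leftrightarrow (h^{-1})_1$, so it is an involution interchanging $H_0$ and $H_1$; together with the fact that $R(H)$ is regular on each of $H_0$ and $H_1$, this gives that $\langle R(H), \d_{\a,1,1}\rangle = R(H)\langle\d_{\a,1,1}\rangle$ is transitive on $V(\G)$, of order $2|H| = |V(\G)|$, hence regular. (The semidirect product structure follows since $\d_{\a,1,1}$ has order $2$ and conjugates $R(g)$ to $R(g^\a) = R(g^{-1})$, which is a routine check I would include but not belabour.)

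For part (b), suppose $\G$ is edge-transitive. The key reduction is to show we may assume $R = L = \emptyset$. By Proposition~\ref{n}(a), $H = \langle R \cup L \cup S\rangle$; consider the subgroup $\Aut(\G)$ acting on $E(\G)$. If $R \cup L = \emptyset$ we are done by part (a). Otherwise, I would argue as in Lemma~\ref{is-bi-part}: the two induced subgraphs $\G[H_0]$ and $\G[H_1]$ are Cayley graphs $\Cay(H,R)$ and $\Cay(H,L)$, and every edge inside $H_0$ or inside $H_1$ must, by edge-transitivity, be carried to an edge between $H_0$ and $H_1$ (or we would force an automorphism mixing the two "sides" in a way that — as in Lemma~\ref{is-bi-part} — collapses $H_0 = H_1$). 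The subtlety is that here we do \emph{not} assume normality, so I cannot immediately say $H_0$ is a block; instead I would observe that $\G$ bipartite would force $R = L = \emptyset$, and if $\G$ is not bipartite but is edge-transitive, then it is connected and all edges lie in a single orbit, so an odd cycle exists and there are edges within a part — then edge-transitivity produces a graph automorphism $g$ with $1_0^g \in H_0$ but mapping some $H_0$-internal edge to an $H_0$–$H_1$ edge, and chasing connectivity one still derives a contradiction unless no internal edges exist at all. This gives $R = L = \emptyset$, and part (a) then furnishes a vertex-transitive subgroup.

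The main obstacle, and the part I would need to be most careful about, is exactly this reduction "edge-transitive $\Rightarrow$ $R = L = \emptyset$" in the \emph{absence} of normality — the clean block argument of Lemma~\ref{is-bi-part} is not directly available. I expect the correct route is: an edge-transitive graph with an edge inside $H_0$ must have \emph{all} edges equivalent, but a path of length $2$ through a vertex of $H_1$ using an $H_0$–$H_1$ edge and then an $H_0$-internal edge, versus one using two $H_0$–$H_1$ edges, gives $2$-arcs of genuinely different "type" only if the local structure differs; so one instead counts edges or uses the bipartite double / the fact that $\G$ would have to be vertex-transitive already (which is what we want) — so really the honest statement is that one proves (b) by showing $\G$ is vertex-transitive \emph{directly} from edge-transitivity plus $|H_0| = |H_1|$ and connectivity: an edge-transitive graph in which the two vertex-orbits-under-$R(H)$ have equal size and are "swapped" by some edge is vertex-transitive. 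I would present this as: pick an edge $e$; its $\Aut(\G)$-orbit is all of $E(\G)$; since $\G$ is connected every vertex lies on some edge; the orbit of $1_0$ under $\Aut(\G)$ contains, for each edge through $1_0$, the other endpoint's orbit-representative, and iterating along a spanning tree (connectivity) shows the vertex-orbit of $1_0$ is everything \emph{provided} some edge joins the two $R(H)$-orbits — and if no edge joins them, $\G$ is disconnected, contradiction. This last version sidesteps the internal-edge issue entirely and is, I believe, the argument the authors intend.
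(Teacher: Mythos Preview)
Your treatment of part (a) is correct and essentially identical to the paper's: the inversion map $\a$ is an automorphism of the abelian group $H$, one checks $\d_{\a,1,1}\in{\rm I}$ via (\ref{3}), and the resulting involution swaps $H_0$ with $H_1$, giving a regular group of order $2|H|$.

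Part (b), however, has a genuine gap. Your final argument---that edge-transitivity, connectivity, and the existence of an edge between the two $R(H)$-orbits together force vertex-transitivity---is false in general. Semisymmetric graphs are precisely the counterexamples: they are connected, regular, edge-transitive, and every edge joins the two vertex-orbits of $\Aut(\G)$, yet they are not vertex-transitive. The mistake is in the claim that ``the orbit of $1_0$ under $\Aut(\G)$ contains, for each edge through $1_0$, the other endpoint's orbit-representative''; edge-transitivity lets you map the \emph{edge} $\{1_0,s_1\}$ to any other edge, but the induced map on endpoints need not send $1_0$ to a vertex of $H_1$. Your earlier attempts in the same paragraph (odd cycles, internal edges) also do not close the gap, since they do not rule out the possibility that $\G$ is bipartite, edge-transitive, and not vertex-transitive.

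The paper's argument for (b) is a short contradiction that you almost stumbled onto but then abandoned: suppose $\G$ is edge-transitive but not vertex-transitive. Since $\G$ is regular (valency $|R|+|S|=|L|+|S|$), it is semisymmetric, hence bipartite with the two $\Aut(\G)$-orbits as its parts. Each $R(H)$-orbit lies in a single $\Aut(\G)$-orbit, and comparing sizes forces the parts to be exactly $H_0$ and $H_1$; thus $R=L=\emptyset$. Now part (a) supplies the automorphism $\d_{\a,1,1}$ swapping $H_0$ and $H_1$, so $\G$ is vertex-transitive after all---contradiction. The key step you were missing is to \emph{assume} non-vertex-transitivity and invoke the standard fact that a semisymmetric graph is bipartite with parts equal to the two vertex-orbits, rather than trying to deduce $R=L=\emptyset$ directly from edge-transitivity.
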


\begin{proof}
Suppose $R=L=\emptyset$. Since $H$ is abelian, there exists an automorphism $\a$ of $H$ such that
$\a$ maps every element of $H$ to its inverse, and in particular, $S^\a=S^{-1}$.
It then follows from Proposition~\ref{normaliser} that $\d_{\a, 1, 1}$ is an automorphism of $\G$ of order $2$ interchanging
$H_0$ and $H_1$, and $R(H)\rtimes\lg\d_{\a, 1, 1}\rg$ is regular on $V(\G)$, which proves (a).
Next, for (b), suppose $\G$ is edge- but not vertex-transitive. Then $\G$ is semisymmetric, and
hence bipartite, with its parts being the two orbits of $\Aut(\G)$ on $V(\G)$.
It follows that $H_0$ and $H_1$ are two partition sets of $\G$, and so $R=L=\emptyset$,
but then by (a), $\G$ is vertex-transitive after all, contradiction. Thus $\G$ is vertex-transitive.
\end{proof}


\begin{xca}\label{4-valent abelian}
Let $\,\G=\BiCay(H, R, L, S)$ be a connected half-arc-transitive bi-Cayley graph over an abelian group $H$.
Then the following hold$\,:$
\begin{enumerate}
  \item [{\rm (a)}]\ $R\cup L$ is non-empty and contains no involution.
  \item [{\rm (a)}]\ $|R|=|L|$ is even, and $|S|>2$.
  \item [{\rm (c)}]\ $\G$ has valency $6$ or more.
\end{enumerate}
\end{xca}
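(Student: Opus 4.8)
The plan is to prove the three assertions in turn, taking the valency bound (c) to be a quick consequence of (a) and (b) together with the classical fact of Tutte that a half-arc-transitive graph has even valency. The tool used throughout is the elementary remark that a graph which is edge-transitive and admits some automorphism reversing an edge must be arc-transitive; hence our half-arc-transitive $\G$ admits \emph{no} edge-reversing automorphism, and each contradiction below is obtained by exhibiting one.

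For the statement that $R\cup L$ is non-empty and involution-free, first suppose $R=L=\emptyset$. Since $H$ is abelian, the inversion map $\a\colon h\mapsto h^{-1}$ is an automorphism of $H$, and for any $s\in S$ one checks directly that $R^{\a}=\emptyset=s^{-1}Ls$, $L^{\a}=\emptyset=s^{-1}Rs$ and $S^{\a}=S^{-1}=s^{-1}S^{-1}s$, so $\d_{\a,s,s}\in{\rm I}$ and hence $\d_{\a,s,s}\in\Aut(\G)$ by Proposition~\ref{normaliser}; this automorphism sends $1_0\mapsto s_1$ and $s_1\mapsto 1_0$, reversing the edge $\{1_0,s_1\}$, so $\G$ is arc-transitive, a contradiction. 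Thus $R\cup L\neq\emptyset$. Next, if some $r\in R$ were an involution, then $R(r)\in\Aut(\G)$ would send $1_0\mapsto r_0$ and $r_0\mapsto (r^2)_0=1_0$, reversing the edge $\{1_0,r_0\}$, again a contradiction; hence $R$, and likewise $L$ (using $\G\cong\BiCay(H,L,R,S^{-1})$ from Proposition~\ref{n}(d)), contains no involution.

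For the statements that $|R|=|L|$ is even and $|S|>2$: since $\G$ is vertex-transitive it is regular, and comparing the degree $|R|+|S|$ of $1_0$ with the degree $|L|+|S|$ of $1_1$ gives $|R|=|L|$; and since $R=R^{-1}$ has no involution by the previous paragraph, it splits into pairs $\{r,r^{-1}\}$, so $|R|$ is even. The assertion $|S|>2$ is the main obstacle. I would argue by contradiction: assuming $|S|\le 2$ and normalising so that $1\in S$ (Proposition~\ref{n}(b)), only the cases $S=\{1\}$ and $S=\{1,s\}$ with $s\neq 1$ arise, and in these the edges joining $H_0$ to $H_1$ form a very rigid spanning subgraph — a perfect matching in the first case, a disjoint union of even cycles in the second. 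The idea is to combine this rigidity with the inversion automorphism of $H$ and the edge-transitivity of $\G$ to produce an edge-reversing automorphism (for instance one of the maps $\d_{\a,x,y}$ of Proposition~\ref{normaliser} built from $\a\colon h\mapsto h^{-1}$), or else to contradict edge-transitivity directly by comparing a local invariant — such as the number of $4$-cycles through an edge, or the number of common neighbours of its two ends — across the distinct edge-orbits that $|S|\le 2$ forces; the tetravalent situation $|R|=|L|=|S|=2$ is where the bookkeeping is most delicate.

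Finally, the valency bound follows: the valency of $\G$ equals $|R|+|S|$, where $|R|=|L|\ge 2$ (non-empty by the first part, and even), and $|S|>2$; moreover, since a half-arc-transitive graph has even valency and $|R|$ is even, $|S|$ must be even, so in fact $|S|\ge 4$, whence the valency $|R|+|S|$ is at least $6$.
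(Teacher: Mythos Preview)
Your treatment of (a) and (c) is correct and matches the paper's argument (the paper uses $\d_{\a,1,1}$ rather than your $\d_{\a,s,s}$, but the idea is the same). The gap is in (b), where you acknowledge that $|S|>2$ is ``the main obstacle'' and then offer only a plan --- rigidity of the matching/cycle subgraph, $4$-cycle counts, or some unspecified $\d_{\a,x,y}$ --- without carrying any of it out. That is not a proof, and the tetravalent case you flag as ``delicate'' is exactly the one that needs to be resolved.

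The paper's argument for $|S|>2$ is short and does not use any of the machinery you propose. It runs as follows. Since $\G$ is half-arc-transitive, $\Aut(\G)_{1_0}$ has exactly two orbits $B_1,B_2$ on $\G(1_0)$, each of size $(|R|+|S|)/2$. For any $r\in R$, the arcs $(1_0,r_0)$ and $((r^{-1})_0,1_0)$ are in the same $\Aut(\G)$-orbit (via $R(r^{-1})$), so $r_0$ and $(r^{-1})_0$ must lie in \emph{different} orbits of $\Aut(\G)_{1_0}$ (otherwise an arc would be reversed). Hence exactly half of the vertices $r_0$ ($r\in R$) lie in each $B_i$. Now assume $|S|=2$, say $S=\{1,s\}$ after normalising. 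Then each $B_i$ has size $|R|/2+1$, so $1_1$ and $s_1$ are forced to lie in different orbits of $\Aut(\G)_{1_0}$. But with $\a$ the inversion map, one checks $R^\a=R$, $L^\a=L=s^{-1}Ls$, and $S^\a=\{1,s^{-1}\}=s^{-1}\{1,s\}=s^{-1}S$, so $\s_{\a,s}\in\Aut(\G)_{1_0}$ and it sends $1_1$ to $s_1$, a contradiction. Note that the relevant automorphism is a $\s_{\a,g}$ (preserving the two sides), not a $\d_{\a,x,y}$ as you were looking for.
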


\begin{proof}
Again let $\a$ be the automorphism of $\G$ that takes every element of $H$ to its inverse.
If $R=L=\emptyset$, then by Proposition~\ref{bi-abelian}, $\d_{\a,1,1}\in\Aut(\G)$ and $(1_0, 1_1)^{\d_{\a,1,1}}=(1_1,1_0)$, which implies that $\G$ is arc-transitive, contradiction. Hence $R\cup L$ is non-empty.  Also if $R$ contains an involution $h$, then $\{1_0, h_0\}\in E(\G)$ and $(1_0, h_0)^{R(h)}=(h_0, 1_0)$, which again implies that $\G$ is arc-transitive, contradiction. Similarly, $L$ does not contain an involution. This proves (a). Moreover, because $R=R^{-1}$ and $L=L^{-1}$, it also implies that $|R|=|L|$ is even. Next, since $\G$ is half-arc-transitive, the valency $|\G(1_0)|=|R|+|S|$ is even, and so $|S|$ is even, and therefore $|S|\geq 2$. Also by half-arc-transitivity, $\Aut(\G)_{1_0}$ has two orbits on $\G(1_0)$ of equal size, say $B_1$ and $B_2$, with $r_0$ and $(r^{-1})_0$ being in different orbits, for any $r\in R$. It follows that half of the elements of $R$ are contained $B_1$, and half are in $B_2$. But now suppose $|S|=2$, say $S=\{1, s\}$.  Then since $|B_1| = |B_2| = (|R|+|S|)/2 = |R|/2+1$,  the other two neighbours $1_1$ and $s_1$ of $1_0$ must lie in different orbits of $\Aut(\G)_{1_0}$. On the other hand, it is easy to check that $\s_{\a, s}\in\Aut(\G)_{1_0}$ takes $1_1$ to $s_1$, contradiction. Hence $|S|>2$, which proves (b).
Finally, this implies that the valency $|R|+|S|$ of $\G$ is at least $2+4 = 6$, proving (c).
\end{proof}

In fact, $6$ is the minimum valency of all connected half-arc-transitive bi-Cayley graphs over an abelian group,
and is achieved by the following example:

\begin{example}\label{exam-half-abelian}
{\rm Let $\G=\BiCay(H, R, L, S)$, where $H = \lg a\rg = C_{28}$,
and $R=\{a,a^{-1}\}$, $L=\{a^{13}, a^{-13}\}$ and $S=\{1,a,a^6,a^{19}\}$. Then $\G$ has valency $6$, and an easy computation using Magma~\cite{BCP} shows that $\G$ is half-arc-transitive, with $\Aut(\G)\cong (C_7\times Q_8)\rtimes C_3$, where $Q_8$ is the quaternion group.}
\end{example}

To complete this section, we give two easy corollaries of the above two propositions.

\begin{corollary}\label{abelian is not semisy}
No connected bi-Cayley graph over an abelian group is semisymmetric.
\end{corollary}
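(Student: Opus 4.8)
The plan is to read this off directly from Proposition~\ref{abelian}(b). Recall that a graph is \emph{semisymmetric} exactly when it has constant valency and is edge-transitive but not vertex-transitive. So I would argue by contradiction: suppose $\G=\BiCay(H,R,L,S)$ is a connected bi-Cayley graph over an abelian group $H$, and that $\G$ is semisymmetric. In particular $\G$ is edge-transitive.

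Now Proposition~\ref{abelian}(b) says that every connected edge-transitive bi-Cayley graph over an abelian group is vertex-transitive, so $\G$ is vertex-transitive --- contradicting the assumption that $\G$ is semisymmetric. This proves the corollary.

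There is essentially no obstacle left at this point: all of the content has already been packaged into Proposition~\ref{abelian}, whose proof of part~(b) uses that a semisymmetric graph is bipartite with its two parts equal to the two orbits of its automorphism group (so that here $H_0$ and $H_1$ are the parts and hence $R=L=\emptyset$), together with the fact that when $R=L=\emptyset$ the inversion automorphism $\a\colon h\mapsto h^{-1}$ of the abelian group $H$ yields, via Proposition~\ref{normaliser}, an involutory automorphism $\d_{\a,1,1}$ of $\G$ interchanging $H_0$ and $H_1$. Thus the corollary is just a restatement of that contradiction in the language of semisymmetry, and no further calculation is needed.
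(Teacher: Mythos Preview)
Your proof is correct and matches the paper's approach: the corollary is stated there without proof, as an immediate consequence of Proposition~\ref{abelian}(b), exactly as you argue. Your additional paragraph unpacking the mechanism behind part~(b) is accurate but unnecessary for the corollary itself.
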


\begin{corollary}\label{cyclic}
Let $\,\G=\BiCay(H, \emptyset, \emptyset, S)$ be a connected trivalent edge-transitive bi-Cayley graph over a cyclic group $H\cong C_n$.
Then $n=2$ or $4$, or $n$ is a divisor of $k^2+k+1$ for some $k\in\mz_n^{\,*}$.
Furthermore, if $n\geq 13$ then $\,\G$ is $1$-arc-regular.
\end{corollary}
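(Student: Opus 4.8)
The plan is to force $\G$ to be arc-transitive and then to argue according to its girth. Since $R=L=\emptyset$, the graph $\G$ is bipartite with the two $R(H)$-orbits $H_0,H_1$ as its parts, and connectedness gives $H=\langle S\rangle$ by Proposition~\ref{n}(a). By Corollary~\ref{abelian is not semisy}, $\G$ is not semisymmetric, so the edge-transitive graph $\G$ is vertex-transitive; and a vertex- and edge-transitive graph that is not arc-transitive has even valency (its vertex stabiliser has two equal-sized orbits on the neighbourhood), so the cubic graph $\G$ is arc-transitive. By Tutte's theorem it is then $s$-arc-regular for a unique $s\in\{1,\dots,5\}$, with $|\Aut(\G)|=2n\cdot3\cdot2^{s-1}$ and $|\Aut(\G)_{1_0}|=3\cdot2^{s-1}$. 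Using Proposition~\ref{n}(b) and a translation we may take $S=\{1,a^b,a^c\}$ with $\gcd(b,c,n)=1$. As $\G$ is bipartite its girth is even, and $(1_0,(a^b)_1,(a^b)_0,(a^{b+c})_1,(a^c)_0,(a^c)_1)$ is a genuine $6$-cycle --- its six vertices are distinct because $H$ is abelian and $|S|=3$ --- so the girth of $\G$ is $4$ or $6$.

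If the girth is $4$, then $\G$ is one of the only two cubic arc-transitive graphs of girth $4$, namely $K_{3,3}$ or the cube $Q_3$ (classical; see \cite{CD}), so $2n\in\{6,8\}$ and $n\in\{3,4\}$, and here $3\mid1^2+1+1$. So assume the girth is $6$. Then $S$ is a Sidon set of $C_n$ --- a $4$-cycle through an edge of $\G$ is exactly a nontrivial translate of $S$ meeting $S$ in two points --- and hence $n\geq7$. Choose $\r\in\Aut(\G)_{1_0}$ of order $3$, by Cauchy's theorem. If $\r$ fixed $\G(1_0)$ pointwise, then at each neighbour $v$ of $1_0$ it would fix $v$ and $1_0\in\G(v)$, hence act on the $2$-set $\G(v)\setminus\{1_0\}$; but $\langle\r\rangle\cong C_3$ cannot act transitively on a $2$-set, so $\r$ would fix $\G(v)$ pointwise, and propagating outward along the connected graph would give $\r=1$, a contradiction. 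Thus $\r$ moves a neighbour of $1_0$, and having order $3$ it permutes $\G(1_0)=\{1_1,(a^b)_1,(a^c)_1\}$ in a single $3$-cycle. The crucial step --- which I expect to be the main obstacle --- is to show that $\r$ may be chosen to normalise $R(H)$; equivalently, that $N_{\Aut(\G)}(R(H))$ inherits arc-transitivity from $\Aut(\G)$, so that $\G$ is a normal arc-transitive bi-Cayley graph. I would attempt this by a Sylow/permutation-group argument exploiting the small index $3\cdot2^s$ of $R(H)$ in $\Aut(\G)$, together with the structure of the quotient $\G/R(H)$ (the dipole with three parallel edges), through which the arc-transitive action must lift; alternatively one can appeal to the Conder--Nedela classification \cite{CD} of cubic arc-transitive graphs of girth $6$.

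Granting this, $\r$ fixes the vertex $1_0$ and has odd order, so it preserves the bipartition and hence lies in $R(H)\rtimes{\rm F}$, whose stabiliser of $1_0$ is ${\rm F}$; thus $\r=\s_{\a,g}$ with $\a\in\Aut(H)\cong\mz_n^{\,*}$ and $g\in H$ satisfying $S^\a=g^{-1}S$ (Proposition~\ref{normaliser}). Write $\a:a\mapsto a^k$. Computing the action of $\r$ on $\G(1_0)$ as in the proof of Lemma~\ref{no-3-arc-tran}, and assuming $\r$ sends $1_1\mapsto(a^b)_1\mapsto(a^c)_1\mapsto1_1$, we obtain $g=a^b$, $(a^b)^\a=a^{c-b}$ and $(a^c)^\a=a^{-b}$, i.e.\ $kb\equiv c-b$ and $kc\equiv-b\pmod n$. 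Hence $c\equiv(k+1)b$, so $a^c=(a^b)^{k+1}\in\langle a^b\rangle$, and then $\langle a^b\rangle=\langle a^b,a^c\rangle=H$ by connectedness, so $b\in\mz_n^{\,*}$; substituting $c\equiv(k+1)b$ into $kc\equiv-b$ gives $(k^2+k+1)b\equiv0\pmod n$, whence $k^2+k+1\equiv0\pmod n$. In particular $k^3\equiv1$, so $k\in\mz_n^{\,*}$, and applying the automorphism $a\mapsto a^{b^{-1}}$ of $H$ normalises $S$ to $\{1,a,a^{k+1}\}$, so that $\G\cong\BiCay(C_n,\emptyset,\emptyset,\{1,a,a^{k+1}\})$. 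This proves the first assertion.

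For the last assertion, suppose $n\geq13$ but $\G$ is not $1$-arc-regular; then $s\geq2$, so $\G$ is $2$-arc-transitive, and --- by the same normality reduction --- $N_{\Aut(\G)}(R(H))$ acts transitively on the $2$-arcs of $\G$. Then Theorem~\ref{normal-2-arc}(b) yields $\g\in\Aut(C_n)$ interchanging the two elements $a$ and $a^{k+1}$ of $S\setminus\{1\}$, so $a=(a^{k+1})^\g=a^{(k+1)^2}$; but $(k+1)^2\equiv k^2+2k+1\equiv k\pmod n$ (using $k^2\equiv-k-1$), hence $k\equiv1$ and $n\mid k^2+k+1=3$, contradicting $n\geq13$. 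Therefore $\G$ is $1$-arc-regular. The only genuinely new ingredient needed is the normality reduction --- that $N_{\Aut(\G)}(R(H))$ inherits arc- and $2$-arc-transitivity from $\Aut(\G)$ --- everything else being a short computation with Proposition~\ref{normaliser}.
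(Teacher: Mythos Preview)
You have correctly identified the crux of the difficulty, and it is a genuine gap: there is no short Sylow argument showing that $N_{\Aut(\G)}(R(H))$ inherits arc-transitivity (let alone $2$-arc-transitivity) from $\Aut(\G)$. Indeed $R(H)$ need not be normal in $\Aut(\G)$ at all --- for $n=7$ the graph is the Heawood graph with $\Aut(\G)\cong\PGL(2,7)$, in which the cyclic subgroup of order $7$ is far from normal, and $N_{\Aut(\G)}(R(H))\cong C_7\rtimes C_6$ has vertex-stabiliser of order only $3$, so the normaliser is \emph{not} $2$-arc-transitive even though $\Aut(\G)$ is $4$-arc-transitive. Thus your proposed reduction for the final assertion (``by the same normality reduction, $N_{\Aut(\G)}(R(H))$ acts transitively on the $2$-arcs'') already fails at $n=7$, and establishing it for $n\ge 13$ would essentially amount to proving the theorem. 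Appealing to the Conder--Nedela classification does work, but is a heavy detour and not what is intended here.

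The paper's argument is entirely different and avoids this issue. By Proposition~\ref{abelian}(a), since $H$ is abelian, the subgroup $R(H)\rtimes\lg\d_{\a,1,1}\rg$ (with $\a$ the inversion automorphism) acts \emph{regularly} on $V(\G)$, so $\G$ is a Cayley graph over this group, which is dihedral of order $2n$. One then invokes a theorem of Maru\v si\v c and Pisanski~\cite{MP1} classifying trivalent arc-transitive Cayley graphs over dihedral groups; that theorem yields directly both the divisibility condition $n\mid k^2+k+1$ (or $n\in\{2,4\}$) and the $1$-arc-regularity for $n\ge 13$. In other words, the ``normality reduction'' you are struggling with is replaced by a change of viewpoint: instead of working with $\G$ as a bi-Cayley graph over $C_n$ and trying to control $\Aut(\G)$ relative to $R(C_n)$, one works with $\G$ as an ordinary Cayley graph over $D_n$, where the relevant classification is already in the literature. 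Your computations downstream of the gap (deriving $k^2+k+1\equiv 0$ and the $2$-arc argument via Theorem~\ref{normal-2-arc}) are correct as far as they go, and in fact recover part of what~\cite{MP1} proves, but they are conditional on the unresolved step.
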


\begin{proof} By Proposition~\ref{abelian}, we know that $\G$ is an arc-transitive Cayley graph over
the group $R(H)\rtimes\lg\d_{\a, 1, 1}\rg$, which is dihedral of order $2n$.
It then follows from a theorem in \cite{MP1} on Cayley graphs over dihedral groups that $n=2$ or $4$,
or $n\ |\ k^2+k+1$ for some $k\in\mz_n^{\,*}$.
If $n=2$ or $4$, then $\G$ is isomorphic to the complete graph $K_4$ or the cube graph $Q_3$,
while if $n=3$ or $7$ then $\G$ is isomorphic to the complete bipartite graph $K_{3, 3}$
or the Heawood graph, and in all other cases (with $k \ge 3$ and $n\geq 13$), $\G$ is $1$-arc-regular.
\end{proof}

\section{Trivalent edge-transitive graphs with girth at most $6$}
\label{sec:ET3}

The aim of this section is to give a classification of all connected trivalent edge-transitive graphs with girth $6$,
and to prove Theorem~\ref{th-girth}.
We achieve this in two stages, the first being a special case, and the second the general case.

\subsection{Trivalent normal edge-transitive bi-abelian graphs}
We begin by defining a family of connected trivalent edge-transitive bi-abelian graphs.

Let $n$ and $m$ be any two positive integers with $nm^2\geq 3$.
If $n = 1$ take $\ld=0$, while if $n > 1$ take $\ld\in\mz_n^{\,*}$ such that $\ld^2-\ld+1\equiv 0$ mod $n$.
Now define
\begin{equation}\label{abelian groups}
\begin{array}{l}
\G_{m,n,\ld}=\BiCay(H,\emptyset, \emptyset,\{1, x, x^\ld y\}),
\ \ \hbox{where} \ H = \mathcal{H}_{m,n} = \lg x\rg\times\lg y\rg \cong C_{nm}\times C_m.
\end{array}
\end{equation}

\begin{lemma}\label{arc-tran-abelians}
Let $X=N_A(R(\mathcal{H}_{m,n}))$, where $A=\Aut(\G_{m,n,\ld})$.  Then$\,:$
\begin{enumerate}
\item[{\rm (a)}]\ if $n\leq 3$, then $X$ acts transitively on the $2$-arcs of $\,\G_{m,n,\ld},$ while
\item[{\rm (b)}]\ if $n>3$, then $X$ acts transitively on the arcs but not on the $2$-arcs of $\,\G_{m,n,\ld}$.
\end{enumerate}
Moreover, if $nm^2>4$ then $\,\G_{m,n,\ld}$ has girth $6$.
\end{lemma}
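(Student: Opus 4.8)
The plan is to read the whole lemma off the machinery of Section~\ref{sec:general}, applied to the connected bipartite trivalent graph $\G=\G_{m,n,\ld}=\BiCay(H,\emptyset,\emptyset,S)$, where $H=\lg x\rg\times\lg y\rg\cong C_{nm}\times C_m$ and $S=\{1,x,x^\ld y\}$, so that $\G(1_0)=\{1_1,x_1,(x^\ld y)_1\}$ and, by Proposition~\ref{normaliser} and the proof of Lemma~\ref{no-3-arc-tran}, $X_{1_0}={\rm F}=\{\s_{\a,g}\mid S^\a=g^{-1}S\}$. Two elementary consequences of $\ld^2-\ld+1\equiv0\pmod{n}$ will be used throughout: the integer $\ld^2-\ld+1$ is divisible by $n$, and $\ld\equiv2\pmod{n}$ holds if and only if $n\mid 3$ (for $\ld\equiv2$ gives $0\equiv\ld^2-\ld+1\equiv3$); in particular there is no admissible $\ld$ when $n=2$, so the hypothesis ``$n\le3$'' means $n\in\{1,3\}$. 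First I would show that $X$ is arc-transitive for \emph{every} $n$. Since $H$ is abelian, inversion $\iota\in\Aut(H)$ satisfies $S^\iota=S^{-1}$; by Proposition~\ref{normaliser} this gives $\d_{\iota,1,1}\in{\rm I}$, so $X$ is vertex-transitive and condition~(a) of Theorem~\ref{normal-2-arc} holds. I would then exhibit the automorphism $\a$ of $H$ given by $x\mapsto x^{\ld-1}y$ and $y\mapsto x^{-(\ld^2-\ld+1)}y^{-\ld}$: this is a well-defined endomorphism of $C_{nm}\times C_m$ precisely because $n\mid\ld^2-\ld+1$, its associated integer matrix has determinant $1$, and the inverse matrix is again integral with upper-right entry divisible by $n$, so $\a\in\Aut(H)$; a direct computation gives $S^\a=x^{-1}S$, and hence $S^{\a^{-1}}=(x^\ld y)^{-1}S$. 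Therefore $\s_{{\rm id},1},\s_{\a,x},\s_{\a^{-1},x^\ld y}$ all lie in ${\rm F}=X_{1_0}$ and move $1_1$ to the three neighbours of $1_0$, so $\G$ is normal locally arc-transitive, and Proposition~\ref{normal-arc-tran}(a) (with $S^\iota=S^{-1}$) gives that $X$ is arc-transitive. This also verifies condition~(c) of Theorem~\ref{normal-2-arc}, with $s=x$ and $\b=\a$.

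Next I would pin down exactly when $X$ is $2$-arc-transitive. By Theorem~\ref{normal-2-arc}, with conditions (a) and (c) already established, $X$ is $2$-arc-transitive if and only if condition~(b) holds, i.e.\ if and only if the setwise stabiliser of $S\setminus\{1\}=\{x,x^\ld y\}$ in $\Aut(H)$ contains an element interchanging $x$ and $x^\ld y$. Any such automorphism is forced to be $x\mapsto x^\ld y$, $y\mapsto x^{1-\ld^2}y^{-\ld}$ (the image of $y$ being dictated by sending $x^\ld y$ back to $x$); its matrix has determinant $-1$, so it is an automorphism whenever it is a well-defined endomorphism, and that happens precisely when $nm\mid m(1-\ld^2)$, i.e.\ when $n\mid 1-\ld^2\equiv2-\ld\pmod{n}$. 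By the remark above, $n\mid2-\ld$ holds if and only if $n\mid3$, i.e.\ if and only if $n\le3$. Together with the previous paragraph this proves parts (a) and (b) of the lemma; for $n\le3$ one may also simply write the interchanging automorphism down (the coordinate swap when $n=1$, and $x\mapsto x^2y$, $y\mapsto x^{-3}y^{-2}$ when $n=3$, $\ld=2$).

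Finally I would handle the girth when $nm^2>4$. Being bipartite, $\G$ has no odd cycle, so its girth is even; and a direct check shows that $1_0,x_1,x_0,(x^{1+\ld}y)_1,(x^\ld y)_0,(x^\ld y)_1,1_0$ is a closed walk of length $6$ in $\G$ (it records the identity $x\cdot x^\ld y\cdot x^{-1}\cdot(x^\ld y)^{-1}=1$ in $H$), whose six vertices are pairwise distinct as soon as $1,x,x^\ld y,x^{1+\ld}y$ are pairwise distinct in $H$, which is the case whenever $nm^2>4$ (the exceptions all force $nm^2\le4$). Hence the girth is at most $6$, and it remains only to exclude $4$-cycles. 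Using the transitivity of $R(H)$ on $H_0$ I may assume a $4$-cycle passes through $1_0$, and unwinding the adjacencies shows that $\G$ has a $4$-cycle if and only if the map $(u,v)\mapsto uv^{-1}$ on ordered pairs of distinct elements of $S$ is not injective, i.e.\ if and only if the six elements $x^{\pm1}$, $(x^\ld y)^{\pm1}$, $(x^{\ld-1}y)^{\pm1}$ of $H$ are not pairwise distinct.

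The remaining task, and the one genuinely laborious part of the proof, is to verify that any coincidence among those six elements forces $nm^2\le4$. I would organise this by cases on $m$: when $m=1$ this is the cyclic situation (overlapping with Corollary~\ref{cyclic}), where the six elements are the powers $x^{\pm1},x^{\pm\ld},x^{\pm(\ld-1)}$; the cases $m=2$ and $m\ge3$ are dealt with by first comparing $\lg y\rg$-components and then reducing to a power equality in $\lg x\rg$. In every case the putative coincidences are eliminated using $n\ne2$, the coprimalities $\gcd(\ld,n)=\gcd(\ld-1,n)=1$ (which follow from $\ld(\ld-1)\equiv-1\pmod{n}$), and the equivalence $n\mid3\iff\ld\equiv2\pmod{n}$. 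Granting this, $\G$ has girth exactly $6$ whenever $nm^2>4$, completing the proof. The one piece of input doing all the work throughout is the congruence $\ld^2-\ld+1\equiv0\pmod{n}$: it is exactly what makes the matrices above define automorphisms of $C_{nm}\times C_m$, and exactly the obstruction that fails for the interchanging automorphism once $n>3$.
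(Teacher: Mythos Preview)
Your argument for parts (a) and (b) is correct and follows essentially the same route as the paper: both construct the automorphism $\a:(x,y)\mapsto (x^{\ld-1}y,\,x^{-(\ld^2-\ld+1)}y^{-\ld})$ to obtain local arc-transitivity, use the inversion automorphism for arc-transitivity, and then characterise $2$-arc-transitivity via the swapping automorphism $\b:(x,y)\mapsto(x^\ld y,\,x^{1-\ld^2}y^{-\ld})$, reducing to the condition $n\mid \ld^2-1$, equivalently $n\mid 3$. The only cosmetic difference is that you certify $\a,\b\in\Aut(H)$ by an integer-matrix/determinant argument (viewing $H$ as $\mathbb Z^2/(nm\mathbb Z\times m\mathbb Z)$ and checking the lattice is preserved), whereas the paper checks directly that the images of $x$ and $y$ have the right orders and generate $H$; both are fine.

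For the girth statement your approach genuinely differs from the paper's. The paper exhibits a $6$-cycle and then, to exclude girth $4$, simply invokes the known classification (from \cite{CN}) of connected trivalent arc-transitive graphs of girth at most $4$: these are just $K_{3,3}$ and $Q_3$, which correspond to $(m,n,\ld)=(1,3,2)$ and $(2,1,0)$, i.e.\ $nm^2\le4$. You instead argue directly that a $4$-cycle through $1_0$ exists iff the six differences $x^{\pm1},(x^\ld y)^{\pm1},(x^{\ld-1}y)^{\pm1}$ are not pairwise distinct, and then run a case analysis showing every coincidence forces $nm^2\le4$. This is correct (the case analysis really does close, using $\gcd(\ld,n)=\gcd(\ld-1,n)=1$ and the fact that $y\in\lg x\rg$ forces $m=1$), and has the virtue of being self-contained; the paper's route is shorter but relies on an external classification. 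One small point: your $6$-cycle in fact has six distinct vertices whenever $|S|=3$ (i.e.\ always, for a trivalent $\G_{m,n,\ld}$), not merely when $nm^2>4$, so that parenthetical is unnecessary.
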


\begin{proof}
First, there exists an automorphism $\a$ of $\mathcal{H}_{m,n}$
that takes $(x,y)$ to $(x^{\ld-1}y, x^{-(\ld^2-\ld+1)}y^{-\ld})$.
To see this, note that  $x=(x^{\ld-1}y)^{-\ld}\cdot (x^{-(\ld^2-\ld+1)}y^{-\ld})^{-1}$,
so that $x^{\ld-1}y $ and $x^{-(\ld^2-\ld+1)}y^{-\ld}$ generate $\mathcal{H}_{m,n}$.
Next, $\ld\!-\!1\in\mz_n^{\,*}$ since $\ld(\ld\!-\!1)\equiv-1$ mod $n$, and so $x^{\ld-1}y$ has order $m_1n$ for some $m_1$ dividing $m$. It follows that $x^{(\ld-1)m_1n} = 1 = y^{m_1n}$, so that $mn$ divides $m_1n(\ld\!-\!1)$ and $m$ divides $m_1n$, and then $\frac{m}{m_1}$ divides $\ld\!-\!1$ and $n$, and hence divides ${\rm GCD}(\ld\!-\!1, n)=1$, so $m=m_1$. Thus $x^{\ld-1}y$ has order $mn$.
Similarly, $x^{-(\ld^2-\ld+1)}y^\ld$ has order $k$ for some $k$ dividing $m$ (since $\ld^2-\ld+1\equiv 0$ mod $n$),
and then because $y^{k\ld} = 1 = x^{k(\ld^2-\ld+1)}$,
we find that $\frac{m}{k}\ |\ \ld$ and $\frac{m}{k}\ |\ \frac{mn}{k}\ |\ \ld^2-\ld+1$,
and therefore $\frac{m}{k}=1$, which gives $m=k$, and thus $x^{-(\ld^2-\ld+1)}y^\ld$ has order $m$.

Note also that $\{1, x, x^\ld y\}^\a = \{1,x^{\ld-1}y,x^{\ld(\ld-1)}y^\ld x^{-(\ld^2-\ld+1)}y^{-\ld} \}
= \{1,x^{\ld-1}y,x^{-1}\} = x^{-1}\{1, x, x^\ld y\},$ so that $\a$ acts like left multiplication by $x^{-1}\!$
on the set $S= \{1, x, x^\ld y\}$.
It follows that $\s_{\a,x}$ is an automorphism of $\G_{m,n,\ld}$ that fixes $1_0$, takes $1_1$ to $x_1$,
and $x_1$ to $(xx^\a)_1=(x^\ld y)_1$, and $(x^\ld y)_1$ to $1_1$.
In particular, $\s_{\a,x}$ fixes the vertex $1_0$ and induces a $3$-cycle on its neighbours,
so $\G_{m,n,\ld}$ is locally arc-transitive.
Moreover, there exists an automorphism of $H$ that inverts every element, since $H$ is abelian,
and hence by Proposition~\ref{normal-arc-tran}, we find that $\G_{m,n,\ld}$ is arc-transitive.

Next, if $n\leq 3$, then since $n$ divides $\ld^2-\ld+1$, we have $n=1$ or $3$, and moreover, if $n=1$ then $\ld=0$, while if $n=3$  then $\ld=2$. In both cases it is easy to check that there is an automorphism $\b$ of $\mathcal{H}_{m,n}$ taking
$(x,y)$ to $(x^\ld y, x^{1-\ld^2}y^{-\ld})$.  This automorphism swaps $x$ with $x^\ld y$, and so by Theorem~\ref{normal-2-arc}, the group $X$ acts transitively on the $2$-arcs of $\G_{m,n,\ld}$.
Conversely, suppose $X$ acts transitively on the $2$-arcs of $\G_{m,n,\ld}$.  Then there exists $\b\in\Aut(\mathcal{H}_{m,n})$ such that $\b$ swaps $x$ with $x^\ld y$, so swaps $x^m$ with $x^{m\ld}$, and it follows that $\ld^2\equiv 1$ mod~$n$.
Then since $\ld^2-\ld+1\equiv 0$ mod $n$, we find that $\ld\equiv2$ mod $n$,
and so $0 \equiv \ld^2-\ld+1\equiv 4-2+1 \equiv 3$ mod $n$, which implies that $n \le 3$.

Finally, we consider the girth of $\G_{m,n,\ld}$, which is even, since $\G_{m,n,\ld}$ is bipartite.
In all cases, $\G_{m,n,\ld}$ contains a $6$-cycle, namely $(1_0, x_1, (x^{1-\ld}y^{-1})_0, (x^{1-\ld}y^{-1})_1, (x^{-\ld}y^{-1})_0, 1_1)$, and so its girth is at most $6$.  On the other hand, if the girth is at most $4$, then it is one of the two connected trivalent arc-transitive graphs of girth $4$, namely the complete bipartite graph $K_{3,3}$ or the cube graph $Q_3$ (see \cite{CN}).  These are the graphs that occur in the cases $(m,n,\ld)=(1,3,2)$ and $(2,1,0)$ respectively, and are also the only cases with order at most $8$, and hence with $nm^2 \le 4$.  See also \cite[Lemma~4.1]{Zhou-DA}. \end{proof}

\begin{xca}\label{tri-arc-tran-abelians}
Let $\,\G=\BiCay(H, \emptyset, \emptyset, S)$ be a connected trivalent normal edge-transitive bi-Cayley graph over an abelian group $H$. Then $\,\G\cong\G_{m,n,\ld}$ for some $m, n, \ld$.
\end{xca}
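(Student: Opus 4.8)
The plan is to reduce $\G$ to a concrete normal arc-transitive bi-Cayley graph, extract from local arc-transitivity a single automorphism $\a$ of $H$ obeying the ``cube-root-of-unity'' relation $h\,h^\a h^{\a^2}=1$, and then read off $m,n,\ld$ directly from the structure this forces on $H$. First, since $\G$ is normal edge-transitive, Lemma~\ref{is-bi-part} gives $R=L=\emptyset$; trivalence forces $|S|=3$, and by Proposition~\ref{n}(b) we may take $1\in S$, so $S=\{1,a,b\}$ with $1,a,b$ distinct and $H=\lg a,b\rg$ by Proposition~\ref{n}(a). Writing $X=N_{\Aut(\G)}(R(H))$, Proposition~\ref{abelian}(a) shows that $X$ contains the vertex-regular subgroup $R(H)\rtimes\lg\d_{\a_0,1,1}\rg$, where $\a_0$ inverts every element of $H$; hence $X$ is vertex-transitive, and since $X$ is also edge-transitive on a trivalent graph, a one-line index count (a vertex- and edge- but not arc-transitive group on a connected graph must have even valency) shows that $X$ is arc-transitive. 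So $\G$ is normal locally arc-transitive, and Proposition~\ref{normal-arc-tran} then tells us that $\G(1_0)=\{1_1,a_1,b_1\}$ is a single orbit of $\,{\rm F}=\{\s_{\a,g}\mid S^\a=g^{-1}S\}$.

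Since ${\rm F}$ is transitive on the three neighbours of $1_0$, some $\s_{\a,g}\in{\rm F}$ induces a $3$-cycle on them; from $1_1^{\s_{\a,g}}=g_1$ we get $g\ne1$, and after relabelling the two non-identity elements of $S$ if necessary we may take $g=a$, so that the induced permutation is the $3$-cycle $(1_1\ a_1\ b_1)$. This translates into the identities $a^\a=a^{-1}b$ and $b^\a=a^{-1}$, and a direct check then gives $a^{\a^2}=b^{-1}$, $\a^3=1$, and $h\,h^\a h^{\a^2}=1$ for all $h\in H$.

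Now put $C=\lg a\rg$. From the relations, $C^{\a^2}=\lg b\rg$, and $H=C\,C^{\a^2}$ is a product of two cyclic subgroups of equal order, so the exponent of $H$ equals $|a|$; hence $\lg a\rg$ is a direct factor, say $H=\lg a\rg\times Y$. Set $n=|\lg a\rg\cap\lg b\rg|$ and $m=|Y|$; then $|H|=|C|^2/n=|C|\,m$ forces $|a|=nm$ and $|Y|=m$, so $|H|=nm^2$. Since $H/\lg a\rg=\lg\,b\lg a\rg\,\rg$ is cyclic of order $m$, the complement $Y$ is cyclic of order $m$ and the $Y$-component $y$ of $b$ generates $Y$; write $b=a^\ld y$. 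Applying $\a$ to this identity and using $b^\a=a^{-1}$ yields $a^{\ld^2-\ld+1}=(y^\ld y^\a)^{-1}$; as the right-hand side has order dividing $m$ while $a$ has order $nm$, this forces $n\mid\ld^2-\ld+1$, and then $\ld(\ld-1)\equiv-1\pmod n$ gives $\ld\in\mz_n^{\,*}$. Setting $x=a$, we obtain $H=\lg x\rg\times\lg y\rg\cong C_{nm}\times C_m=\H_{m,n}$ with $S=\{1,x,x^\ld y\}$ and $nm^2=|H|\ge3$ (and $\ld\equiv0$ when $n=1$, matching the stated convention), so $\G\cong\G_{m,n,\ld}$ by Proposition~\ref{n}(c).

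The step I expect to be the main obstacle is the structural paragraph: checking that $a$ genuinely realises the exponent of $H$ (so that $\lg a\rg$ splits off as a direct factor), that the complement $Y$ is cyclic and is generated by the $Y$-coordinate of $b$, and above all that the exponent $\ld$ satisfies $\ld^2-\ld+1\equiv0$ modulo $n=|\lg a\rg\cap\lg b\rg|$ --- it is precisely here that the relation $h\,h^\a h^{\a^2}=1$ gets used. The degenerate cases ($\G\cong K_{3,3}$ when $\a=1$, i.e.\ $(m,n,\ld)=(1,3,2)$, and $\G\cong Q_3$ when $(m,n,\ld)=(2,1,0)$) should be flagged for completeness but need no extra argument.
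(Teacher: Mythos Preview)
Your proof is correct and follows essentially the same strategy as the paper's: both obtain $\s_{\a,a}\in{\rm F}$ inducing the $3$-cycle $(1_1,a_1,b_1)$, extract the relations $a^\a=a^{-1}b$ and $b^\a=a^{-1}$, and then read off $m,n,\ld$. The only real difference is the order in which the structural data are unpacked. You first observe that $|a|=|b|$ equals the exponent of $H=\lg a\rg\lg b\rg$, so $\lg a\rg$ splits off as a direct summand $H=\lg a\rg\times Y$; you then define $y$ as the $Y$-component of $b$, write $b=a^\ld y$, and feed this into $b^\a=a^{-1}$ to obtain $a^{\ld^2-\ld+1}=(y^\ld y^\a)^{-1}$, an element of order dividing $m$, forcing $n\mid \ld^2-\ld+1$. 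The paper instead works directly with $\lg a\rg\cap\lg b\rg=\lg a^m\rg=\lg b^m\rg$, writes $b^m=a^{\ld m}$, and computes $a^{-m}=(b^m)^\a=(a^{-1}b)^{\ld m}=a^{m(\ld^2-\ld)}$ to get the congruence, only afterwards setting $y=a^{-\ld}b$ and checking that $H=\lg x\rg\times\lg y\rg$.

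Your route is marginally more conceptual (the direct-summand fact does the bookkeeping for you), while the paper's is a one-line computation once $b^m=a^{\ld m}$ is in hand; neither buys anything the other lacks. One cosmetic point: your $\ld$ is determined only modulo $nm$ (and depends on the choice of complement $Y$), whereas the paper parametrises $\G_{m,n,\ld}$ by $\ld\in\mz_n^{\,*}$; but different lifts differ by a multiple of $n$, and the automorphism $x\mapsto x$, $y\mapsto x^{-nk}y$ of $\mathcal H_{m,n}$ identifies the resulting connection sets, so this causes no trouble.
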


\begin{proof}
Let $X=N_{\Aut(\G)}(R(H))$. Then by Proposition~\ref{normal-arc-tran}, $\G(1_0)$ is an orbit of $X_{1_0}$, so there exists $\a\in\Aut(H)$ and $a\in H$ such that $\s_{\a, a}$ cyclically permutes the three neighbours of $1_0$ in $\G$, and it follows that $S=\{1, a, aa^\a\}$. Now let $b = aa^\a$. Then $\s_{\a, a}$ induces the $3$-cycle $(1_1,a_1,b_1)$ on $\G(1_0)$, so $ab^\a=1$,
which gives $b^\a = a^{-1}$. Hence in particular, $a$ and $b$ have the same order. Also by connectedness of $\G$, we have $H=\lg a, b\rg$. Next let $n = |\lg a\rg\cap\lg b\rg|$ and $m = |\lg a\rg : \lg a\rg\cap\lg b\rg| = |\lg b\rg : \lg a\rg\cap\lg b\rg|$.
Then we find that $|\lg a\rg| = |\lg b\rg| = nm$,  and $\lg a\rg\cap\lg b\rg=\lg a^m\rg=\lg b^m\rg$, and it follows that  $b^m=a^{\ld m}$ for some $\ld\in\mz_n^{\,*}$ when $n > 1$, or with $\ld = 0$ when $n = 1$.  Moreover, we have
$$a^{-m}=(b^\a)^m=(b^m)^\a=(a^{\ld m})^\a=(a^\a)^{\ld m}=(a^{-1}b)^{\ld m}=a^{-\ld m}b^{\ld m}=a^{-\ld m}a^{\ld^2 m}=a^{m(-\ld+\ld^2)},$$ and therefore $\ld^2-\ld+1\equiv0$ mod $n$ (because $a$ has order $nm$). Finally, letting $x=a$ and $y=a^{-\ld}b$, we have $H=\lg a, b\rg=\lg x\rg\times\lg y\rg$, with $S=\{1, a, b\}=\{1, x, x^{\ld}y\}$, and thus $\G\cong\G(m,n,\ld)$.
\end{proof}

\subsection{Trivalent edge-transitive graphs with small girth}
In this subsection, we use Proposition~\ref{abelian} to study trivalent edge-transitive graphs with girth at most $6$, and prove Theorem~\ref{th-girth}.
This is partially motivated by the work~in \cite{CN} and \cite{Kutnar-JCTB} on the classification of trivalent arc-transitive graphs of small girth. A natural question is whether there exists a trivalent semisymmetric graph of girth at most $6$. We will show that the answer is negative.
First, we prove the following:

\begin{lemma}\label{girth-number}
Let $\,\G$ be a connected trivalent edge-transitive graph of girth $\,6$, and let $A=\Aut(\G)$.
If $c$ is the number of $\,6$-cycles passing through an edge in $\,\G$, then $c=2, 4, 6, $ or $8$. Moreover,
\begin{enumerate}
  \item [{\rm (a)}]\ if $c=2$, then $A_v \cong C_3$ for every vertex $v$ of $\G$,
   or $A_v \cong S_3$ for every vertex $v$ of $\G$,
while
  \item [{\rm (b)}]\ if $c>2$, then $\,\G$ is isomorphic to 
  the Heawood graph, the Pappus graph, the generalised Petersen graph $P(8, 3)$, or the generalised Petersen graph
$P(10, 3)$, with $c = 8, 4, 6$ or $4$ respectively.
\end{enumerate}

\end{lemma}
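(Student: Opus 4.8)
The plan is to count $6$-cycles through a fixed edge $e = \{u,v\}$ by a careful local analysis of the ball of radius $3$ around $e$, using that $\G$ is trivalent, has girth exactly $6$, and is edge-transitive. First I would set up the combinatorial picture: since $\G$ has girth $6$, the two neighbours of $u$ other than $v$ — call them $u_1, u_2$ — and the two neighbours of $v$ other than $u$ — call them $v_1, v_2$ — are pairwise distinct and non-adjacent, and each of $u_1,u_2,v_1,v_2$ has two further neighbours all of which are distinct from those already named (again by girth $6$). A $6$-cycle through $e$ is then precisely a path $u_i \to w \to v_j$ of length $4$ joining some $u_i$ to some $v_j$ through a vertex $w$ at distance $2$ from both $u$ and $v$. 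So $c$ equals the number of such connecting paths, and in particular $c \le 2\cdot 2 \cdot 2 = 8$, since for each of the four ordered pairs $(u_i, v_j)$ there are at most two vertices $w$ adjacent to $u_i$ (namely its two neighbours other than $u$), and each such $w$ can be adjacent to at most one $v_j$ without creating a shorter cycle. This already gives $c \le 8$; the parity claim $c$ even comes from edge-transitivity applied to the "opposite" edge of each $6$-cycle, or more simply from the fact that the stabiliser $A_e$ of the edge $e$ acts on the set of $6$-cycles through $e$ and $|A_e|$ is even (it contains an element swapping $u$ and $v$, since $\G$ is edge-transitive and trivalent, hence arc-transitive or semisymmetric — in the semisymmetric case one instead uses that $A_u$ permutes transitively the three edges at $u$). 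Ruling out $c = 0$ is immediate since the girth is exactly $6$, and then $c \in \{2,4,6,8\}$.

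Next I would handle part (a). If $c = 2$, I want to pin down $A_v$ for a vertex $v$. Since $\G$ is trivalent and edge-transitive, it is arc-transitive or semisymmetric; in the arc-transitive case $A_v$ acts transitively on the three neighbours of $v$, so $3 \mid |A_v|$, and by Tutte's theorem on trivalent arc-transitive graphs $|A_v|$ divides $48$ and in fact $A_v \in \{C_3, S_3, C_3 \times C_2, S_4, S_4 \times C_2\}$. I would argue that if $A_v$ were larger than $S_3$ then the local action would force more $6$-cycles through each edge: an element of order $4$ or a transposition combined with richer local structure produces at least $c \ge 4$. Concretely, if $|A_v| \ge 12$ then $A_v$ contains a $2$-element fixing one neighbour and swapping the other two, together with the $3$-cycle; tracking the orbit of a single $6$-cycle under $A_e$ (which now has order divisible by $4$) forces $c \ge 4$, contradiction. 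In the semisymmetric case, edge-transitivity with a non-vertex-transitive group still gives, via the local analysis, that the $c=2$ condition forces $A_v \cong C_3$ (the full vertex stabiliser cannot contain a transposition acting on the neighbourhood, as that would again double the count). So $c = 2$ yields $A_v \cong C_3$ or $S_3$ uniformly over all vertices (uniform because of edge-transitivity, possibly after noting both ends of an edge then have isomorphic stabilisers).

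For part (b), when $c > 2$ the graph is "locally rich", and I would invoke the known classification of trivalent arc-transitive graphs of girth $6$ (from Conder–Nedela \cite{CN}, cited in the paper) together with a direct argument that a semisymmetric graph cannot have $c > 2$. In the arc-transitive case, girth $6$ and $c \in \{4,6,8\}$ leave only finitely many graphs, and checking the list of small trivalent arc-transitive graphs of girth $6$ — Heawood ($14$ vertices, $c=8$), Pappus ($18$ vertices, $c=4$), $P(8,3)$ ($16$ vertices, $c=6$), $P(10,3)$ ($20$ vertices, $c=4$) — exhausts the possibilities; the point is that having many $6$-cycles through every edge bounds the diameter and hence the order. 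I would compute $c$ for each of these four named graphs directly (a short, routine verification) to confirm the stated values $8, 4, 6, 4$. For the semisymmetric subcase: a trivalent semisymmetric graph with $c > 2$ through every edge would, by the same ball-of-radius-$3$ count, have each vertex lying in many $6$-cycles, forcing a small order; then one checks the (short) list of small trivalent semisymmetric graphs — the smallest being the Gray graph on $54$ vertices with girth $8$ — and finds none of girth $6$ at all, let alone with $c > 2$. This last point is exactly what Theorem~\ref{th-girth} is after, so some care is warranted, but it reduces to a finite check.

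The main obstacle I anticipate is the semisymmetric case: the local-analysis count must be pushed far enough to get a genuine bound on the order (or diameter) of a hypothetical trivalent semisymmetric graph of girth $6$, since unlike the arc-transitive case we cannot simply cite Tutte's bound on $|A_v|$. I expect the cleanest route is to show that $c = 2$ is forced for semisymmetric $\G$ of girth $6$ — equivalently, that $A_v \cong C_3$ — because if $A_v$ contained an element acting as a transposition on $\G(v)$, then combined with the $6$-cycle structure at an adjacent vertex this would either create a shorter cycle or push $c$ above $2$ at some edge, contradicting edge-transitivity (which makes $c$ constant). Making that dichotomy fully rigorous, tracking exactly how two $6$-cycles through an edge can or cannot be "independent", is the delicate part; the rest is bookkeeping and finite verification.
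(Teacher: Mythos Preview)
Your outline has the right overall shape, but there are genuine gaps in each of the three parts.

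\textbf{Parity of $c$.} Your argument that $|A_e|$ is even fails in the semisymmetric case: there is then \emph{no} automorphism swapping $u$ and $v$, so $A_e = A_{uv}$ can perfectly well have odd order. The paper's argument is a simple double count and works uniformly: each $6$-cycle through a vertex $u$ uses exactly two of the three edges at $u$, and each edge lies in $c$ cycles, so the number $b$ of $6$-cycles through $u$ satisfies $3c = 2b$, forcing $c$ even. (Incidentally, your local picture is off by one: a $6$-cycle through $\{u,v\}$ corresponds to a path of length $3$ from $u_i$ to $v_j$, with two intermediate vertices, not one vertex $w$.)

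\textbf{Part (a).} Your claim that ``$|A_v| \ge 12$ forces $c \ge 4$ via orbit-counting'' is not a proof, and in the semisymmetric case Tutte's theorem is unavailable. The paper proceeds in the opposite direction and this is the key idea you are missing: when $c=2$, one shows that each $2$-arc $(v,u,w)$ lies in a \emph{unique} $6$-cycle, so each vertex of $\G_2(u)$ lies in exactly one of the three $6$-cycles through $u$. It follows that any automorphism fixing $u$ and all three neighbours of $u$ must fix every vertex of $\G_2(u)$; by connectedness and induction, the pointwise stabiliser of $\{u\}\cup\G(u)$ is trivial, so $A_u$ acts \emph{faithfully} on $\G(u)$ and hence $A_u \le S_3$. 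Edge-transitivity then gives $|A_u| = 3|A_{uv}| = |A_v|$ for adjacent $u,v$, so the type is uniform.

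\textbf{Part (b).} Citing the Conder--Nedela classification of trivalent arc-transitive graphs of girth $6$ is close to circular here, since the present lemma is an ingredient in that direction. The paper instead does a direct distance-sphere analysis: if some $x\in\G_3(u)$ has $|\G(x)\cap\G_2(u)| = 3$, a cited structural lemma forces $\G$ to be the Heawood graph or $P(8,3)$; otherwise $|\G(x)\cap\G_2(u)| \le 2$ for all such $x$, which gives $b \le 6$ and hence $c \le 4$. When $c=4$ one then computes $|\G_3(u)| = 6$ and bounds $|\G_4(u)| \le 3$ and $|\G_5(u)| \le 1$, obtaining $|V(\G)| \in \{18,20\}$, and identifies $\G$ as the Pappus graph or $P(10,3)$ from the known census of small trivalent edge-transitive graphs. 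Your sketch gestures at ``bounds the diameter and hence the order'' but does not carry out this analysis.
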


\begin{proof}
Let $u$ be any vertex of $\G$.  Since every $6$-cycle passing through $u$ uses two of the
three edges incident with $u$, and every edge lies in $c$ $6$-cycles, the number of $6$-cycles through $u$ is $b = 3c/2$.
In particular, this is independent of $u$, and $c$ is even.
Also because $\G$ has valency $3$ and girth $6$, it is easy to see that there are at most eight $6$-cycles
passing through an edge of $\G$, and so $c=2, 4, 6$ or $8$ (and $b = 3, 6, 9$ or $12$).

Similarly, if $x$ is a vertex at distance $3$ from $u$, then there are at most three $6$-cycles passing
through both $u$ and $x$, and $|\G(x)\cap\G_2(u)|$ is at most $3$.
Moreover, if $|\G(x)\cap\G_2(u)|=3$, then we know from \cite[Lemma~4.6]{Zhou-Feng} that $\G$ is isomorphic
to the Heawood graph or the generalised Petersen graph $P(8, 3)$.  For these two graphs,
we have $c = 8$ and $6$ respectively, and from now on, we may suppose that $|\G(x)\cap\G_2(u)| \le 2$
for every vertex $x\in\G_3(u)$. In particular, since there are $2|\G_2(u)|=12$ edges between $\G_2(u)$ and $\G_{3}(u)$,
under the latter assumption we find that $b\leq 6$, and so $c\leq 4$.

Now suppose $c = 4$.  Then $b = 6$, and also $|\G_3(u)|=6$, with every vertex in $\G_3(u)$
adjacent to two of the vertices in $\G_2(u)$, so $|\G(x)\cap\G_2(u)| = 2$ for every $x\in\G_3(u)$.
Note that this holds for every vertex $u$.
Next, let $y\in\G_4(u)$. If we choose $v\in\G(u)$ such that $y\in\G_4(u)\cap\G_3(v)$,
so that $v$ is a neighbour of $u$ on some path of length $4$ from $u$ to $y$, then $|\G(y)\cap\G_2(v)| = 2$,
and in particular, $|\G(y)\cap\G_3(u)|\geq2$.
Then since each vertex in $\G_3(u)$ is adjacent to just one vertex in $\G_4(u)$,
while each vertex in $\G_4(u)$ is adjacent to two vertices in $\G_3(u)$, it follows that $|\G_4(u)| \le 3$.

Next, if $|\G(y)\cap\G_3(u)| = 3$ then $|\G_4(u)|=2$, with both vertices in $\G_4(u)$ having
three neighbours in $\G_3(u)$, so $\G$ has diameter $4$,
with $|V(\G)| = 1+|\G(u)|+|\G_2(u)|+|\G_3(u)|+|\G_4(u)| = 1+3+6+6+2 = 18$.
Then by what we know about edge-transitive trivalent graphs of small order from \cite{CD, Semi-symm-768},
we find that $\G$ is isomorphic to the Pappus graph.
On the other hand, if $|\G(y)\cap\G_3(u)| = 2$ for all $y \in \G_4(u)$, then there are at most three edges
from $\G_4(u)$ to $\G_5(u)$, so $|\G_5(u)|\leq 3$. But also if $z \in \G_5(u)$ then the same argument
as above shows that $|\G(z)\cap\G_4(u)|\geq2$, and it follows that $|\G_5(u)|=1$ and $|\G(z)\cap\G_4(u)| = 3$.
Hence in this case $\G$ has diameter $5$,
with $|V(\G)| = 1+|\G(u)|+|\G_2(u)|+|\G_3(u)|+|\G_4(u)|+|\G_5(u)| = 1+3+6+6+3+1 = 20$,
and then from \cite{CD, Semi-symm-768} we find that $\G$ is isomorphic to the generalised Petersen graph $P(10, 3)$.
(Note: the only other edge-transitive trivalent graph of order $20$ is the dodecahedral graph, which has girth $5$.)

Finally, suppose $c = 2$.  Then $b=3$, and by edge-transitivity, each of the three neighbours
of $u$ lies in two of the three $6$-cycles passing through $u$, and each $2$-arc of the form $(v,u,w)$
lies in exactly one of them. The same holds at any neighbour of $u$, and it follows that each of six vertices
in $\G_2(u)$ lies in exactly one of the three $6$-cycles through $u$.
Also just three of the vertices in $\G_3(u)$ lie on these cycles, and are then adjacent to two vertices
in $\G_2(u)$, while all other vertices in $\G_3(u)$ are adjacent to a single vertex in $\G_4(u)$.
Since there are $2|\G_2(u)|=12$ edges between $\G_2(u)$ and $\G_{3}(u)$,
we find that $|\G_3(u)|=6/2+6 = 9$, and the induced subgraph on $\{u\}\cup\G(u)\cup\G_2(u)\cup\G_3(u)$
is as shown in Figure~\ref{fig-1}. \\[-30pt]

\begin{figure}[ht]
\begin{center}
\unitlength 4mm
\begin{picture}(40,10)
{\footnotesize\put(20, 1){\circle*{0.4}}
\put(20,1){\line(0,1){1.5}}\put(20,1){\line(2,1){3}}\put(20,1){\line(-2,1){3}} \put(20,0){$u$}

\put(17, 2.5){\circle*{0.4}}\put(17,2.5){\line(-3,1){4}}\put(17,2.5){\line(-1,2){0.8}} 

\put(23, 2.5){\circle*{0.4}}\put(23,2.5){\line(3,1){4}}\put(23,2.5){\line(1,2){0.8}} 

\put(20, 2.5){\circle*{0.4}}\put(20,2.5){\line(-1,1){1.5}}\put(20,2.5){\line(1,1){1.5}}

\put(13, 4){\circle*{0.4}}
\put(13,4){\line(-1,1){3.5}}\put(9.5, 7.5){\circle{0.4}}
\put(13,4){\line(1,1){3.5}}\put(16.5, 7.5){\circle*{0.4}}

\put(16.2, 4){\circle*{0.4}}
\put(16.2,4){\line(1,1){3.5}}\put(19.7, 7.5){\circle*{0.3}}
\put(16.2,4){\line(2,1){3.8}}
\put(20, 5.8){\circle*{0.4}}

\put(27, 4){\circle*{0.4}}
\put(27,4){\line(1,1){3.5}}\put(30.5, 7.5){\circle*{0.3}}
\put(27,4){\line(-1,1){3.5}}\put(23.5, 7.5){\circle*{0.4}}

\put(23.8, 4){\circle*{0.4}}
\put(23.8,4){\line(-1,1){3.5}}\put(20.3, 7.5){\circle{0.4}} 
\put(23.8,4){\line(-2,1){3.8}}

\put(18.4, 4){\circle*{0.4}}
\put(18.4,4){\line(-1,1){3.5}}
\put(14.9, 7.5){\circle{0.4}}
\put(18.5,4){\line(-3,5){2}}

\put(21.5, 4){\circle*{0.4}}
\put(21.5,4){\line(1,1){3.5}}
\put(25, 7.5){\circle*{0.3}}
\put(21.4,4){\line(3,5){2}}}
 \end{picture}
\end{center}\vspace{-.5cm}
\caption{Local subgraph of $\G$ in the case of $c=2$} \label{fig-1}
\end{figure}
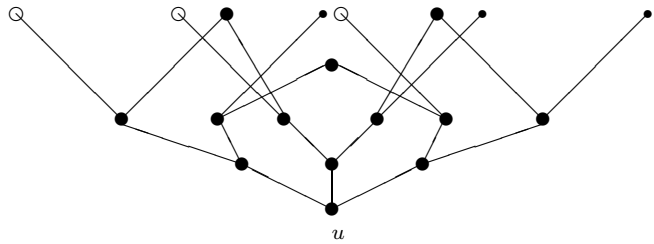

Now from Figure~\ref{fig-1} (or the argument leading to it) we see that
any automorphism in $A=\Aut(\G)$ that fixes $u$ and each of its three neighbours must fix all the vertices of each
of the three $6$-cycles passing through $u$, and hence fixes every vertex of $\G_2(u)$.
Hence $A_{(u\cup\G(u))}=A_{(u\cup\G(u)\cup\G_2(u))}$ for every $u \in V(\G)$.
By connectedness and induction, it follows that $A_{(u\cup\G(u))}$ fixes every vertex of $\G$,
and is therefore trivial. In particular, $A_u$ acts faithfully on $\G(u)$, and so by edge-transitivity,
$A_u \cong C_3$ or $S_3$.
Moreover, if $w$ is any neighbour of $u$, then $|A_u| = 3|A_{uw}| = |A_w|$,
and thus either $A_v \cong C_3$ for all $v \in V(\G)$, or $A_v \cong S_3$ for all $v \in V(\G)$.
\end{proof}


The above lemma helps us to find all trivalent edge-transitive graphs of girth $6$, as follows.

\begin{xca}\label{th-girth6}
Let $\,\G$ be a connected trivalent edge-transitive graph of grith $6$.
Then either $\,\G\cong\G_{m,n,\ld}$ with $nm^2>9$ $($as defined in $(\ref{abelian groups}))$,
or $\,\G$ is isomorphic to the Heawood graph, the Pappus graph, the generalised Petersen graph $P(8, 3)$,
or the generalised Petersen graph $P(10, 3)$.
In particular, in all cases, the graph $\,\G$ is arc-transitive.
\end{xca}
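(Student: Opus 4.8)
The plan is to argue according to the number $c$ of $6$-cycles passing through an edge of $\G$, which by Lemma~\ref{girth-number} equals $2$, $4$, $6$ or $8$. When $c>2$ there is nothing left to do: Lemma~\ref{girth-number}(b) identifies $\G$ with the Heawood graph, the Pappus graph, $P(8,3)$ or $P(10,3)$, and each of these four graphs is well known to be arc-transitive. So the real content is the case $c=2$, and the goal there is to show that $\G$ must be one of the graphs $\G_{m,n,\ld}$ with $nm^2>9$, and in particular is arc-transitive.

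So suppose $c=2$, and write $A=\Aut(\G)$. By Lemma~\ref{girth-number}(a), $A_v\cong C_3$ or $A_v\cong S_3$ for every vertex $v$, so $A_v$ is transitive on $\G(v)$ and $\G$ is locally arc-transitive; and from the proof of that lemma the induced subgraph on $\{u\}\cup\G(u)\cup\G_2(u)\cup\G_3(u)$ is forced to be the configuration of Figure~\ref{fig-1}, with $A_{(u\cup\G(u))}=1$ (so an automorphism is pinned down by its action on a vertex and its three neighbours). The key step I would carry out is to turn this rigid local picture into a global bi-Cayley structure over an abelian group: fixing $u$ and an order-$3$ element $\rho\in A_u$ (which cyclically permutes the three $6$-cycles through $u$), one shows by ``following $6$-cycles'' that $\G$ carries two commuting automorphisms acting along hexagon edges as ``translations''; the subgroup $H$ they generate is abelian, acts semiregularly on $V(\G)$, and has exactly two orbits, which moreover form a bipartition of $\G$. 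This realises $\G$ as $\BiCay(H,\emptyset,\emptyset,S)$ with $H$ abelian. One can arrange that $\rho$ fixes $1_0$ and normalises $R(H)$, so that $\rho$ lies in the group ${\rm F}$ of Proposition~\ref{normaliser} and cyclically permutes $\G(1_0)=\{s_1:s\in S\}$; hence $\G(1_0)$ is an ${\rm F}$-orbit, and by Proposition~\ref{normal-arc-tran}, $\G$ is normal locally arc-transitive, hence normal edge-transitive.

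With this in hand the rest is bookkeeping. Proposition~\ref{tri-arc-tran-abelians} applies and gives $\G\cong\G_{m,n,\ld}$ for suitable $m,n,\ld$, and then $\G$ is arc-transitive by Lemma~\ref{arc-tran-abelians} (so no semisymmetric graph occurs, in line with Corollary~\ref{abelian is not semisy}). Since $\G$ has girth $6$, Lemma~\ref{arc-tran-abelians} gives $nm^2>4$, and since a trivalent graph of girth $6$ has at least $2(1+2+4)=14$ vertices we get $nm^2\ge 7$. Checking the cases $7\le nm^2\le 9$: for $nm^2=7$ (forcing $n=7$, $m=1$) one gets the Heawood graph, for $nm^2=9$ the only admissible triple is $(m,n,\ld)=(3,1,0)$, giving the Pappus graph, and $nm^2=8$ admits no valid $\ld$ at all. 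Both the Heawood and Pappus graphs have $c>2$, so neither can occur in the present case; therefore $nm^2>9$. Combining this with the case $c>2$ yields precisely the list in the statement, and $\G$ is arc-transitive throughout.

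\textbf{Main obstacle.} The one genuinely nontrivial point is the construction in the case $c=2$: showing that the purely local data of Figure~\ref{fig-1}, together with $A_{(u\cup\G(u))}=1$ and the order-$3$ vertex-stabiliser automorphisms, forces a bipartition of $\G$ and an abelian group acting semiregularly and with exactly two orbits. Once that is established, Propositions~\ref{normaliser}, \ref{normal-arc-tran} and~\ref{tri-arc-tran-abelians} together with Lemma~\ref{arc-tran-abelians} complete the proof with only routine checking.
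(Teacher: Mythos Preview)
Your overall architecture matches the paper's: split on the value of $c$; for $c>2$ quote Lemma~\ref{girth-number}(b); for $c=2$ exhibit an abelian group acting semiregularly with two orbits, then feed into Proposition~\ref{tri-arc-tran-abelians} and Lemma~\ref{arc-tran-abelians}; finally rule out $nm^2\le 9$. Your endgame bookkeeping is fine.

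The gap is exactly the one you flag, and it is a real gap: you do not actually construct the abelian semiregular subgroup, nor do you verify that it is normalised by the order-$3$ vertex stabiliser (which you need for ``normal edge-transitive'' and hence for Proposition~\ref{tri-arc-tran-abelians}). Saying ``follow $6$-cycles to get two commuting translations'' is a picture, not an argument, and in particular it does not transparently handle the a priori semisymmetric case, which is the whole point of the proposition. You also implicitly use that $\G$ is bipartite, which in the arc-transitive case needs a citation (e.g.\ \cite{Feng-Nedela} or \cite{CN}); it is automatic only in the semisymmetric case.

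The paper fills this gap by a different, algebraic route. It first records that $\G$ is bipartite, picks an edge $\{u,v\}$, and notes that $B=\langle A_u,A_v\rangle$ is edge-transitive. It then uses the relevant amalgams: in the arc-transitive case the Djokovi\'c--Miller types $1'$ and $2'$, and in the (hypothetical) semisymmetric case the Goldschmidt types $G_1$ and $G_1^{\ 1}$. Girth~$6$ forces an extra relator of length~$6$, and a short {\sc Magma} enumeration shows that, up to replacing generators by inverses, the only relator compatible with a large enough group is $(hk)^3=1$, where $h,k$ are order-$3$ generators of $A_u,A_v$. With $h^3=k^3=(hk)^3=1$ in hand, the paper sets $x=hkh$, $y=hk^{-1}$, checks by direct conjugation that $J=\langle x,y\rangle$ is abelian, normal of index~$3$ in $L=\langle h,k\rangle$, and hence semiregular with the two parts of $\G$ as orbits; then it verifies case by case (against $a$ and, when present, $p$) that $J\lhd A$. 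This yields a \emph{normal} bi-Cayley structure over the abelian group $J$, so Proposition~\ref{tri-arc-tran-abelians} applies and gives $\G\cong\G_{m,n,\ld}$, which is arc-transitive. In short: your ``two commuting translations'' are exactly $x=hkh$ and $y=hk^{-1}$, but producing them requires the amalgam analysis and the elimination of the other length-$6$ relators; without that step the argument is incomplete.
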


\begin{proof}
Let $A=\Aut(\G)$.  First we show that $\G$ is bipartite.  If $\G$ is arc-transitive, then this follows
from \cite[Corollary~6.3]{Feng-Nedela} or from what was proved for girth $6$ in \cite{CN},
while if $\G$ is not arc-transitive, then $\G$ is semisymmetric and hence bipartite.
Moreover, $A = \Aut(\G)$ acts transitively on each part of $\G$.

Next, let $\{u, v\}\in E(\G)$, and take $B=\lg A_u, A_v\rg$. Then $B$ is edge- but not vertex-transitive on $\G$,
and by edge-transitivity, we have $|A_u: A_{uv}|=3= |A_v:A_{uv}|$.
If there are more than two $6$-cycles passing through $\{u, v\}$ in $\G$, then by Lemma~\ref{girth-number}(b),
we know that $\G$ is isomorphic to the Heawood graph, the Pappus graph, $P(8, 3)$ or $P(10, 3)$,
all of which are arc-transitive.

{}From now on, we will suppose that  there are exactly two $6$-cycles passing through $\{u, v\}$,
and hence (by Lemma~\ref{girth-number}(a)) that $A_u\cong A_v\cong C_3$ or $S_3$.

Under this assumption, it follows that if $\G$ is arc-transitive, then $A$ is a quotient of one of the Djokovi{\' c}-Miller
amalgams $1'$ or $2'$ and $2''$ from \cite{DjokovicMiller} used in \cite{CD} and \cite{CN},
while if $\G$ is semisymmetric then $A$ is a quotient of one of the Goldschmidt amalgams $G_1$
and $G_{1}^{\ 1}$ from \cite{Goldschmidt} used in \cite{Semi-symm-768}.
In particular, $A$ can be obtained from one of those amalgams by adding extra relations
to their defining presentations, to force a circuit of length $6$ in the resulting graph.

When $\G$ is arc-transitive, we find more specifically from \cite{CN} that for girth $6$ the amalgam $2''$
can be eliminated, and furthermore, that if $A_u\cong A_v\cong C_3$
then $A$ is a quotient of the ordinary $(2,3,6)$ triangle group $\langle\, a,h \ | \  a^2 = h^3 = (ha)^6 = 1 \,\rangle$,
with the image of $h$ and generating $A_u$, and the image of $h^a$ generating $A_v$,
while if $A_u\cong A_v\cong S_3$ then $A$ is a quotient of either the extended $(2,3,6)$ triangle
group $\langle\, a,h,p \ | \  a^2 = h^3 = (ha)^6 = p^2 = (ap)^2 = (hp)^2 = 1 \,\rangle$, with the images
of $h$ and $p$ and generating $A_u$, and the images of $h^a$ and $p^a$ ($=p$) generating $A_v$.
In both cases, the elements $h$ and $k = h^a$ satisfy the relations $h^3 = k^3 = (hk)^3 = 1$,
and their images generate a subgroup of $A$ (of index $2$ or $4$) that acts transitively on each part of $\G$.

On the other hand, when $\G$ is semisymmetric, we can perform the same kind of analysis as carried out
in \cite{CN} for the case of girth $6$.

If $A_u\cong A_v\cong C_3$ then $A$ is a quotient of the free product $C_3 * C_3 = \langle\, h,k \ | \  h^3 = k^3 = 1 \,\rangle$,
with the images of $h$ and $k$ generating $A_u$ and $A_v$, and having girth $6$ implies that some further
relation $w = 1$ is satisfied, where $w = w(h,k)$ is a word of length $6$ in the generators $h$ and~$k$.
Without loss of generality, $w = h^{r_1}k^{s_1}h^{r_2}k^{s_2}h^{r_3}k^{s_3}$ with $r_i = \pm 1$
and $s_i = \pm 1$ for $1 \le i \le 3$, and then an easy computation using {\sc Magma}~\cite{BCP} shows that
every such relation forces the quotient to have order at most $24$, except in the cases where
$w = (hk)^3$, $(hk^{-1})^3$, $(h^{-1}k)^3$ or $(h^{-1}k^{-1})^3$.
But we know from \cite{Semi-symm-768} that a semisymmetric trivalent graph has order at least $54$
and hence at least $81$ edges, so $|A| \not\le 24$.  Also we can replace each of $h$ and $k$ by its inverse,
and so we may conclude that $w = (hk)^3$, and again we have elements $h$ and $k$ satisfying the
relations $h^3 = k^3 = (hk)^3 = 1$.

Similarly, if $A_u\cong A_v\cong S_3$ then $A$ is a quotient
of $\langle\, h,k,p \ | \  h^3 = k^3 = p^2 = (hp)^2 = (kp)^2 = 1 \,\rangle$,
with $A_u$ and $A_v$ being the images of $\langle h,p\rangle$ and  $\langle k,p\rangle$,
and in this case girth $6$ implies that some relation of the form $w = 1$ or $w = p$ is satisfied,
where $w$ is as above.   Here the analogous {\sc Magma} computation shows that there are only four
such relations that produce a quotient of order greater than $60$, namely $(hk)^3 = 1$ and the others
obtainable by replacing $h$ and/or $k$ by their inverses.

Hence in all cases, whether $\G$ is arc-transitive or semisymmetric, $A =\Aut(\G)$ contains two elements
$h$ and $k$ that fix the vertices $u$ and $v$ and induce $3$-cycles on $\G(u)$ and $\G(v)$, respectively,
and satisfy the relations $h^3 = k^3 = (hk)^3 = 1$.
Also it is clear that the subgroup $L$ generated by $h$ and $k$ is edge-transitive, with two orbits on vertices
of $\G$ (namely the two parts of $\G$), and has index at most $2$ in $B=\lg A_u, A_v\rg$;
indeed $B = \langle h,k \rangle$ or $\langle h,k,p \rangle$ in each of the above cases.

Now let $J$ be the subgroup of $L = \langle h,k \rangle$ generated by $x = hkh$ and $y = hk^{-1}$.
Then $J$ is normal in~$L$, because
$$x^h = kh^2 = kh^{-1} = y^{-1} \ \hbox{ and } \ y^h = k^{-1}h = k^{2}h = y^{-1}x,$$
while
$$x^k = k^{-1}hkhk = k^{-1}(hk)^2 = k^{-1}(hk)^{-1} =  k^{-2}h^{-1} = kh^{-1} = y^{-1}
\ \hbox{ and } \ y^k = k^{-1}h = k^{2}h = y^{-1}x,$$
and it follows that also $J$ is abelian, since $y^x = y^{hkh} = (y^{-1}x)^{kh} = (x^{-1}yy^{-1})^h = (x^{-1})^h = y$.

Moreover, $Jh = Jk$, since $hk^{-1} = y \in J$, and therefore $L/J = \lg Jh, Jk \rg =\lg Jh\rg$,
and then since $h$ has order $3$ it follows that $|L/J| = 1$ or $3$.
On the other hand, if $L = J$ then $L$ is abelian so $h$ commutes with $k$ and therefore the
edge-transitive group $L = \langle h,k \rangle$ has order at most $9$, which is impossible
since $\G$ has girth $6$.  Hence $|L:J| = 3$.
In particular, $h \not\in J$ and $k \not\in J$,
so $J$ is complementary to each of $L_u = \lg h \rg$ and $L_v = \lg k \rg$ in $L$,
and it follows that $J$ acts semi-regularly on $V(\G)$, with two orbits, namely the two parts of $\G$.
Thus  $\G$ is a bi-Cayley graph over the abelian group $J$.

But furthermore, we can show that $J$ is normal in $A=\Aut(\G)$, in all cases.
For if $\G$ is semisymmetric and $A_u\cong A_v\cong C_3$,
then $A = \lg h,k \rg = L$, while if $\G$ is semisymmetric and $A_u\cong A_v\cong S_3$,
then $A = \lg h,k,p \rg$ for some $p$ satisfying $p^2 = (hp)^2 = (kp)^2 = 1$,
and then
$$x^p = (hkh)^p = h^{-1}k^{-1}h^{-1} = x^{-1} \ \hbox{ and } \ y^p = (hk^{-1})^p = h^{-1}k = h^{-1}k^{-2} = x^{-1}y.$$
On the other hand, if $\G$ is arc-transitive and $A_u\cong A_v\cong C_3$,
then $A = \lg h,a \rg$ for some involution $a$ conjugating $h$ to $k$,
and then
$$x^a = (hkh)^a = khk = h^{-1}k^{-1}h^{-1} = x^{-1} \  \hbox{ and } \  y^a = (hk^{-1})^a = kh^{-1} = y^{-1},$$
while if $\G$ is arc-transitive and $A_u\cong A_v\cong S_3$,
then $A = \lg h,a,p \rg$ for some $a$ as above, and some $p$ satisfying $p^2 = (ap)^2 = (hp)^2 = 1$,
and then $h^p = h^{-1}$ while $k^p = (aha)^p = ah^{-1}a = k^{-1}$, and so again $x^p = x^{-1}$ and $y^p = x^{-1}y.$
Thus  $\G$ is a normal bi-Cayley graph over the abelian group $J$.

We can now apply Proposition~\ref{tri-arc-tran-abelians},
which tells us that $\G$ is isomorphic to $\G_{m,n,\ld}$ for some $m, n, \ld$ with $\ld^2-\ld+1 \equiv 0$ mod $n$.
Hence in particular, $\G$ is arc-transitive.

Finally, if $nm^2\leq 9$ then it is easy to see that $(n,m)=(3,1), (7,1), (1, 2),$ or $ (1,3)$.
Also if $(n,m)=(3,1)$ or $(1,2)$, then $\G_{m,n,\ld}\cong\K_{3,3}$ or $Q_3$, each of which has girth $4$,
while if $(n,m)=(7,1)$ or $(1,3)$, then $\G_{m,n,\ld}$ is isomorphic to the Heawood graph or Pappus graph,
which do not satisfy our assumption on the number of $6$-cycles through an edge.
Thus $nm^2>9$, completing the proof.
\end{proof}

We can now prove Theorem~\ref{th-girth}, showing that all trivalent edge-transitive graphs of girth
at most 6 are known, and are arc-transitive.
\medskip

\f{\bf Proof of Theorem~\ref{th-girth}}.\ 
The arc-transitive trivalent graphs of girth less than 6 are known (see \cite{CN} or \cite{Kutnar-JCTB}),
and by \cite[Lemma~4.1]{Zhou-DA}, there is no semisymmetric trivalent graph of girth less than 6,
and Proposition~\ref{th-girth6} gives all trivalent edge-transitive graphs of girth exactly 6,
with none being semisymmetric.
\hfill$\Box$ \smallskip

\section{Edge-transitive bi-dihedrants}
\label{sec:ET-bidihedrants}

In this section, we investigate edge-transitive bi-dihedrants (that is, bi-Cayley graphs over dihedral groups).
We will show there are no semisymmetric bi-dihedrants of valency at most $5$, and on the other hand,
by considering normal edge-transitive bi-dihedrants, that there exist semisymmetric bi-dihedrants of valency $2k$
for every odd integer $k > 1$.  Also we give a characterisation of $6$-valent edge-regular semisymmetric bi-Cayley graphs over
a dihedral group $D_{n}$ of odd degree $n$.  In turn, this enables us to answer two questions proposed
in 2001 by Maru\v si\v c and Poto\v cnik on semisymmetric tetracirculants \cite{MP-gf}.

\subsection{The smallest valency of semisymmetric bi-dihedrants}
The aim of this subsection is to prove the following theorem.

\begin{theorem}\label{th-valency5}
Let $\G={\rm BiCay}(H,R,L,S)$ be a connected semisymmetric bi-Cayley graph over a dihedral group
$H=\lg\, a, b\ |\ a^n = b^2 = (ab)^2 = 1 \,\rg \cong D_n\ ($for some $n\geq 3)$.
Then the valency of $\,\G$ is at least $6$.
\end{theorem}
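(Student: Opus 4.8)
The plan is to show that a connected semisymmetric bi-dihedrant of valency at most $5$ cannot exist, by ruling out valencies $3$, $4$ and $5$ in turn. Since a semisymmetric graph has constant valency and is edge- but not vertex-transitive, valency $2$ is immediately impossible (a connected $2$-valent graph is a cycle, which is arc-transitive), so the content is in valencies $3$, $4$, $5$.

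For valency $3$, I would invoke the classification machinery already built up in Section~\ref{sec:ET3}: by Theorem~\ref{th-girth} (or directly by \cite[Lemma~4.1]{Zhou-DA} plus Proposition~\ref{th-girth6} and the known lists of small trivalent semisymmetric graphs from \cite{Semi-symm-768}), every connected trivalent semisymmetric graph has girth at least $8$ and order at least $54$. It then suffices to check that none of the small trivalent semisymmetric graphs arises as a bi-Cayley graph over a dihedral group; more efficiently, one observes that a trivalent bi-dihedrant $\BiCay(D_n,R,L,S)$ that is semisymmetric must (by Lemma~\ref{is-bi-part}, since semisymmetric implies bipartite with the $R(H)$-orbits as the parts \emph{only if} it is normal edge-transitive — so one cannot directly invoke that lemma) instead be handled by a short structural argument: with valency $3$ and $H$ dihedral, the possibilities for $(|R|,|L|,|S|)$ are very limited, and in each case one exhibits either a vertex-swapping automorphism or a $2$-arc that cannot be reversed. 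The cleanest route is probably: a trivalent bi-dihedrant has $H$ generated by $R\cup L\cup S$ with $|R|+|S| = |L|+|S^{-1}| = 3$, so either $|S|=1$ and $|R|=|L|=2$, or $|S|=3$ and $R=L=\emptyset$, or $|S|=2$ and $|R|=|L|=1$ (forcing an involution in $R$, hence an arc-reversing automorphism $R(r)$, contradiction); the first case forces $H$ to be generated by two involutions plus nothing, and one shows directly it is vertex-transitive; the second case is covered by the bi-abelian-style analysis only if $H$ were abelian, so instead one checks that $\Aut(D_n)$ acting on a $3$-element set $S\ni 1$ of size $3$ always admits a suitable involution, again yielding vertex-transitivity.

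For valencies $4$ and $5$ the main tool should be a careful case analysis of the subsets $R,L,S$ in $H = D_n = \langle a,b \mid a^n = b^2 = (ab)^2 = 1\rangle$, combined with the automorphism group $\Aut(D_n)$, which for $n\ge 3$ (and $n\ne$ a couple of small exceptions such as $n=4$) is well understood as $\mathbb{Z}_n \rtimes \mathbb{Z}_n^{\,*}$. The key reductions are: (i) up to equivalence of bi-Cayley triples we may take $1\in S$ (Proposition~\ref{n}); (ii) if $R$ or $L$ contains an involution we get an arc-reversing automorphism $R(r)$, contradicting semisymmetry, so $R$ and $L$ consist of non-involutions and hence $|R|=|L|$ is even; (iii) the elements of $D_n$ split into the $n$ rotations $a^i$ and the $n$ reflections $a^i b$, and the generation condition $H = \langle R\cup L\cup S\rangle$ forces at least one reflection to appear. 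Running through the partitions $|R|+|S| \in \{4,5\}$ with $|R|=|L|$ even and $1\in S$, one whittles down to a handful of configurations; for each, I would produce an explicit automorphism of $\G$ swapping the two parts — typically a $\delta_{\alpha,x,y}$ from Proposition~\ref{normaliser} built from the inverting-type automorphism of $D_n$ or from conjugation — thereby showing $\G$ is vertex-transitive, contradiction. Where no such automorphism exists the graph turns out to be disconnected or to have a repeated-neighbourhood (unworthy) structure that is still vertex-transitive, or the triple fails the generation condition.

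The \textbf{main obstacle} will be the valency-$5$ case: here $|S|$ is odd (since $|R|=|L|$ is even and $|R|+|S|=5$), so $|S|\in\{1,3,5\}$ with $|R|=|L|\in\{4,2,0\}$ respectively, and the sub-case $|S|=5$, $R=L=\emptyset$ requires understanding the action of $\Aut(D_n)$ on $5$-subsets of $D_n$ containing $1$ and showing that semisymmetry (no $\alpha$ with $S^\alpha = S^{-1}$, via Proposition~\ref{normal-arc-tran}, plus the failure of the full $\Aut(\G)$ to be vertex-transitive) cannot be reconciled with connectedness — this is where one genuinely needs to know that $\Aut(\G)$ is not much larger than $N_{\Aut(\G)}(R(H))$, which may force an appeal to the structure of edge-transitive graphs of small valency or a direct combinatorial argument on short cycles through an edge. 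I expect this odd-valency bookkeeping, together with the need to handle the small exceptional dihedral groups ($n\le 4$) separately, to be the most delicate part; everything else reduces to the ``involution in $R$ or $L$ gives an arc-reversal'' observation and mechanical enumeration.
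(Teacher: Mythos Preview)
Your proposal has two genuine gaps, both stemming from a single misconception.

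First, you explicitly worry that Lemma~\ref{is-bi-part} requires normal edge-transitivity, and so you refuse to conclude $R=L=\emptyset$ and instead launch a case analysis on $(|R|,|L|,|S|)$. This is unnecessary. For \emph{any} semisymmetric bi-Cayley graph, the two $\Aut(\G)$-orbits on vertices form the bipartition (this is the definition of semisymmetric: edge-transitive, regular, not vertex-transitive forces exactly two vertex-orbits with every edge between them). Since $R(H)\le\Aut(\G)$ and $R(H)$ already has the two orbits $H_0$ and $H_1$ of size $|H|$ each, while the bipartition also has two parts of size $|H|$ each, the $R(H)$-orbits \emph{are} the bipartition. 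Hence $R=L=\emptyset$ immediately, with no hypothesis on normality. The paper's proof opens with exactly this one-line observation. Once you have it, your entire case split on $|R|,|L|$ evaporates and valencies $3$ and $4$ fall out of a short structural argument: since no $\alpha\in\Aut(H)$ can send $S$ to $S^{-1}$ (else Proposition~\ref{normaliser} gives vertex-transitivity), $S\setminus\{1\}$ cannot be all involutions; connectedness forces a reflection in $S$, and if there are only one or two reflections then an explicit automorphism of $D_n$ (inverting $a$ and swapping the reflections) sends $S$ to $S^{-1}$. So $S$ contains at least three reflections plus one non-involution rotation plus $1$, giving $|S|\ge 5$.

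Second, for the valency-$5$ case you have no concrete plan: you anticipate needing to bound $\Aut(\G)$ relative to $N_{\Aut(\G)}(R(H))$, which is both hard and not what is needed. The paper's actual argument is purely combinatorial and makes no use of the normaliser at all: with $S=\{1,b,ba^i,ba^j,a^k\}$ fixed, one counts the $4$-cycles through the edge $\{1_0,1_1\}$ (always $3$ or $6$, depending on whether $k$ satisfies one of six mutually exclusive congruences), then through $\{1_0,b_1\}$, then through $\{1_0,(ba^j)_1\}$, and shows these counts cannot all agree. Edge-transitivity forces them to agree, contradiction. This is the key idea you are missing; nothing in your outline suggests looking at short cycles through specific edges, and without it the $|S|=5$ case does not close.
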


\begin{proof}
First, $\G$ is bipartite, and its two parts are the orbits of both $A=\Aut(\G)$ and its subgroup
of $H$ on $V(\G)$. It follows that $R=L=\emptyset$.
By Proposition~\ref{n}(b), we may assume that $1\in S$, and since $\G$ is not vertex-transitive,
we find by Proposition~\ref{normaliser} that there is no automorphism of $H$ mapping $S$ to $S^{-1}$.

It follows that $S \setminus \{1\}$ cannot consist entirely of involutions, and so $S$ must contain
at least one element of order greater than $2$.
On the other hand, as $\G$ is connected, $S$ must contains at least one element of the form
$ba^i$ (with $i\in\mz_n$), and then by replacing $b$ by $ba^i$ if necessary, we may suppose that  $i = 0$
and hence that $S$ contains $b$.  If, however, this is the only involution in $S$, then all
other elements of $S$ are powers of $s$, and so the automorphism of $H$ taking $(a,b)$ to $(a^{-1},b)$
inverts every element of $S$, so takes $S$ to $S^{-1}$, which is impossible.
Similarly, if $S$ contains just one other involution of the form $ba^j$ (with $j\in\mz_n$),
then the automorphism of $H$ taking $(a,b)$ to $(a^{-1},ba^j)$ swaps the involutions $b$ and $ba^j$
and inverts every other element of $S$, and so takes $S$ to $S^{-1}$, which again is impossible.
Hence $S$ contains at least three involutions, as well as an element of order greater than $2$.
In particular, the valency $|S|$ of $\G$ is at least $5$.

To complete the proof, we need only show that $|S|$ cannot be $5$.  So we assume the contrary.
Then we know that $S=\{1, b, ba^i, ba^j, a^k\}$ where $0 < i < j < n$, and $a^k$ is not an involution.
In particular, we cannot have $\,i \equiv -i \equiv k$ mod $n,\,$ or $\,j \equiv -j \equiv k$ mod $n,\,$
or $\,i-j \equiv j-i \equiv k$ mod $n. \,$
Also the fact that no automorphism of $H$ taking $(a,b)$ to $(a^{-1},b)$ or $(a^{-1},ba^{2i})$
or $(a^{-1},ba^{2j})$ is allowed to take $S$ to $S^{-1}$ implies that $\,i \not\equiv -j$ mod $n,\,$
and $\,j \not\equiv 2i$ mod $n,\,$ and $i \not\equiv 2j$ mod $n$.

We proceed by considering the number of cycles of length $4$ through a given edge,
which by edge-transitivity of $\G$ must be a constant.

Up to reversal, there are either three or six $4$-cycles through the edge $\{1_0,1_1\}$,
namely the three of the form $(1_0,1_1,x_0,x_1)$ for $x \in \{b,ba^i,ba^j\}$, plus \\[-15pt]

\begin{center}\begin{tabular}{ll}
$(1_0,1_1,(ba^i)_0,(a^k)_1)$, $\,(1_0,1_1,(ba^i)_0,b_1)\,$ and $\,(1_0,1_1,(a^{-k})_0,b_1)$
 & \ if \ $k \equiv i$ mod $n$, \\[+4pt]
or \ $(1_0,1_1,(ba^j)_0,(a^k)_1)$, $\,(1_0,1_1,(ba^j)_0,b_1)\,$ and $\,(1_0,1_1,(a^{-k})_0,b_1)$
 & \ if \ $k \equiv j$ mod $n$, \\[+4pt]
or \ $(1_0,1_1,b_0,(ba^i)_1)$, $\,(1_0,1_1,b_0,(a^k)_1)\,$ and $\,(1_0,1_1,(a^{-k})_0,(ba^i)_1)$
 & \ if \ $k \equiv -i$ mod $n$, \\[+4pt]
or \ $(1_0,1_1,b_0,(ba^j)_1)$, $\,(1_0,1_1,b_0,(a^k)_1)\,$ and $\,(1_0,1_1,(a^{-k})_0,(ba^j)_1)$
 & \ if \ $k \equiv -j$ mod $n$, \\[+4pt]
or \ $(1_0,1_1,(ba^j)_0,(ba^i)_1)$, $\,(1_0,1_1,(ba^j)_0,(a^k)_1)\,$ and $\,(1_0,1_1,(a^{-k})_0,(ba^i)_1)$
 & \ if \ $k \equiv j-i$ mod $n$, \\[+4pt]
or \ $(1_0,1_1,(ba^i)_0,(ba^j)_1)$, $\,(1_0,1_1,(ba^i)_0,(a^k)_1)\,$ and $\,(1_0,1_1,(a^{-k})_0,(ba^j)_1)$
 & \ if \ $k \equiv i-j$ mod $n$. \\[-5pt]
\end{tabular}\end{center}

\noindent
Note that no two of the above six congruences involving $k$ can occur simultaneously,
by the restrictions we have on $i$, $j$ and $k$.
Hence up to reversal, the number of $4$-cycles through any given edge is $3$ or $6$.
\smallskip

Next, up to reversal the $4$-cycles through $\{1_0,b_1\}$ are $(1_0,b_1,b_0,1_1)$
and $(1_0,b_1,(ba^k)_0,(a^k)_1)$, plus \\[-15pt]

\begin{center}\begin{tabular}{ll}
$(1_0,b_1,(ba^i)_0,(ba^i)_1)$, $\,(1_0,b_1,(ba^i)_0,1_1)\,$ and $\,(1_0,b_1,(a^{-k})_0,1_1)$
 & \ if \ $k \equiv i$ mod $n$, \\[+4pt]
or \ $(1_0,b_1,(ba^j)_0,(ba^j)_1)$, $\,(1_0,b_1,(ba^j)_0,1_1)\,$ and $\,(1_0,b_1,(a^{-k})_0,1_1)$
 & \ if \ $k \equiv j$ mod $n$, \\[+4pt]
or \ $(1_0,b_1,b_0,(ba^i)_1)$, $\,(1_0,b_1,b_0,(a^k)_1)\,$ and $\,(1_0,b_1,(a^k)_0,(a^k)_1)$
 & \ if \ $k \equiv -i$ mod $n$, \\[+4pt]
or \ $(1_0,b_1,b_0,(ba^j)_1)$, $\,(1_0,b_1,b_0,(a^k)_1)\,$ and $\,(1_0,b_1,(a^k)_0,(a^k)_1)$
 & \ if \ $k \equiv -j$ mod $n$, \\[+4pt]
or \ $(1_0,b_1,(a^i)_0,(ba^i)_1)$
 & \ if \ $2i \equiv 0$ mod $n$, \\[+4pt]
or \ $(1_0,b_1,(a^j)_0,(ba^j)_1)$
 & \ if \ $2j \equiv 0$ mod $n$.
\end{tabular}\end{center}

It follows that the number of $4$-cycles through $\{1_0,b_1\}$ is not $3$ or $6$,
unless $\,0 \equiv 2i$ or $2j$ mod $n\,$ and $\,k \not\equiv \pm(i-j)$ mod $n$.

But now suppose $2i \equiv 0$ mod $n$ and $k \not\equiv \pm(i-j)$ mod $n$.
Then $n$ is even, and $i \equiv \frac{n}{2}$ mod $n$, and up to reversal the $4$-cycles
through $\{1_0,(ba^j)_1\}$ are $(1_0,(ba^j)_1,(ba^j)_0,1_1)$
and $(1_0,(ba^j)_1,(ba^{j+k})_0,(a^k)_1)$, plus \\[-12pt]

\begin{center}\begin{tabular}{ll}
$(1_0,(ba^j)_1,(ba^j)_0,1_1)$, $\,(1_0,(ba^j)_1,(ba^j)_0,(a^k)_1)\,$ and $\,(1_0,(ba^j)_1,(a^k)_0,(a^k)_1)$
 & \ if \ $k \equiv j$ mod $n$, \\[+4pt]
or \ $(1_0,(ba^j)_1,b_0,1_1)$, $\,(1_0,(ba^j)_1,b_0,b_1)\,$ and $\,(1_0,(ba^j)_1,(a^{-k})_0,1_1)$
 & \ if \ $k \equiv -j$ mod $n$. \\[-5pt]  
\end{tabular}\end{center}

\noindent
Hence the number of $4$-cycles through the edge $\{1_0,(ba^j)_1\}$ is $2$ or $5$, contradiction.
The same holds when the roles of $i$ and $j$ are reversed, and so this completes the proof.
\end{proof}

\subsection{A class of normal edge-transitive bi-dihedrants}
In this subsection, we construct a class of normal edge-transitive bi-Cayley graphs over dihedral groups
of degree $5$ or more, and thereby prove there exists a semisymmetric bi-dihedrant of valency $2k$
for every odd integer $k \ge 3$. \medskip

\begin{example}\label{exam1}
{\rm Let $n$ and $k$ be integers with $n \ge 5$ and $k \ge 2$, such that
there exists an element $\ld$ of order $2k$ in $\mz_n^{\,*}$ such that
$$1 + \ld^2 + \ld^4 + \ldots \ld^{2(k-2)}+\ld^{2(k-1)} \equiv 0 \ \hbox{ mod } n.$$
Now let $H$ the dihedral group $D_{n}=\lg\, a, b\ |\ a^n = b^2 = (ab)^2 = 1 \,\rg$ of degree $n$,
and for each $i \in \mz_k$, let
$$c_i = 1+\ld^2+\ld^4+ \dots +\ld^{2(i-1)}+\ld^{2i}\ \ \hbox{ and } \ \ d_i = \ld c_i = \ld+\ld^3+\ld^5+\dots+\ld^{2i-1}+\ld^{2i+1},$$
and then define $\G(n,\ld, 2k)$ as the $2k$-valent bi-Cayley graph ${\rm BiCay}(H, \emptyset, \emptyset,S)$ over $H,$
where 
$$S = S(n, \ld, 2k) = \{a^{c_i} : i \in \mz_k \}\, \cup \,\{ba^{d_i} :  i \in \mz_k \}.$$
It is easy to see that $\G(n,\ld, 2k)$ contains
the $2n$-cycles $(1_0,a_1,a_0,(a^2)_1,(a^2)_0, \ldots, (a^{n-1})_1,(a^{n-1})_0,1_1)$
and $(b_0,b_1,(ba)_0,(ba)_1,(ba^2)_0,(ba^2)_1, \ldots, (ba^{n-1})_0,(ba^{n-1})_1)$,
and the edge $(1_0,b_1)$, so $\G(n,\ld, 2k)$ is connected.

Also it is easy to see that $|S| = 2k$, and $\,c_{k-1} \equiv d_{k-1} \equiv 0$ mod $n,\,$
and $\,1+\ld d_i \equiv c_{i+1}$ mod $n\,$ for all $i \in \mz_k$.
Next let $\a$ be the automorphism of $H$ that takes $(a,b)$ to $(a^\ld, ba).$
Then $S^\a=bS$, and 
$\s_{\a, b}$ is an automorphism of $\G(n, \ld, 2k)$ that fixes the vertex $1_0$ and cyclically permutes
the $2k$ neighbours of $1_0$; indeed $\s_{\a, b}$ takes $(a^{c_i})_1$ to $(ba^{\ld c_i})_1 = (ba^{d_i})_1$,
and $(ba^{d_i})_1$ to $(b^{2}a^{1+\ld d_i})_1 = (a^{c_{i+1}})_1,$ for all $i \in \mz_k$.
Hence in particular, this shows that $\G(n, \ld, 2k)$ is normal edge-transitive.}
\end{example}

The following natural problem arises.\medskip

\f{\bf Problem~A}\
{\em Determine which of the graphs $\,\G(n, \ld, 2k)$ are semisymmetric.}\medskip

We will give some partial answers to this problem, in the situation where $\ld^k\equiv-1$ mod $n$.
\smallskip


\begin{proposition}\label{arc-tran-dihe}
If $k$ is even and $\ld^k\equiv-1$ mod $n$, then $\,\G(n, \ld, 2k)$ is arc-transitive.
\end{proposition}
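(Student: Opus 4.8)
The plan is to deduce this from Proposition~\ref{normal-arc-tran}(a). Indeed, Example~\ref{exam1} already exhibits the automorphism $\s_{\a,b}$, with $\a$ the automorphism of $H=D_n$ sending $(a,b)$ to $(a^\ld,ba)$, which fixes the vertex $1_0$ and cyclically permutes all $2k$ of its neighbours; hence $\G=\G(n,\ld,2k)$ is normal locally arc-transitive, and by Proposition~\ref{normal-arc-tran}(a) it will be arc-transitive as soon as we produce an automorphism $\a'$ of $H$ with $S^{\a'}=S^{-1}$.

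Since every element $ba^j$ of $D_n$ is an involution, $S^{-1}=\{a^{-c_i}:i\in\mz_k\}\cup\{ba^{d_i}:i\in\mz_k\}$, which makes it natural to look for $\a'$ of the form $a\mapsto a^{-1}$, $b\mapsto ba^u$ with $u\in\mz_n$ to be chosen: every such map is an automorphism of $D_n$, it carries each $a^{c_i}$ to $a^{-c_i}\in S^{-1}$, and it carries each $ba^{d_i}$ to $ba^{u-d_i}$. So the whole problem reduces to choosing $u$ with $\{u-d_i:i\in\mz_k\}=\{d_i:i\in\mz_k\}$; once that holds, $\a'(S)\subseteq S^{-1}$ together with $|\a'(S)|=|S|=|S^{-1}|$ forces $S^{\a'}=S^{-1}$.

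The heart of the matter — and where both hypotheses are used — is the claim that $d_i+d_{i+k/2}$ is independent of $i\in\mz_k$; here $k/2\in\mz$ because $k$ is even, and the subscripts of the $d_i$ are read modulo $k$, which is legitimate since $\ld^{2k}\equiv1$ and $c_{k-1}\equiv0$ make the sequence $(d_i)$ genuinely $k$-periodic. From $d_i=\ld+\ld^3+\dots+\ld^{2i+1}$ one gets $d_{i+1}-d_i\equiv\ld^{2i+3}\pmod n$, whence
\[
(d_{i+1}+d_{i+1+k/2})-(d_i+d_{i+k/2})\ \equiv\ \ld^{2i+3}+\ld^{2i+3+k}\ \equiv\ \ld^{2i+3}\bigl(1+\ld^{k}\bigr)\ \equiv\ 0\pmod n,
\]
using $\ld^{k}\equiv-1$. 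Taking $u:=d_0+d_{k/2}$ then gives $u-d_i\equiv d_{i+k/2}\pmod n$ for all $i$, so $\a'$ sends $ba^{d_i}$ to $ba^{d_{i+k/2}}\in S^{-1}$; since $i\mapsto i+k/2$ is a bijection of $\mz_k$, we conclude $S^{\a'}=S^{-1}$, and Proposition~\ref{normal-arc-tran}(a) then gives that $N_{\Aut(\G)}(R(H))$, hence $\Aut(\G)$, is arc-transitive on $\G$.

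I expect the only real difficulty to be bookkeeping: keeping the three moduli straight (subscripts $i$ of $c_i,d_i$ in $\mz_k$, exponents of $\ld$ in $\mz_{2k}$, exponents of $a$ in $\mz_n$) and checking that the telescoping identity $d_{i+1}-d_i\equiv\ld^{2i+3}$ survives the wrap-around from $i=k-1$ to $i=0$ — which it does precisely because $c_{k-1}\equiv0$, equivalently $1+\ld^{k}\equiv0$, a fact already recorded in Example~\ref{exam1} and itself a consequence of $\ld^{k}\equiv-1$.
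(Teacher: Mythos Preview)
Your proof is correct and follows essentially the same approach as the paper: both construct the automorphism $\a'$ of $H$ sending $a\mapsto a^{-1}$, $b\mapsto ba^u$ with $u=d_0+d_{k/2}=d_{(k-2)/2}$ (these agree modulo $n$ since $d_0+d_{k/2}-d_{k/2-1}=\ld(1+\ld^k)\equiv0$), and verify $S^{\a'}=S^{-1}$ via Proposition~\ref{normal-arc-tran}. Your telescoping argument showing $d_i+d_{i+k/2}$ is constant is a cleaner way of packaging the same computation that the paper does by explicit case analysis on whether $2i+1<k$ or $2i+1>k$.
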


\begin{proof}
Let $\b$ be the automorphism of $H$ taking $(a,b)$ to $(a^{-1},ba^\ell)$
where $\ell = d_{(k-2)/2} = \ld+\ld^3+\ldots+\ld^{k-1}$.
Clearly $\b$ has order $2$.
Also $(a^{c_i})^\b = a^{-c_i}\in S^{-1}$ for all $i \in \mz_k$,
while $(ba^{d_i})^\b = ba^{\ell-d_i}$ for all $i \in \mz_k$,
and because  $\ld^k \equiv -1$ mod $n\,$ we find that if $\,0 < 2i+1 < k\,$ then
\\[+6pt]
${}$\hskip 0.6cm
\begin{tabular}{ll}
$\ell-d_i \hskip -4pt$ & $\equiv \  (\ld+\ld^3+\ldots+\ld^{k-1}) - ( \ld+\ld^3+\ldots+\ld^{2i-1}) $ \\
 & $\equiv \  \ld^{2i+1}+\ld^{2i+3}+\ldots+\ld^{k-1} $ \\
 & $\equiv \  \ld^{2i+1}+\ld^{2i+3}+\ldots+\ld^{k-1}
 + (\ld+\ld^{k+1}) + (\ld^3+\ld^{k+3}) + \ldots + (\ld^{2i-1}+ \ld^{k+2i-1}) $ \\
 & $\equiv \  \ld+\ld^3+ \ldots +\ld^{2i-1}+\ld^{2i+1}+\ld^{2i+3}+\ldots+\ld^{k-1}+\ld^{k+1}+\ld^{k+3}+\ldots+\ld^{k+2i-1} $ \\
 & $\equiv \  d_{(k+2i)/2}\,$ mod $n$,\\[+6pt]
\end{tabular}
\par\noindent
while if $\,2i+1 > k\,$ then
\\[+6pt]
${}$\hskip 0.6cm
\begin{tabular}{ll}
$\ell-d_i \hskip -4pt$ & $\equiv \  (\ld+\ld^3+\ldots+\ld^{k-1}) - ( \ld+\ld^3+\ldots+\ld^{2i-1}) $ \\
 & $\equiv \  -\ld^{k+1}-\ld^{k+3}+\ldots-\ld^{2i-1}
 \ \equiv \  \ld+\ld^3+\ldots+\ld^{2i-k-1}
 \ \equiv \  d_{(2i-k)/2}\,$ mod $n$, \\[+6pt]
\end{tabular}
\par\noindent
and so $(ba^{d_i})^\b = ba^{\ell-d_i} = ba^{d_{(2i\pm k)/2}}  = (ba^{d_{(2i\pm k)/2}})^{-1} \in S^{-1}$ for all $i \in \mz_k$.
Hence the automorphism $\b$ takes the set $S = S(n, \ld, 2k)$ to $S^{-1}$, and so by Proposition~\ref{normaliser},
we find that $\G(n, \ld, 2k)$ is vertex-transitive, and therefore arc-transitive.\end{proof}

\begin{proposition}\label{semi-dihe}
If $k$ is odd and $\ld^k\equiv-1$ mod $n$, then $\,\G(n, \ld, 2k)$ is semisymmetric.
\end{proposition}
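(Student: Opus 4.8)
The plan is as follows. By Example~\ref{exam1}, the graph $\G=\G(n,\ld,2k)=\BiCay(H,\emptyset,\emptyset,S)$, where $H=D_n$, is connected and edge-transitive, and since $R=L=\emptyset$ it is bipartite with the two orbits $H_0,H_1$ of $R(H)$ as its parts. As the bipartition of a connected bipartite graph is intrinsic, $\G$ is semisymmetric exactly when no automorphism of $\G$ interchanges $H_0$ and $H_1$, so it suffices to show that $\G$ is not vertex-transitive. The first and algebraically the main step is to prove that there is no $\alpha\in\Aut(H)$ with $S^\alpha=S^{-1}$, which by Proposition~\ref{normal-arc-tran}(b) gives that $X:=N_{\Aut(\G)}(R(H))$ already acts semisymmetrically on $\G$.

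To do this, write an arbitrary $\alpha\in\Aut(D_n)$ as $a\mapsto a^r$ (with $r\in\mz_n^{\,*}$) and $b\mapsto ba^t$ (with $t\in\mz_n$). Reflections in $D_n$ are involutions, so the reflection parts of $S$ and $S^{-1}$ coincide; recalling $S=\{a^{c_i}:i\in\mz_k\}\cup\{ba^{d_i}:i\in\mz_k\}$, the equation $S^\alpha=S^{-1}$ is thus equivalent to the two set equalities $\{rc_i:i\in\mz_k\}=\{-c_i:i\in\mz_k\}$ and $\{t+rd_i:i\in\mz_k\}=\{d_i:i\in\mz_k\}$ in $\mz_n$. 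Using $d_i=\ld c_i$, dividing the second by $\ld$ and substituting the first, one gets $\{u-c_i:i\in\mz_k\}=\{c_i:i\in\mz_k\}$ with $u=t\ld^{-1}$; multiplying through by $\ld^2-1$ and using the identity $(\ld^2-1)c_i=\ld^{2(i+1)}-1$ replaces $\{c_i:i\in\mz_k\}$ by $\{g-1:g\in G\}$, where $G=\langle\ld^2\rangle\le\mz_n^{\,*}$ has exactly $k$ elements (since $\ld$ has order $2k$). A short rearrangement then yields $w-G=G$ in $\mz_n$ for some $w$. But then $g\mapsto w-g$ is an involution of the $k$-element set $G$, and since $k$ is \emph{odd} it fixes some $g_0\in G$, so $w=2g_0$; on the other hand, summing $w-g$ over $g\in G$ (and using $w-G=G$) gives $kw=2\sum_{g\in G}g=2(1+\ld^2+\cdots+\ld^{2(k-1)})\equiv0\pmod{n}$ by the defining hypothesis on $\ld$, hence $2kg_0\equiv0\pmod{n}$, and as $g_0$ is a unit this forces $n\mid 2k$, contradicting $2k\mid\varphi(n)<n$. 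So no such $\alpha$ exists. This is precisely where the oddness of $k$ enters: when $k$ is even the involution $g\mapsto w-g$ need not have a fixed point, and indeed such an $\alpha$ does exist, as in Proposition~\ref{arc-tran-dihe}. (Note this argument does not use $\ld^k\equiv-1$, so that hypothesis must be invoked at the next stage.)

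By Proposition~\ref{normal-arc-tran}(b), $X=N_{\Aut(\G)}(R(H))$ then acts semisymmetrically on $\G$, i.e.\ does not interchange $H_0$ and $H_1$. It remains to upgrade this to the full automorphism group, that is, to exclude automorphisms of $\G$ outside $X$ that swap $H_0$ and $H_1$. The plan is to prove that $R(H)$ is normal in $A:=\Aut(\G)$, so that $A=X$. Here one would exploit that $R(\langle a\rangle)$ is a semiregular cyclic subgroup of order $n$ with four orbits (so $\G$ is a tetracirculant) and that $X_{1_0}$ contains the cyclic subgroup $\langle\sigma_{\alpha,b}\rangle\cong C_{2k}$ acting regularly on $\G(1_0)$: an automorphism $g$ swapping $H_0$ and $H_1$ would conjugate $R(H)$ onto another semiregular subgroup with the same pair of orbits, and a structural analysis of $\langle R(H),R(H)^g\rangle$ acting on $V(\G)$ with $\{H_0,H_1\}$ as a block system, together with the regular local action — the stage at which $\ld^k\equiv-1$ is expected to be needed — should force $R(H)\trianglelefteq A$, contradicting the previous paragraph. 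If no clean group-theoretic argument presents itself, the alternative is to separate the two parts of $\G$ combinatorially, for example by comparing the number of $4$-cycles (or $6$-cycles) through a vertex of $H_0$ with the number through a vertex of $H_1$, computed from the explicit form of $S$; any discrepancy precludes a part-swapping automorphism.

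The real obstacle is this last step: Propositions~\ref{normaliser}--\ref{normal-half-arc} describe only $N_{\Aut(\G)}(R(H))$, so the absence of ``exceptional'' automorphisms of $\G$ must be argued separately, either via a normality result for these particular bi-dihedrants or via a direct counting argument. By contrast, once the first step is reduced to the observation that an involution of a set of odd cardinality has a fixed point, the non-existence of an $S$-inverting automorphism of $D_n$ is entirely routine.
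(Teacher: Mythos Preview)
Your first step is correct and rather elegant: the reduction of ``$S^\a=S^{-1}$ for some $\a\in\Aut(D_n)$'' to an equation $w-G=G$ in $\mz_n$ with $G=\langle\ld^2\rangle$ of odd size $k$, followed by the fixed-point/summation trick, is clean and indeed does not use $\ld^k\equiv-1$. This already proves more than the paper states at this stage.

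The genuine gap is your second step, and the problem is worse than ``incomplete'': your proposed route of proving $R(H)\trianglelefteq A=\Aut(\G)$ cannot succeed here. The hypothesis $\ld^k\equiv-1$ forces the graph to be \emph{unworthy}: with $\ell=d_{(k-3)/2}$ one checks (using $\ld^k\equiv-1$) that $(ba^{\ell})_0$ is fixed by the automorphism $\s_{\a,b}$ that cyclically permutes $\G(1_0)$, so $1_0$ and $(ba^{\ell})_0$ have identical neighbourhoods. The transposition swapping these twins (and fixing everything else) is then an automorphism of $\G$ that does \emph{not} normalise $R(H)$; hence $A$ is strictly larger than $X$ and $R(H)$ is not normal in $A$. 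So the normality argument is a dead end, and a cycle-count along the lines you mention would have to detect asymmetry between $H_0$ and $H_1$ in the presence of these extra twin-swap automorphisms --- which you have not attempted.

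The paper's proof exploits exactly this unworthiness rather than fighting it. One shows (using $\ld^k\equiv-1$) that $1_0$ and $(ba^{\ell})_0$ are twins, so the set $B$ of vertices sharing $\G(1_0)$ is an $\Aut(\G)$-block with $|B|=2j$ for some $j\mid k$, and the corresponding subgroup $H_B\le H$ is dihedral containing $ba^{\ell}$. If $\G$ were vertex-transitive, the analogous block $C$ at $1_1$ would give a dihedral $H_C$ of the same order; using the automorphisms $\s_{\a,b}^{\,k}$ and $\s_{\a,b}R(b)$ (both preserving $C$) one forces $ba^{\ell}$ and $ba^{-\ell}$, hence $a^{2\ell}$ and then $a^{2\ell(1-\ld)}=a^2$, into $H_C$. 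But $|H_C\cap\langle a\rangle|=j\le k<n/2$, contradiction. Thus no part-swapping automorphism exists. This is where the hypothesis $\ld^k\equiv-1$ is actually spent, and it bypasses entirely the question of whether $R(H)\trianglelefteq A$.
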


\begin{proof}
First, let $\ell = d_{(k-3)/2} = \ld+\ld^3+\ldots+\ld^{k-2}$. Then since $k$ is odd and $\ld^k \equiv -1$ mod $n$, we have
$$0 \ \equiv \ 1+\ld^2+\dots+\ld^{2(k-1)} 
\ \equiv \ c_{(k-1)/2}+\ld^{k}d_{(k-3)/2}
\ \equiv \ c_{(k-1)/2}-d_{(k-3)/2} \ \hbox{ mod } n,
$$
so $\,1+\ld\ell \,\equiv\, c_{(k-1)/2} \,\equiv\,  d_{(k-3)/2} \,\equiv\, \ell\,$ mod $n$,
and therefore  $\,(ba^{\ell})_0^{\s_{\a, b}} = ((ba^{\ell})^\a)_0 = (ba^{1+\ld\ell})_0 = (ba^{\ell})_0.\,$
Hence $(ba^{\ell})_0$ is fixed by $\s_{\a, b}$, which cyclically permutes the $2k$ neighbours of $1_0$,
and it follows that those $2k$ neighbours of $1_0$ are also the $2k$ neighbours of $(ba^{\ell})_0$.

Before proceeding, we note that since $1+\ld \equiv \ell$ mod $n$, we also have \\[+3pt]
$2\ell \, \equiv \,  1+\ell+\ld\ell
\ \equiv \ 1+(\ld+\ld^3+\ldots+\ld^{k-2})+(\ld^2+\ld^4+\ldots+\ld^{k-1})
\ \equiv \ 1+\ld+\ld^2+\ld^3+\ldots+\ld^{k-2}+\ld^{k-1}, $ \\[+3pt]
and therefore \
$2\ell(1-\ld) \, \equiv \, (1+\ld+\ld^2+\ld^3+\ldots+\ld^{k-2}+\ld^{k-1})(1-\ld)  \ \equiv \ 1-\ld^k  \ \equiv \  2\,$ mod $n$. \\[-10pt]

Next, let $B$ be the set of all vertices of $\G$ having the same neighbourhood as $1_0$.
Note that $B$ contains both $1_0$ and $(ba^{\ell})_0$,  and  therefore $|B| \ge 2$.
We claim that $B$ is a block of imprimitivity for $\Aut(\G)$ on~$V(\G)$.
To see this, note that if $\s\in\Aut(\G)$ then all vertices of $B^\s$ must have the same neighbourhood
(since the same holds for vertices in $B$), and hence if $B\cap B^\s\neq \emptyset$,
then every vertex in $B^\s$ has the same neighbourhood as $1_0$, so $B^\s \subseteq B$ and this gives $B^\s=B$.
Hence in particular, $B$ is a block of imprimitivity for $R(H) \le \Aut(\G)$ on $H_0$,
and so $H_B = \{\, x \in H \ | \ x_0 \in B \,\}$ is a subgroup of $H$, with order $|H_B| = |B|$
because $R(H)$ acts regularly on~$H_0$.
But also $B \subseteq \G(1_1)$, and so $B$ is a block of imprimitivity for $\Aut(\G)_{1_1}$ on $\G(1_1)$ as well.
Hence $|B|$ divides $|\G_{1}(1_1)|=2k$. 
Moreover, $B$ contains $(ba^{\ell})_0$, so $H_B$ contains the involution $ba^{\ell}$,
and hence $H_B$ is dihedral, of order $|H_B| = |B| = 2j$ for some $j$ dividing $k$.

Now suppose that $\G$ is vertex-transitive.
Then some automorphism $\theta$ of $\G$ takes $1_0$ to $1_1$, and it follows that $C = B^{\theta}$
is the set of all vertices of $\G$ having the same neighbourhood as $1_1$,
and $C$ is a  a block of imprimitivity for $\Aut(\G)_{1_0}$ on $\G(1_0),$
and the subgroup $H_C = \{\, y \in H \ | \ y_1 \in C \,\}$ of $H$ is dihedral, of order $|C| = |B| = 2j$.
In particular, since the automorphism $\s_{\a, b}$ fixes $1_0$ and cyclically permutes
the $2k$ neighbours of $1_0$, the block $C$ is preserved by $\s_{\a, b}^{\ 2k/|C|} = \s_{\a, b}^{\ k/j},$
and hence also by $\s_{\a, b}^{\ k}$.  Accordingly, $C$ contains the image of $1_1$ under $\s_{\a, b}^{\ k}$,
namely $(ba^{(k-3)/2}))_1 = (ba^{\ell})_1$, and therefore $H_C$ contains $ba^{\ell}$.

On the other hand, consider the automorphism $\tau = \s_{\a, b}R(b)$.
This takes $h_1$ to $(bh^{\a}b)_1$ for all $h \in H$, and so its effect on $H_1$ is the same as the
permutation induced by the automorphism $\psi$ of $H$ that takes $a$ to $\,ba^{\a}b = ba^{\ld}b = a^{-\ld}\,$
and $b$ to $\,bb^{\a}b = b(ba)b =  ba^{-1}.\,$
In particular, $\tau$ fixes $1_1$, and so $\tau$ preserves $C$ (the set of all vertices
of $\G$ having the same neighbourhood as $1_1$), and $\psi$ preserves $H_C$.
It follows that $H_C$ contains $(ba^{\ell})^{\psi}  = ba^{-1-\ld\ell} = ba^{-\ell}$,
and hence also $(ba^{-\ell})^{-1}ba^{\ell} = a^{2\ell}$,
and hence also $(a^{2\ell})^{1-\ld} = a^{2\ell(1-\ld)} = a^2$, because $(1-\ld)2\ell \equiv 2$ mod $n$.
But $H_C$ has order $2j$, which divides $2k$ and hence divides $|\mz^{\,*}| = \phi(n)$,
and so $|H_C \cap \lg a \rg| = j \le k \le \phi(n)/2 < n/2$. Thus $a^2$ cannot lie in $H_C$,
and which is a contradiction.
\end{proof}

\f{\bf Remarks}: \ 
We believe that the hypothesis  $\ld^k \equiv -1$ mod $n$ in Proposition \ref{semi-dihe}
is not actually required, and that Proposition \ref{semi-dihe} can be extended to all cases other
than those covered by Proposition \ref{arc-tran-dihe}.  In other words, we believe that
$\G(n, \ld, 2k)$ is arc-transitive if and only if $k$ is even and $\ld^k \equiv -1$ mod $n$.
Furthermore, we believe that if $\ld^k \not\equiv -1$ mod $n$, then the graph $\G = \G(n, \ld, 2k)$ is
not just semisymmetric, but edge-regular (or equivalently, the stabiliser in $\Aut(\G)$ of any edge is trivial).
This certainly holds in all cases where $n \le 300$, such as $(n, \ld, k) = (21,2,3)$, $(68,9,4)$ or $(35,2,6)$,
as shown by a computation using {\sc Magma}~\cite{BCP}.
In the next subsection, we will prove it holds whenever $k = 3$.

\subsection{The case {\em k} = 3}
By Theorem~\ref{th-valency5}, every semisymmetric bi-dihedrant has valency at least $6$,
and therefore $3$ is the smallest possible value of $k$ of interest in this section.
Also by Proposition~\ref{semi-dihe}, we know that when $k=3$ the graph $\G(n, \ld, 6)$ is semisymmetric
if $\ld^3\equiv-1$ mod $n$.
In this subsection, we prove that $\G(n, \ld, 6)$ is edge-regular (and therefore semisymmetric)
whenever $\ld^3\not\equiv-1$ mod $n$, and this gives a complete solution for Problem~A in the case $k=3$.




\begin{theorem}\label{edge-regular-graphs}
The graph $\,\G(n, \ld, 2k)$ is semisymmetric whenever $k = 3$,
and moreover, if $\,k = 3$ and $\ld^3\not\equiv-1$ mod $n$, then $\,\G(n, \ld, 2k)$ is edge-regular,
with cyclic vertex-stabiliser.
\end{theorem}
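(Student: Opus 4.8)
The plan is to work throughout with $\G = \G(n,\ld,6) = \BiCay(H,\emptyset,\emptyset,S)$ where $H = D_n = \lg a,b\rg$ and $S = \{1, a^{\ld^2+1}, ba^{\ld}, ba^{\ld^3+\ld}, a^{-1}, b\}$ after rewriting the $c_i$ and $d_i$ for $k=3$ (using $c_2 \equiv d_2 \equiv 0$ mod $n$, so $c_0 = 1$, $c_1 = 1+\ld^2$, $d_0 = \ld$, $d_1 = \ld+\ld^3$). The semisymmetry claim is already handled by Proposition~\ref{semi-dihe} in the case $\ld^3 \equiv -1$, so the first task is to establish semisymmetry when $\ld^3 \not\equiv -1$ mod $n$; but this will fall out of the stronger edge-regularity statement, since a graph with trivial edge-stabiliser that is edge-transitive but not vertex-transitive (by Example~\ref{exam1} it is normal edge-transitive, and we will show it is not vertex-transitive) is semisymmetric. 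So the real content is: \emph{when $\ld^3 \not\equiv -1$ mod $n$, the stabiliser $A_{\{1_0,1_1\}}$ is trivial, and $\G$ is not vertex-transitive.}

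For the edge-stabiliser, I would proceed combinatorially via the local structure around the edge $\{1_0,1_1\}$. Let $A = \Aut(\G)$ and let $T = A_{\{1_0,1_1\}}$. Since $\G$ is bipartite with parts $H_0, H_1$, and edge-transitive, $T$ has order $|A_{1_0}|\cdot|\G(1_0)|/|A| \cdot |A|/|E|$... more usefully, $T$ either fixes both $1_0$ and $1_1$ or swaps them. First I would show $T$ cannot swap $1_0$ and $1_1$: an element swapping them would, together with Proposition~\ref{normaliser}, force an automorphism $\gamma$ of $H$ with $S^\gamma = S^{-1}$, and I would rule this out by the same kind of involution-counting argument as in the proof of Theorem~\ref{th-valency5} — $S$ contains exactly three involutions ($b, ba^{\ld}, ba^{\ld^3+\ld}$) and the non-involutions $a^{\ld^2+1}, a^{-1}$, and one checks that no automorphism $(a,b)\mapsto (a^{\pm 1}, ba^t)$ can both permute the three involutions and invert $\{a^{\ld^2+1}, a^{-1}\}$ as a set unless $\ld^3 \equiv -1$ mod $n$; this is where the hypothesis enters. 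Hence $T = A_{1_0} \cap A_{1_1}$, and it suffices to show this group fixes $\G(1_0) \cup \G(1_1)$ pointwise and then use connectedness.

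So the core step is to show that an automorphism fixing $1_0$ and $1_1$ fixes every neighbour of each. Here I would count $4$-cycles and $6$-cycles through each arc emanating from $1_0$ (respectively $1_1$) and show these counts distinguish the three relevant arcs in each "type" (the arc $(1_0,1_1)$ versus arcs $(1_0, s_1)$ for $s$ an involution versus $s$ a non-involution), so that any automorphism fixing $1_0,1_1$ must preserve each arc individually; then repeat locally. Concretely: from $1_0$ the six neighbours split as $\{1_1, b_1, (ba^{\ld})_1, (ba^{\ld^3+\ld})_1\}$ coming from involutions in $S$ (together with $1$) and $\{(a^{\ld^2+1})_1, (a^{-1})_1\}$ coming from non-involutions; the two non-involution neighbours are interchanged by $\s_{\a,b}^{\,k}$-type elements only in restricted circumstances, and I would pin them down by showing the number of $4$-cycles (or $6$-cycles) through $\{1_0, (a^{\ld^2+1})_1\}$ differs from that through $\{1_0,(a^{-1})_1\}$ except when a small set of congruences in $\ld$ hold mod $n$ — and then handle those few exceptional $(n,\ld)$ by direct inspection (they are finite in number, or reduce again to $\ld^3 \equiv -1$). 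The \textbf{main obstacle} I anticipate is exactly this bookkeeping: enumerating $4$-cycles and $6$-cycles through an arbitrary edge of $\G(n,\ld,6)$ produces a proliferation of cases indexed by coincidences among the exponents $0, \pm1, \ld, \ld^{-1}, \ld^2+1, \ld^3+\ld, \ldots$ mod $n$, and one must verify that no genuine symmetry is hiding among them — i.e. that the only $(n,\ld)$ for which the cycle-counts fail to be arc-distinguishing are those with $\ld^3 \equiv -1$ mod $n$. Once the neighbours of $1_0$ and $1_1$ are each fixed, a standard connectedness-and-induction argument (as in the proof of Lemma~\ref{girth-number}(a)) shows $T$ fixes $V(\G)$ pointwise, so $T = 1$; combined with Example~\ref{exam1} giving a cyclic group $\lg \s_{\a,b}\rg$ of order $2k = 6$ in $A_{1_0}$ acting regularly on $\G(1_0)$ (which it does, being a $2k$-cycle on the $2k$ neighbours) and with trivial point-stabiliser on the next layer, we get $A_{1_0} = \lg \s_{\a,b}\rg \cong C_6$, giving the cyclic vertex-stabiliser; and edge-regularity is then immediate from $|T| = 1$, while non-vertex-transitivity follows since an automorphism swapping the parts would lie in $T$ swapping $1_0,1_1$, already excluded. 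Finally, semisymmetry in the remaining range $\ld^3 \equiv -1$ is Proposition~\ref{semi-dihe}, and semisymmetry when $\ld^3 \not\equiv -1$ follows from edge-regularity plus non-vertex-transitivity, completing the proof.
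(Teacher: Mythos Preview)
Your overall scaffold is reasonable, but there are two genuine gaps that prevent the argument from going through as written.

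First, your opening move --- ruling out an automorphism $\theta$ swapping $1_0$ and $1_1$ by appealing to Proposition~\ref{normaliser} to force $S^\gamma = S^{-1}$ for some $\gamma \in \Aut(H)$ --- presupposes that $\theta$ normalises $R(H)$. At this stage you have no reason to believe $R(H) \lhd \Aut(\G)$, so the proposition simply does not apply to $\theta$. In the paper this step comes at the very end: only after one has established $A_{1_01_1} = 1$, and hence $A^* = R(H)\rtimes\lg\s_{\a,b}\rg$ with $\lg R(a)\rg$ characteristic in $A^*$, can one argue that a hypothetical swapping involution normalises $\lg R(a)\rg$ and then derive a contradiction by tracking its effect on $R(a)$, $R(b)$ and $\s_{\a,b}$.

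Second, and more seriously, your plan to show that $A_{1_01_1}$ fixes each neighbour of $1_0$ by ``counting $4$- and $6$-cycles through each arc emanating from $1_0$'' cannot succeed: the element $\s_{\a,b}$ already cycles the six neighbours of $1_0$ transitively, so any cycle count through a single arc $(1_0,s_1)$ is constant across all six and distinguishes nothing. What the $4$- and $6$-cycle analysis \emph{can} give you --- and what the paper actually extracts --- is only that $A_{1_01_1}$ preserves the two size-$3$ blocks $\Delta_1 = \{(a^{c_i})_1\}$ and $\Phi_1 = \{(ba^{d_i})_1\}$, together with a pairing between $\G(1_0)\setminus\{1_1\}$ and $\G(1_1)\setminus\{1_0\}$. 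That bounds $|A_{1_0}|$ by $72$ and gives faithfulness on $\G(1_0)$, but does not by itself force $A_{1_01_1}=1$.

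The paper closes this gap with an ingredient absent from your sketch. From $|A_{1_0}|\mid 72$ it extracts a cyclic normal Hall $\{2,3\}'$-subgroup $N$ of $A^* = R(H)A_{1_0}$, and uses $N$ (with some extra work when $3\mid n$) to show that $A_{1_01_1}$ preserves each of the four $\lg R(a)\rg$-orbits $H_{0c}$, $H_{0d}$, $H_{1c}$, $H_{1d}$. The subgraphs induced on $H_{0c}\cup H_{1c}$ and on $H_{0c}\cup H_{1d}$ are then connected trivalent edge-transitive bi-Cayley graphs over $\lg a\rg\cong C_n$, and since $n\ge 21$ these are $1$-arc-regular by Corollary~\ref{cyclic}. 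Arc-regularity of these sub-bicirculants forces $A_{1_01_1}$ to act trivially on them, and combined with the $4$-cycle pairing this finally yields $A_{1_01_1}=1$. This appeal to the $1$-arc-regularity of the two trivalent sub-bicirculants is the decisive mechanism; purely local cycle bookkeeping, however carefully done, will not pin the stabiliser down.
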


\begin{proof}
Let $\G=\G(n, \ld, 2k)$ and $A=\Aut(\G)$.
We know from Proposition~\ref{semi-dihe} that $\G$ is semisymmetric whenever $\ld^3\equiv-1$ mod $n$,
and hence in what follows, we will assume that $\ld^3\not\equiv-1$ mod $n$.

The smallest value of $n$ for which this happens is $21$, with $\ld = \pm 2$ or $\pm 10$ (in $\mz_{21}$),
and the next smallest $n$ is $39$, with $\ld = \pm 4$ or $\pm 10$ (in $\mz_{39}$).
Note that $n$ must be odd, for otherwise $\ld$ would be odd but then $c_{k-1} = c_2 = 1 + \ld^2 + \ld^4$
could not be $0$ mod $n$.

By considering $4$- and $6$-cycles that contain the edge $\{1_0,1_1\}$, we will prove that
the stabiliser $A_{1_{0}1_{1}}$ of the arc $(1_{0},1_{1})$ is trivial, and then that the stabiliser $A_{\{1_{0},1_{1}\}}$
of the edge $\{1_{0},1_{1}\}$ is trivial, so that $\G$ is semisymmetric, and edge-regular.
\medskip

But first, we will set some notation for later use. Define
\\[+4pt]
$\, H_{0c}=\{ (a^i)_0 : \, i \in \mz_n \}$, \ $\, H_{0d}=\{ (ba^i)_0 : \, i \in \mz_n \}$, \
$\, H_{1c}=\{ (a^i)_1 : \, i \in \mz_n \}\,$ \  and \ $\, H_{1d}=\{ (ba^i)_1 : \, i \in \mz_n \}$.
\\[+4pt]
These form a partition of $V(\G)$ into four subsets of size $n$,
with $H_0 = H_{0c }\cup H_{0d}$ and $H_1 = H_{1c }\cup H_{1d}$,
and they are blocks of imprimitivity for $R(H)\rtimes\lg\s_{\a, b}\rg$ on $V(\G)$,
with $\s_{\a, b}$ preserving each of $H_{0c}$ and $H_{0d}$, and interchanging $H_{1c}$ with $H_{1d}$.
In fact both $R(a)$ and $\s_{\a, b}^{\ 2}$ preserve these four subsets, while all of $R(b)$, $\s_{\a, b}$
and $\s_{\a, b}R(b)$ do not, and it follows that the kernel of the action of $R(H)\rtimes\lg\s_{\a, b}\rg$
on $\{H_{0c}, H_{0d}, H_{1c}, H_{1d}\}$ is the index $4$ subgroup $M = \lg R(a), \s_{\a, b}^{\ 2} \rg$.
\smallskip

Also define
\\[+4pt]
$\, \Delta_0 = \{(a^{-c_i})_0 : \, i \in \mz_3\}$, \ $\, \Phi_0 = \{(ba^{d_i})_0 : \, i \in \mz_3\}$, \
$\, \Delta_1 = \{(a^{c_i})_1 : \, i \in \mz_3\}\,$ \ and \ $\, \Phi_1 = \{(ba^{d_i})_1 : \, i \in \mz_3\}$,
\\[+4pt]
so that $\Delta_0 \cup \Phi_0 = \ G(1_1)$ and $\Delta_1 \cup \Phi_1 = \ G(1_0)$, the neighbourhoods
of the vertices $1_1$  and $1_0$. \smallskip

Next, the assumption that $\ld^3\not\equiv-1$ mod $n$ implies that no power
of $\ld$ is congruent to $-1$ mod $n$,
and so the set $\{-\ld^j : 1 \le j \le 5 \}$ is disjoint from $\{\ld^j : 1 \le j \le 5 \}$.
It follows that the set $\G_2(1_0)$ of vertices at distance $2$ from $1_0$ is the union of
the following three disjoint sets:
\\[+5pt]
${}$\hskip 0.4cm
\begin{tabular}{ll}
$U_1 \hskip -4pt$ & $= \  \{ (a^{c_i-c_j})_0 : \, i,j \in \mz_3, \, i \ne j \,\} \
  = \ \{ (a^\ell)_0 :  \, \ell \in \{ \pm 1, \pm \ld^2, \pm \ld^4 \} \},$ \\[+3pt]
$U_2 \hskip -4pt$ & $= \  \{ (ba^{c_i+d_j})_0 : \, i,j \in \mz_3 \} \
  = \ \{ (ba^\ell)_0 : \, \ell \in \{ 0, 1, \ld, 1+\ld, -\ld^4,  -\ld^5, 1-\ld^5, \ld-\ld^4, -\ld^4-\ld^5 \} \},$ \\[+3pt]
$U_3 \hskip -4pt$ & $= \  \{ (a^{d_i-d_j})_0 : \, i,j \in \mz_3, \, i \ne j \,\} \
  = \ \{ (a^\ell)_0 :  \, \ell \in \{ \pm \ld, \pm \ld^3, \pm \ld^5 \} \}.$ \\[+6pt]
\end{tabular}

Note that each of the vertices in $U_1$ and $U_3$ has only one common neighbour with $1_0$,
while each of the vertices in $U_2$ has two common neighbours with $1_0$,
and hence up to reversal there are only nine $4$-cycles containing $1_{0}$,
namely $(1_0,(a^{c_i})_1,(ba^{c_i+d_j})_0, (ba^{d_j})_1)$ for each pair $(i,j) \in \mz_3 \times \mz_3$.
It follows that the stabiliser $A_{1_0}$ of the vertex $1_{0}$ must preserve the set $U_2$.
Moreover, the arc-stabiliser $A_{1_{0}1_{1}}$ must permute the three edges
$\{(ba^{d_j})_0, (ba^{d_j})_1\}$ among themselves, as these are the only edges between
vertices of $\G(1_1)$ and $\G(1_0)$ that lie in $4$-cycles through $1_0$ and $1_1$.

Therefore $A_{1_{0}1_{1}}$ preserves the subset $\Phi_1$ of $\G(1_0)$
and its complement $\Delta_1 \setminus \{1_1\} = \{(a^{c_i})_1: \, i \in \{0,1\}\}$ in $\G(1_0)\setminus \{1_1\}$,
as well as the subset $\Phi_0$ of $\G(1_1)$ and its complement $\Delta_0 \setminus \{1_0\} = \{(a^{-c_i})_0: \, i \in \{0,1\}\}$
in $\G(1_1)\setminus \{1_0\}$, and it also sets up a pairing between the subsets $\Phi_0$ and $\Phi_1$.
\smallskip

The situation is illustrated in Figure \ref{fig-3}.

\begin{figure}[ht]
\begin{center}
\unitlength 4mm
\begin{picture}(30,20.5)
{\footnotesize\put(2, 10){\circle*{0.4}}\put(0.7,9.8){$1_0$}
\put(2,10){\line(1,1){6}} \put(2,10){\line(3,2){6}}\put(2,10){\line(3,1){6}}
\put(2,10){\line(1,-1){6}} \put(2,10){\line(3,-2){6}}\put(2,10){\line(3,-1){6}}

\put(7.6,16.7){$1_1$}\put(8, 16){\circle*{0.4}} \put(8,16){\line(5,2){10}}\bezier{600}(8, 16)(9,16)(18,19)
 \put(8,16){\line(6,-1){11}}\bezier{600}(8, 16)(7,16)(19,13)\bezier{600}(8, 16)(6,16)(19,12)

\put(7.6,14.6){$a_1$}\put(8, 14){\circle*{0.4}} \put(8,14){\line(5,2){10}}\bezier{600}(8, 14)(9,14)(18,17)
 \bezier{600}(8, 14)(7,14)(19,11)\bezier{600}(8, 14)(7,14)(19,10)\bezier{600}(8, 14)(6,14)(19,9)

\put(6.1,10.4){$(a^{1+\ld^2})_1$}\put(8, 12){\circle*{0.4}}\put(8, 12){\circle*{0.4}} \put(8,12){\line(5,2){10}}\bezier{600}(8, 12)(9,12)(18,15)
 \bezier{600}(8, 12)(7,12)(19,8)\bezier{600}(8, 12)(7,12)(19,7)\bezier{600}(8, 12)(6,12)(19,6)

\put(7.8,7){$b_1$}\put(8, 8){\circle*{0.4}}\put(8,8){\line(5,-2){10}}\bezier{600}(8, 8)(9,8)(18,5)
\bezier{600}(8, 8)(9,8)(19,14)\bezier{600}(8, 8)(9,8)(19,11)\bezier{600}(8, 8)(9,8)(19,8)

\put(7.2,5.0){$(ba^\ld)_1$}\put(8, 6){\circle*{0.4}}\put(8,6){\line(5,-2){10}}\bezier{600}(8, 6)(8,6)(18,3)
\bezier{600}(8, 6)(8,6)(19,13)\bezier{600}(8, 6)(8,6)(19,10)\bezier{600}(8,6)(8,6)(19,7)

\put(6,2.7){$(ba^{\ld+\ld^3})_1$}\put(8, 4){\circle*{0.4}}\put(8,4){\line(5,-2){10}}\bezier{600}(8, 4)(8,4)(18,1)
\bezier{600}(8, 4)(8,4)(19,12)\bezier{600}(8, 4)(8,4)(19,9)\bezier{600}(8,4)(8,4)(19,6)

\put(18, 20){\circle*{0.4}}\put(18.5,20.2){$(a^{-1})_0$}
\put(18, 19){\circle*{0.4}}\put(18.5,19){$(a^{-1-\ld^2})_0$}

\put(18, 18){\circle*{0.4}}\put(18.5,18){$a_0$}
\put(18, 17){\circle*{0.4}}\put(18.5,17){$(a^{-\ld^2})_0$}

\put(18, 16){\circle*{0.4}}\put(18.5,16){$(a^{1+\ld^2})_0$}
\put(18, 15){\circle*{0.4}}\put(18.5,15){$(a^{\ld^2})_0$}

\put(18, 0){\circle*{0.4}}\put(18.5,0){$(a^{\ld^3})_0$}
\put(18, 1){\circle*{0.4}}\put(18.5,1){$(a^{\ld+\ld^3})_0$}

\put(18, 2){\circle*{0.4}}\put(18.5,2){$(a^{-\ld^3})_0$}
\put(18, 3){\circle*{0.4}}\put(18.5,3){$(a^{\ld})_0$}

\put(18, 4){\circle*{0.4}}\put(18.5,4){$(a^{-\ld-\ld^3})_0$}
\put(18, 5){\circle*{0.4}}\put(18.5,5){$(a^{-\ld})_0$}

\put(19, 6){\circle*{0.4}}\put(19.5,6){$(ba^{1+\ld+\ld^2})_0$}
\put(19, 7){\circle*{0.4}}\put(19.5,7){$(ba^{1+\ld^2})_0$}
\put(19, 8){\circle*{0.4}}\put(19.5,8){$(ba^{-\ld^4-\ld^5})_0$}
\put(19, 9){\circle*{0.4}}\put(19.5,9){$(ba)_0$}
\put(19, 10){\circle*{0.4}}\put(19.5,10){$(ba^{1+\ld+\ld^3})_0$}
\put(19, 11){\circle*{0.4}}\put(19.5,11){$(ba^{1+\ld})_0$}
\put(19, 12){\circle*{0.4}}\put(19.5,12){$(ba^{\ld+\ld^3})_0$}
\put(19, 13){\circle*{0.4}}\put(19.5,13){$(ba^{\ld})_0$}
\put(19, 14){\circle*{0.4}}\put(19.5,14){$b_0$}

}
 \end{picture}
\end{center}
\caption{The ball of radius $2$ centered at the vertex $1_0$ in $\G(n,\ld,6)$} \label{fig-3}
\end{figure}
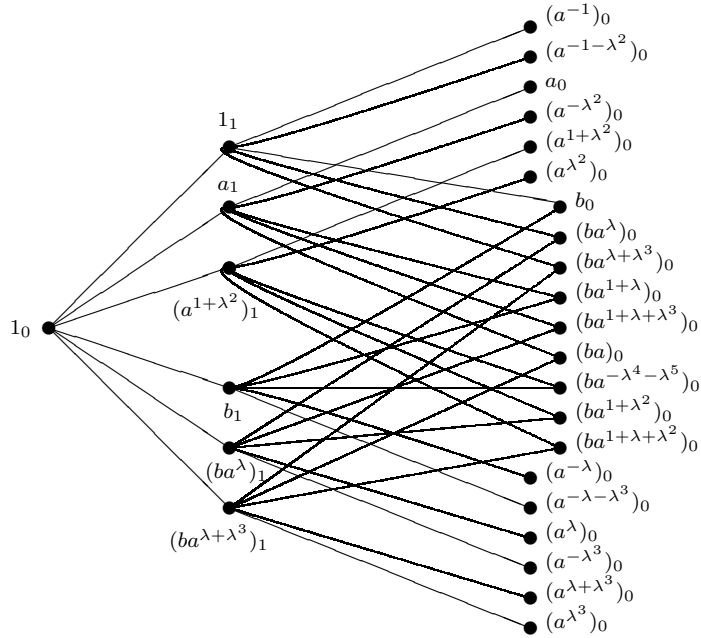

On the other hand, the vertex $1_0$ lies in many $6$-cycles, but
it is not difficult to check that if $n \ge 39$ then a $3$-arc of the form $(u_1,1_0,1_1,w_0)$
with $u_1 = (a^{c_i})_1 \in \Delta_1 \setminus \{1_1\}$ and $w_0 = (a^{-c_j})_0 \in \Delta_0 \setminus \{1_0\}$
lies in no $6$-cycle when $i = j$,
and in just one $6$-cycle $((a^{c_i})_1,1_0,1_1,(a^{-c_j})_0, (a^{c_i-c_j})_1, (a^{c_i-c_j})_0)$ when $i \ne j$.
Also in the smallest cases, where $n = 21$, this $3$-arc lies in three $6$-cycles when $i = j$,
but only one $6$-cycle when $i \ne j$.
It follows that the arc-stabiliser $A_{1_{0}1_{1}}$ must preserve the set of two edges of the form
$\{(a^{c_i})_0, (a^{c_i})_1\}$ for $i \in \{0,1\}$, and this sets up a pairing between the subsets
$\Delta_0 \setminus \{1_0\}$ and $\Delta_1 \setminus \{1_1\}$.

Thus we have a pairing between the five vertices of $\G(1_1) \setminus \{1_0\}$
and the five vertices of $\G(1_0) \setminus \{1_1\}$, given by $(ba^{d_i})_0 \leftrightarrow (ba^{d_i})_1$
for $i \in \mz_3$ and $(a^{-c_i})_0 \leftrightarrow (a^{c_i})_1$ for $i \in \{0,1\}$,
such that $A_{1_{0}1_{1}}$ permutes the corresponding edges among themselves.
By edge-transitivity, a similar thing holds for every edge in $\G$.

\medskip
It now follows that $A_{1_0}$ acts faithfully on $\G(1_0)$.
For suppose $g$ is an automorphism in $A_{1_0}$ that fixes every neighbour of $1_0$ in $\G$.
Then $g$ fixes $1_1$, and also fixes the partner in $\G(1_1)$ of every other vertex in $\G(1_0)$,
and so fixes all of $\G(1_1)$.  By edge-transitivity, the same argument applies to other neighbours
of $1_0$, and it follows that $g$ fixes every vertex at distance $2$ from $1_0$.
Then by induction and connectedness, we find that $g$ fixes all vertices of $\G$, and hence $g$ is trivial.

Also the action of $A_{1_0}$ is imprimitive on $\G(1_0)$,
with two blocks $\Delta_1$ and $\Phi_1 = \Delta_1^{\,\s_{\a,b}}$ of size $3$,
because if $u_1 \in \Delta_1 \cap \Delta_1^{\,g}$ for some $g \in A_{1_0}$,
then $g$ must preserve the set of all vertices of $\G(1_0)$ that lie in $4$-cycles containing
the edge $\{1_0,u_1\}$, namely $\Phi_1 = \{(ba^{d_i})_1 : \, i \in \mz_3\}$,  and hence also $\Delta_1^{\,g} = \Delta_1$.
Thus $A_{1_0}$ is isomorphic to a subgroup of the wreath product $S_3 \wr C_2$,
so $A_{1_0}$ is a $\{2,3\}$-group, of order dividing $72$.  \smallskip

Next, let $A^*$ be the subgroup of $A$ preserving the parts $H_0$ and $H_1$ of $\G$,
so that $A^*$ has index $1$ or $2$ in $A$.  Then $A^*$ contains $R(H)$ and $A_{1_0}$,
and since $R(H)$ acts regularly on each part of $\G$, it follows that $A^*$ is the
complementary product $R(H)A_{1_0}$ of these two subgroups.
If $p$ is any prime divisor of $|A^*|$ such that $p > 3$, then $p$ cannot divide $|A_{1_0}|$
and therefore $p$ divides $|R(H)| = |H| = 2n$, so $p$ divides $n$, and then since $\ld^4+\ld^2+1\equiv 0$ mod $n$,
we find that $p \ne 5$, so $p\ge 7$.  Moreover, for every such $p$, the dihedral subgroup $R(H)$ of order $2n$
has a unique Sylow $p$-subgroup $P$, which is then also a Sylow $p$-subgroup of $A^*$
and is normal in the subgroup $R(H)\rtimes\lg\s_{\a, b}\rg$ of order $12n$ in $A^*$.
Hence the index of its normaliser in $A^*$ divides $|A_{1_0}: \lg \s_{\a, b}\rg|$, which divides $72/6 = 12$,
and because $p \ge 7$, it follows that $P$ is normal in $A^*$.
The product of all such Sylow subgroups is therefore a (cyclic) normal Hall $\{2,3\}'$-subgroup $N$ of $A^*$.

\medskip
We can use this fact to prove that $A_{1_0}$ preserves the set $H_{0c}$ (of all vertices of the form $(a^i)_0$),
and that $A_{1_{0}1_{1}}$ preserves the set $H_{1c}$ (of all vertices of the form $(a^j)_1$).
Before doing that, observe that $\s_{\a, b}$ preserves $H_{0c}$, and that $A_{1_0} = \lg A_{1_{0}1_{1}},\s_{\a, b} \rg$,
since $\s_{\a, b}$ fixes $1_0$ and acts regularly on $\G(1_0)$.
Hence all we need to do is prove that $A_{1_{0}1_{1}}$ preserves both $H_{0c}$ and $H_{1c}$.

If $n$ is coprime to $3$, then the normal subgroup $N$ of $A^*$ must be the cyclic subgroup generated by $R(a)$,
and the four sets above are its orbits, which are therefore blocks of imprimitivity for $A^*$  on $V(\G)$,
and so $A_{1_01_1}$ preserves both $H_{0c}$ and $H_{1c}$.

On the other hand, suppose $n \equiv 0$ mod $3$.  Then $n \not\equiv 0$ mod $9$ (since $\ld^4+\ld^2+1\equiv 0$ mod $n$),
and it follows that $n=3|N|$, and $N$ is the cyclic subgroup generated by $R(a)^3$,
and  $H_{0c} = 1_0^N \cup a_0^N \cup (a^{-1})_0^N$.
Now~if~$x \in A_{1_01_1}$, then $(1_0^N)^x = (1_0^{\, x})^N =  1_0^N \subseteq H_{0c}$,
and  by our earlier observations, $x$ preserves the set $\Delta_1 \setminus \{1_1\} = \{(a^{c_0})_1,(a^{c_1})_1\}$,
and hence also preserves the set $U_1 =  \{ (a^\ell)_0 :  \, \ell \in \{ \pm 1, \pm \ld^2, \pm \ld^4 \} \}$ of certain vertices
at distance $2$ from $1_0$.  It follows that
$$
(a_0^N \cup (a^{-1})_0^N)^x \ = \ a_0^{Nx} \cup (a^{-1})_0^{Nx}
\ = \ a_0^{xN} \cup (a^{-1})_0^{xN} \ \subseteq \ U_1^N \ \subseteq \ H_{0c},
$$
and thus $A_{1_01_1}$ preserves $H_{0c}$.
Similarly, every $x \in A_{1_01_1}$ must preserve the set $\Phi_1 = \{(ba^{d_i})_1 : \, i \in \mz_3\}$,
and hence also preserve the set $U_3 =  \{ (a^\ell)_0 :  \, \ell \in \{  \pm \ld, \pm \ld^3, \pm \ld^5  \} \}$,
and then since $\ld $ is a unit mod $n$ we have $\ld \equiv \pm 1$ mod $3$,
and so $a_1^N\cup (a^{-1})_1^N = (a^{\ld})_1^N\cup (a^{-\ld})_1^N$.
It follows that
$$
(a_1^N\cup (a^{-1})_1^N)^x \ = \ (a^{\ld})_1^{Nx}\cup (a^{-\ld})_1^{Nx}
\ = \ (a^{\ld})_1^{xN}\cup (a^{-\ld})_1^{xN} \ \subseteq \ U_3^N \ \subseteq \ H_{1c},
$$
and then since also $(1_1^N)^x = (1_1^{\, x})^N =  1_1^N \subseteq H_{1c}$,
we find that $A_{1_01_1}$ preserves $H_{1c}$, as required.

\medskip
Now let $X$ be the subgraph of $\G$ induced on all vertices in $H_{0c} \cup H_{1c}$
and let $Y$ be the subgraph induced on all vertices in $H_{0c} \cup H_{1d}$.
Then each of $X$ and $Y$ is a trivalent bi-Cayley graph over the cyclic group $\lg a\rg \cong C_n$;
indeed clearly $X$ is the graph $\BiCay(\lg a\rg, \emptyset,\emptyset, \{a^{c_i} : \, i \in \mz_3\})$,
while $Y$ is isomorphic to $\BiCay(\lg a\rg, \emptyset,\emptyset, \{a^{d_i} : \, i \in \mz_3\})$.
Also the subgroup of $A$ generated by $R(a)$ and $\s_{\a, b}^{\, 2}$ acts transitively on the edges
of both subgraphs.  Hence each is a connected edge-transitive bi-Cayley graph over the
cyclic group $\lg a\rg \cong C_n$, and so by Proposition~\ref{abelian} each of them is arc-transitive,
and then since $n \ge 21$, it follows from Corollary~\ref{cyclic} that each of $X$ and $Y$ is $1$-arc-regular.

We saw above that the arc-stabiliser $A_{1_{0}1_{1}}$ preserves $H_{0c}$ and $H_{1c}$,
and it follows that $A_{1_{0}1_{1}}$ preserves $H_{0d}$ and $H_{1d}$ as well.
Hence $A_{1_{0}1_{1}}$ induces a group of automorphisms of each of $X$ and $Y$.
Then since $X$ is arc-regular, and $A_{1_{0}1_{1}}$ fixes the arc $(1_{0},1_{1})$ of $X$, we find
that $A_{1_{0}1_{1}}$ acts trivially on $X$.  In particular, $A_{1_{0}1_{1}}$ fixes all the vertices $(a^i)_0$
that are common to $X$ and $Y$, including the vertices in $\Delta_1 = \{(a^{c_i})_1: \, i \in \mz_3\}$,
and it follows that $A_{1_{0}1_{1}}$ fixes all the vertices in $\Phi_1 = \{(ba^{d_i})_1: \, i \in \mz_3\}$,
each of which lies in a unique $4$-cycle with $1_0$ and a given vertex of $\Delta_1$.
(Indeed $A_{1_{0}1_{1}}$ acts trivially on $Y$.)
Hence the subgroup $A_{1_{0}1_{1}}$ of $A_{1_0}$ acts trivially on $\Delta_1 \cup \Phi_1 = \G(1_0)$,
and so $A_{1_{0}1_{1}}$ itself is trivial.

In particular, every automorphism of $\G$ is uniquely determined by its effect on $1_0$ and $1_1$.
Also we now find that $A_{1_0} = \lg A_{1_{0}1_{1}},\s_{\a, b} \rg = \lg \s_{\a, b} \rg$, which has order $6$,
so $A^* = R(H)A_{1_0} = R(H) \rtimes \lg \s_{\a, b} \rg$, which has order $12n$.
Consequently the index $4$ normal subgroup $M = \lg R(a), \s_{\a, b}^{\ 2} \rg$ of $A^*$ has odd order $3n$,
and is therefore a characteristic subgroup of of $A^*$.
Moreover, since $R(a)^{\s_{\a, b}} = \s_{\a, b}^{-1}R(a)\s_{\a, b}$ takes $1_0$ to $(a^\ld)_0$,
we know that $R(a)^{\s_{\a, b}} = R(a^{\ld}) = R(a)^{\ld}$, and so conjugation by $\s_{\a, b}^{\, 2}$
induces an automorphism of $\lg R(a) \rg$ of order $3$, centralising the $3$-part of $\lg R(a) \rg$
because this has order at most $3$.  It follows that $J = \lg R(a) \rg$ is the only cyclic normal
subgroup of order $n$ in $M$, and $J$ is therefore characteristic in both $M$ and $A^*.$

\medskip
We complete the proof by using these facts to show that the edge-stabiliser $A_{\{1_{0}1_{1}\}}$ is trivial.
If that is not the case (so that $\G$ is arc-transitive), then the latter subgroup has order $2$,
and is generated by an automorphism $\theta$ that swaps the vertices $1_0$ and $1_1$.
Moreover, from our earlier observations, we know that $\theta$ must preserve the subsets
$\Delta_1 \setminus \{1_1\} = \{(a^{c_i})_1: \, i \in \{0,1\}\}$ and $\Delta_0 \setminus \{1_0\} = \{(a^{-c_i})_0: \, i \in \{0,1\}\},$
and so $\theta$ swaps $a_1$ with $(a^{-c})_0$
where $c = c_0 \equiv 1$ or $c = c_1 \equiv 1+\ld^2$ mod $n$.  It then follows that also $\theta$ swaps $a_0$ with $(a^c)_1$.
Similarly, $\theta$ swaps $\Phi_0 = \{ba^{d_i})_0 : \,i \in \mz_3 \}$ with $\Phi_1 = \{ba^{d_i})_1 : \,i \in \mz_3 \}$,
in a way that preserves the pairing $(ba^{d_i})_0 \leftrightarrow (ba^{d_i})_1$ for $i \in \mz_3$.
In particular, $\theta$ swaps $b_0$ with $(ba^d)_1$ for some $d \in \{d_0,d_1,d_2\}$, and then
also swaps $b_1$ with $(ba^d)_0$.

Now consider what happens to the automorphisms $R(a)$, $R(b)$ and $\s_{\a, b}$ under
conjugation by $\theta$.  Clearly $\theta$ normalises $A^*$ hence normalises
the characteristic subgroups $J$ and $M$ of $A^*$.  Also the fact that $A_{1_{0}1_{1}}$ is trivial
implies that every automorphism is uniquely determined by its effect on $1_0$ and $1_1$.
First $R(b)^{\theta}$ takes $1_0$ to $(ba^d)_0$, and $1_1$ to $(ba^d)_1$,
and therefore $R(b)^{\theta} = R(ba^d) = R(b)R(a)^d$.
Next, $R(a)^{\theta}$ lies in $J^{\theta} = J = \lg R(a) \rg$,
and as $R(a)^{\theta}$ takes $1_0$ to $(a_1)^{\theta} = (a^{-c})_0$, we have $R(a)^{\theta} = R(a)^{-c}$.
Similarly, $\s_{\a, b}^{\,\theta}$ fixes $1_1$ and takes $1_0$ to $(b_1)^{\theta} = (ba^d)_0$,
and it follows that $\s_{\a, b}^{\,\theta} = (\s_{\a, b}R(b))^{\, j}$ for some $j$,
because $\s_{\a, b}R(b)$ fixes $1_1$ and induces the $6$-cycle
$(1_0,b_0,(a^{-1})_0,(ba^{\ld})_0,(a^{-(1+\ld^2)})_0,(ba^{\ld+\ld^3})_0)$ on $\G(1_1)$.
In fact $j = \pm 1$, since $\s_{\a, b}$ and $\s_{\a, b}R(b)$ have order $6$,
and then 
it follows that 
$d = d_2 = 0$ or $d = d_1 = \ld+\ld^3$.
Hence the automorphism of $A^* = R(H) \rtimes \lg \s_{\a, b} \rg$ induced by $\theta$
takes $(R(a),R(b),\s_{\a, b})$ to $(R(a)^{-c},R(b)R(a)^d,(\s_{\a, b}R(b))^{\,\pm 1})$.

In particular, $\theta$ conjugates the involution $\s_{\a, b}^{\, 3}$ to $(\s_{\a, b}R(b))^{\, 3},$
and hence $\theta$ swaps $(ba^{\ld})_0$ with $(ba^{\ld})_1$.
Also $R(a) = R(a)^{\theta^2} = R(a)^{(-c)^2}$ and so $c^2 \equiv (-c^2) \equiv 1$ mod $n$,
and this eliminates the possibility that $c = c_1 = 1+\ld^2$,
because $(1+\ld^2)^2 \equiv 1+2\ld^2+\ld^4 \equiv \ld^2 \not\equiv 1$ mod $n$.
Thus $c = 1$, and so $R(a)^{\theta} = R(a)^{-1}$.
Finally, since $R(ba^\ld)^{\theta}$ takes $1_0$ to $((ba^\ld)_1)^{\theta} = (ba^\ld)_0$,
we find that $\theta$ centralises $R(ba^\ld)$, and so
$R(ba^\ld) = R(ba^\ld)^{\theta} = (R(b)R(a)^\ld))^{\theta} = R(b)R(a)^{d-\ld}$;
which gives $2\ld \equiv d \equiv \ld$ or $\ld+\ld^3$ mod $n$.
Both of these cases are impossible, however, since $\ld \not\equiv 0$ and $\ld^2 \not\equiv 1$ mod $n$.

\smallskip
Hence no such $\theta$ exists, and therefore $\G$ is semisymmetric and edge-regular.
\end{proof}

The graphs $G(n, \ld, k)$ with $k = 6$ investigated above provide the answers to two open questions.

For non-empty subsets $S_{00}, S_{01}, S_{10}$ and $S_{11}$ of $\mz_n$,
define $X=\mathcal{T}(S_{00}, S_{01}, S_{10}, S_{11})$ as the graph with
vertex set $\mz_n\times\mz_2\times\mz_2,$ and edges all pairs of the form
$\{(x, 0, i), (y, 1, j)\}$ where $i, j\in\mz_2$ and $y-x\in S_{ij}$.
The translation mapping $t \mapsto t+1$ on $\mz_n$ clearly induces a semi-regular
automorphism of $X$ of order $n$, with four cycles on $V(X)$.
Any graph admitting a semi-regular automorphism $\pi$ with four cycles on vertices
is called an {\em $(n, \pi)$-tetracirculant}.
Such graphs were considered in a 2001 paper \cite{MP-gf} by Maru\v si\v c and Poto\v cnik,
who showed that every semisymmetric $(n, \pi)$-tetracirculant is isomorphic some
$\mathcal{T}(S_{00}, S_{01}, S_{10}, S_{11})$.
Also if $S_{00}=S_{01}=R$ and $S_{10}=S_{11}=T$, then $\mathcal{T}(R, R, T, T)$
is called a {\em generalised Folkman tetracirculant}.
(See \cite{MP-gf} for the definition of generalised Folkman graph.)

In their 2001 paper, Maru\v si\v c and Poto\v cnik~\cite{MP-gf} posed the two questions below.
\medskip

\f{\bf Problem~B}\ {\rm \cite[Problem~4.3]{MP-gf}}\label{mp-1} \
{\em Is there a semisymmetric tetracirculant which is not a generalised Folkman tetracirculant\,}?
\medskip

\f{\bf Problem~C}\ {\rm \cite[Problem~4.9]{MP-gf}}\label{mp-2} \
{\em Is there a semisymmetric $(n, \pi)$-tetracirculant $\,\G$ such that the four orbits of
$\lg\pi\rg$ are blocks of imprimitivity of $\Aut(\G)$, but $\,\G$ is not a generalised Folkman tetracirculant\,}?
\medskip

We can now give the answer ``Yes'' to both questions.

For suppose $X=\mathcal{T}(R, R, T, T)$ is a semisymmetric generalised Folkman tetracirculant.
Then the vertices $(0, 0, 0)$ and $(0, 0, 1)$ have exactly the same neighbours in $X$,
namely all the vertices $(y, 1, j)$ with $j \in \mz_2$ and $y \in R \cup T$,
and so there exists an automorphism of $X$ that swaps $(0, 0, 0)$ with $(0, 0, 1)$
and fixes all others.  In particular, this implies that $X$ can not be edge-regular.

But now every bi-dihedrant $\BiCay(D_n,\emptyset,\emptyset,S)$ is a tetracirculant, admitting the natural cyclic
subgroup $R(C_n)$ as a group of automorphism with four orbits.
In particular, every bi-dihedrant $\G = \G(n, \ld, 6)$ considered above with $\ld^3\not\equiv-1$ mod $n$
is a semisymmetric $(n, \pi)$-tetracirculant, and furthermore,
the four orbits $H_{0c}$, $H_{0d}$, $H_{1c}$, $H_{1d}$ of the semi-regular cyclic subgroup $\lg R(a) \rg$
of its automorphism group $\Aut(\G)$ are blocks of imprimitivity for $R(H) \rtimes \lg \s_{\a, b} \rg = A^* = \Aut(\G)$.
On the other hand, by Theorem~\ref{edge-regular-graphs}, every such $\G$ is edge-regular,
and so $\G$ cannot be a semisymmetric generalised Folkman tetracirculant.

\subsection{Proof of Theorem~\ref{th-bi-dihedrant}}
Combining Theorem~\ref{th-valency5}, Proposition~\ref{semi-dihe} and Theorem~\ref{edge-regular-graphs}
together gives a proof of Theorem~\ref{th-bi-dihedrant}.  Note that the graphs in  Theorem~\ref{edge-regular-graphs}
are worthy, because every unworthy graph is not edge-regular, by the same argument as given in the penultimate
paragraph of the previous subsection.

\subsection{Edge-regular bi-dihedrants of valency $6$}
Here we give a classification of all $6$-valent edge-regular semisymmetric bi-Cayley graphs over a dihedral group
$D_n$ of odd degree $n$. We begin as follows:

\begin{lemma}\label{edge-regular-normal}
If $\,\G$ is a connected $6$-valent bi-Cayley graph over the dihedral group $H = D_n$ of order~$2n$,
where $n$ is odd, and $\,\G$ is both semisymmetric and edge-regular,
then $R(H)$ is normal in $A = \Aut(\G)$, and the stabiliser in $A$ of the vertex $1_0$ is cyclic of order $6$.
\end{lemma}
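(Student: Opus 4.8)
The plan is to pin down the order of $A=\Aut(\G)$ first, then prove normality of $R(H)$ by Sylow-theoretic means, and finally rule out the non-cyclic possibility for the vertex stabiliser. Since $\G$ is semisymmetric it is bipartite, with the two orbits $H_0,H_1$ of $R(H)$ as its parts, so $R=L=\emptyset$, and by Proposition~\ref{n}(b) I may assume $1\in S$, where $|S|=6$. As $\G$ is edge-regular, $A$ acts regularly on $E(\G)$, so $|A|=|E(\G)|=6|H_0|=12n$ and hence $|A_{1_0}|=|A_{1_1}|=6$. Since $\G$ is not vertex-transitive, $A$ preserves $\{H_0,H_1\}$, so an automorphism fixing the edge $\{1_0,1_1\}$ fixes $1_0$ and $1_1$; by edge-regularity the edge stabiliser is trivial, so $A_{1_0}\cap A_{1_1}=1$, and therefore $A_{1_0}$ acts on the $6$-element set $\G(1_0)$ with trivial point stabiliser, i.e.\ regularly (and likewise $A_{1_1}$ on $\G(1_1)$). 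Thus $A_{1_0}\cong C_6$ or $A_{1_0}\cong S_3$. A short computation with automorphisms of $D_n$ — using, as in the proof of Theorem~\ref{th-valency5}, that no automorphism of $H$ sends $S$ to $S^{-1}$ — shows that $S$ contains exactly $r$ reflections with $3\le r\le 4$: since $1\in S$ we have $r\le 5$, while $r\le 2$ or $r=5$ would yield an automorphism of $D_n$ carrying $S$ to $S^{-1}$, contradicting semisymmetry.

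The main work, and the principal obstacle, is to show $R(H)\triangleleft A$. Let $C=R(\langle a\rangle)\cong C_n$; as $n$ is odd, $C$ is the unique cyclic subgroup of order $n$ in $R(H)=D_n$, hence characteristic in $R(H)$, so $R(H)\le N_A(C)$ and $|A:N_A(C)|$ divides $|A:R(H)|=6$. By Dedekind's modular law $N_A(C)=R(H)\,(N_A(C)\cap A_{1_0})$, so it is enough to show that $A_{1_0}$ normalises $C$, equivalently that $C\triangleleft A$. For every prime $q\mid n$ with $q\ge 7$, the Sylow $q$-subgroup $Q$ of $C$ is also a Sylow $q$-subgroup of $A$ (since $q\nmid 12$ and $q\nmid|A_{1_0}|$) and is characteristic in $R(H)$, so $R(H)\le N_A(Q)$; hence the number of Sylow $q$-subgroups of $A$ divides $6$ and is $\equiv 1\ (\mathrm{mod}\ q)$, forcing it to be $1$, so $Q\triangleleft A$. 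The delicate cases are $q=5$ (where the count could a priori be $6$) and $q=3$ (where the Sylow $q$-subgroup of $C$ is smaller than that of $A$); here I would exploit the regular action of $A_{1_0}$ on $6$ points together with the fact that $R(H)=D_n$ acts \emph{regularly} on each part of $\G$ — which sharply limits how $A$ can act, for instance forbidding certain transitive actions admitting no regular dihedral subgroup — to eliminate the bad configurations. Once all Sylow subgroups of $C$ are normal in $A$ we obtain $C\triangleleft A$; then the four $C$-orbits on $V(\G)$ are blocks of imprimitivity for $A$, on which $R(H)$ acts through a prescribed subgroup of order $2$, and a short argument with this block action promotes $C\triangleleft A$ to $R(H)\triangleleft A$.

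Finally, with $R(H)\triangleleft A$ established we have $A=R(H)\rtimes A_{1_0}$, and by Proposition~\ref{normaliser} — with ${\rm I}=\emptyset$, since any $\delta_{\a,x,y}\in{\rm I}$ would make $\langle R(H),\delta_{\a,x,y}\rangle$ vertex-transitive — we get $A_{1_0}={\rm F}=\{\s_{\a,g}:S^\a=g^{-1}S\}$. Conjugation of the characteristic subgroup $C$ by $A_{1_0}$ gives a homomorphism $A_{1_0}\to\Aut(C_n)$, and $\Aut(C_n)$ is abelian, so if $A_{1_0}\cong S_3$ then its order-$3$ subgroup centralises $C$; that is, there is $\s_{\a,g}\in{\rm F}$ of order $3$ with $a^\a=a$. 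Analysing such an element — its order forces $\a$ to have order $1$ or $3$ and to fix $a$ (so $\a$ is trivial or sends $b\mapsto ba^{n/3}$), its being a nontrivial fixed-point-free permutation of $\G(1_0)$ forbids $\a$ from fixing every rotation of $S$, and the condition $S^\a=g^{-1}S$ constrains $g$ — I would show in each of the cases $r=3$ and $r=4$ that the set of rotations in $S$ is closed under inversion, so $S=S^{-1}$, whence $\delta_{\mathrm{id},1,1}\in{\rm I}$ and $\G$ is vertex-transitive by Proposition~\ref{normaliser}(a), a contradiction. Hence $A_{1_0}\cong C_6$, and the lemma follows. I expect the hard part to be the second step — in particular, handling the exceptional primes $q=3,5$ and bootstrapping from $C\triangleleft A$ to $R(H)\triangleleft A$.
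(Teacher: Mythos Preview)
Your approach diverges from the paper's and has genuine gaps that are not easy to fill. The Sylow argument for $C=\lg R(a)\rg\lhd A$ stalls exactly where you say it does: when $5\mid n$ the Sylow count ``$\equiv 1\pmod 5$ and dividing $6$'' allows $6$, and in that case $N_A(Q)=R(H)$, which is perfectly consistent with a faithful action of $A_{1_0}$ on the six Sylow $5$-subgroups; I do not see a short structural obstruction. For $q=3$ the Sylow $3$-subgroup of $A$ is strictly larger than that of $C$, so the argument does not even get started. The bootstrap $C\lhd A\Rightarrow R(H)\lhd A$ is also not automatic: it amounts to showing that the order-$2$ image $R(H)/C$ is normal in the order-$12$ group $A/C$, and groups of order $12$ (e.g.\ $A_4$, $D_6$) have non-normal involutions, so one needs more than the block action on the four $C$-orbits.

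The paper avoids all of this by working with the \emph{core} $K=\bigcap_{x\in A}R(H)^x$. Since $|A:R(H)|=6$, the quotient $A/K$ embeds in $S_6$, so $R(H)/K$ is a dihedral-type quotient sitting inside $S_5$, forcing $R(H)/K\in\{1,C_2,D_3,D_5\}$. Each nontrivial case is eliminated by a targeted argument (the $D_3$ case is the most delicate and uses that the induced subgraph on $H_{0c}\cup H_{1c}$ is a trivalent edge-transitive bi-Cayley graph over $C_n$, together with Corollary~\ref{cyclic}). This packages the ``bad primes'' into a short finite list of group-theoretic configurations rather than a prime-by-prime Sylow analysis. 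For the final step, the paper's route is also cleaner than your case analysis on $\s_{\a,g}$: it first proves the conditional ``if $J=\lg R(a)\rg\lhd A$ then $C_A(J)=J$'' (this is where edge-regularity is really used, to exclude unworthy configurations), and then, once $R(H)\lhd A$ gives $J\lhd A$, the quotient $A/J$ embeds in the abelian group $\Aut(J)$, so $A_{1_0}\cong A/R(H)$ is abelian of order $6$, hence cyclic. Your $S_3$-elimination via an order-$3$ element centralising $C$ could perhaps be pushed through, but it requires a careful case split on the number $r$ of reflections in $S$ and on $3\mid n$ versus $3\nmid n$, whereas the self-centralising argument handles everything uniformly.
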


\begin{proof} 
Let $a$ and $b$ be generators for $H$ satisfying the usual relations $a^n=b^2=1$ and $bab=a^{-1}$,
and let $J$ be the subgroup of $A=\Aut(\G)$ generated by $R(a)$.
Then the orbits of $J$ are the four subsets $H_{0c}$, $H_{0d}$, $H_{1c}$ and $H_{1d}$ defined in the
proof of Theorem~\ref{edge-regular-graphs}.  Also note that $\G$ is bipartite, and since $\G$ is $6$-valent
and edge-regular, we have $|A_{1_0}|=|A_{1_1}|=6$ and $A=R(H)A_{1_0}$, so $|A|=12n$.

\smallskip
We begin by proving that if $J = \lg R(a)\rg$ is a normal subgroup of $A$, then $J$ is self-centralising in $A$.
Note that conjugation of $J$ gives a homomorphism from $A$ to $\Aut(J)$, which is abelian
since $J$ is cyclic, and $J$ is contained in the kernel $C = C_A(J)$, so $A/C$ is abelian, of order dividing $|A/J| = 12$.
On the other hand, the involution $b$ does not centralise $a$, so $|A/C|$ is even, and therefore  $|C| = n, 2n, 3n$ or $6n$.

Suppose $|C|=3n$ or $6n$.  Then $|A/C| = 4$ or $2$, so every Sylow $3$-subgroup of $A$ is contained in $C$.
Let $P$ be any one of them, and take $M=JP$.  This contains $J$ as a central subgroup of order $n$,
so $|M|=3n$, which is odd, and therefore $M$ preserves the bipartition of $\G$.
Also $M/J$ is the only subgroup of $C/J$ of order $3$ (since $|C/J|=6$ or $3$), so $M$ is normal in $A$.
Moreover, since both $J \cap M_{1_0}$ and $J \cap M_{1_1}$ are trivial,
we find that $M = J\times M_{1_0} = J\times M_{1_1} \cong C_n \times C_3$, and hence $M$ is abelian.
Now let $N = \lg M_{1_0},M_{1_1} \rg$.
Then $N$ is abelian, and isomorphic to $C_3 \times C_3$ since $\G$ is edge-regular,
and also $N$ is characteristic in $M$ and hence normal in $A$, and therefore $N$ contains the stabiliser of
of every vertex of $\G$.  It follows that every orbit of $N$ on $V(\G)$ has length $3$.
In particular, the neighbourhood $\G(1_0)$ of the vertex $1_0$ is the union of two such orbits,
namely $\Delta_1 = 1_1^{\, N}$ and $\Phi_1 = x_1^{\, N}$ for some $x \in H \setminus \lg a \rg$.
This, however, implies that if $h_0$ is any other vertex of $\Delta_0 = 1_0^{\, N}$, then every edge incident with $h_0$
lies in the same orbit under $N$ as $\{1_0,1_1\}$ or $\{1_0,x_1\}$, so $1_0$ and $h_0$ have exactly the same
neighbours, and therefore $\G$ is unworthy, so cannot be edge-regular. (Indeed $\G \cong K_{6,6}$.) Thus $|C| \le 2n$.

Next, suppose $|C|=2n$.  Then $C$ is generated by $J$ and an involution that centralises $J$,
so $C$ is cyclic. It follows that $C$ cannot act regularly on both $H_0$ and $H_1$, for otherwise
$\G$ would be a bi-Cayley graph over $C_{2n}$, and by Proposition~\ref{abelian} it would
be vertex-transitive and hence arc-transitive, so not edge-regular.
Without loss of generality, $C$ does not regularly on $H_0$, and then the vertex-stabiliser $C_{1_0}$
is a non-trivial characteristic subgroup of $C$ and therefore normal in $A$, so fixes every vertex of $H_0$.
It follows that $C_{1_0}$ is semi-regular on $H_1$, with orbits of length $2$.
But then the two vertices in each orbit of $C_{1_0}$ on $H_1$ have exactly the same neighbours,
and therefore $\G$ cannot be edge-regular, another contradiction.

Hence the only possibility is that $|C| = n$, in which case $J = C = C_A(J)$.

\smallskip
We now proceed to use similar arguments to show that $R(H) \lhd A$.
To do this, we let $K$ be the core of $R(H)$ in $A$.  Now since $|A:R(H)| = |A_{1_0}|=6$, we see that $A/K$
is isomorphic to a subgroup of $S_6$, and then $R(H)/K$ is isomorphic to a subgroup of $S_5$,
but also $R(H)/K$ is a quotient of the dihedral group $R(H) \cong D_n$ of twice odd order,
so the only possiblilities are $\{1\}$, $C_2$, $D_3$ and $D_5$.

If $R(H)/K \cong C_2$, then since $R(H)$ is dihedral of twice odd order,
we have $K = J = \lg R(a)\rg\cong C_n$, and so $A/K = A/J = A/C_A(J)$, which is abelian,
but then $R(H)/K \lhd A/K$, so $R(H) \lhd A$, contradiction.

Also if $R(H)/K \cong D_5$, then $A/K$ has order $60$ and is a product of $D_5$ and $A_{1_0} \cong C_6$,
and hence is soluble.  Some elementary group theory then shows that $A/K$ has a normal Sylow $5$-subgroup,
but then since $|A:J| = 12$ this Sylow $5$-subgroup must be $J/K$, and so $J \lhd A$, contradiction.

Next, suppose that $R(H)/K \cong D_3$.  Then clearly $K=\lg R(a^3)\rg$, and $A/K$ has order $36$
and is a product of $D_3$ and $A_{1_0} \cong C_6$.  In particular, $J = \lg R(a)\rg \leq C_A(K)$,
but also $J$ is not normal in $A$, for otherwise $K = {\rm core}_A(R(H))$ would contain $J$,
and so $J \ne C_A(K)$, and therefore $|C_A(K)|$ is a proper multiple of $n$, dividing $|A| = 12n$.
If $|K|=1$ then $R(H) \cong R(H)/K \cong D_3$ and it follows that $2n =|H| = 6$ and $\G\cong K_{6,6}$,
again contrary to the assumption that $\G$ is edge-regular. Thus $|K|>1$.
Also $|K| = |R(H)|/6 = n/3$, so $|K|$ is odd, and therefore $K=\lg R(a^3)\rg$ cannot be  centralised by $R(b)$,
which in turn implies that $|C_A(K)|$ divides $|A|/2 = 6n$.
On the other hand, $|C_A(K)| \ne 2n$, for otherwise $J$ would be a normal Hall $2'$-subgroup of $C_A(K)$,
and hence characteristic in $C_A(K)$ and normal in $A$.
Thus $|C_A(K)| = 3n$ or $6n$.
Moreover, $|C_A(K)/K|$ is either $3n/(n/3) = 9$ or $6n/(n/3) = 18$.

Now let $P$ be a Sylow $3$-subgroup of $C_A(K)$, and let $M=KP$.
Then $M/K = KP/K$ is a Sylow $3$-subgroup of $C_A(K)/K$, so $|M/K| = 9$,
and hence is normal in $C_A(K)/K$, and therefore characteristic in $C_A(K)/K$, and hence normal in $A/K$.
(In fact $M/K \cong C_3 \times C_3$, because $A$ is the product of the complementary subgroups
$R(H)$ and $A_{1_0}$, with $|A_{1_0}|=6$.)
Thus $M$ is a normal subgroup of $A$, of order $9|K| = 3n$.
Also $J/K$ (of order $3$) must be contained in $M/K$, so $\lg R(a) \rg = J \leq M$.
Consequently, just as before, 
we find that every orbit of $M$ on $V(\G)$ is one the four orbits $H_{0c}$, $H_{0d}$, $H_{1c}$ and $H_{1d}$ of $J$.
Also the induced subgraph $X$ on $H_{0c}\cup H_{1c}$ is a $3$-valent bi-Cayley graph over $\lg a\rg \cong C_n$,
on which $M$ acts edge-transitively, with $M_{1_0}\cong M_{1_1}\cong C_3$.

Also we note that $X$ is connected.  For suppose $C$ is a component of $X$, with parts $C_0=V(C)\cap H_0$
and $C_1=V(C)\cap H_1$.  Then $|C_0|=|C_1|$, since $C$ is $3$-valent.
Also each $C_i$ is a block of imprimitivity for the action of $A$ on $V(\G)$.
Next let $x$ and $y$ be involutory automorphisms in $A_{1_0}\setminus M_{1_0}$ and $A_{1_1}\setminus M_{1_1}$,
respectively.  Then $C_0^{\,x} = C_0$ and $C_1^{\,x} \subseteq H_{1d}$,
while $C_1^{\,y} = C_1$ and $C_0^{\,y}\subseteq H_{0d}$.
The induced subgraphs on $C_0 \cup C_1^{\,x} = (C_0 \cup C_1)^x$ and $C_0^{\,y} \cup C_1 = (C_0 \cup C_1)^y$
are isomorphic to $C$, and contain edges from $H_{00}$ to $H_{1d}$ and from $H_{0d}$ to $H_{1c}$,
while the induced subgraph on $C_1^{\,x} \cup C_0^{\,y}$ contains edges from $H_{1d}$ to $H_{0d}$.
Hence the induced subgraph on $C_0\cup C_1\cup C_1^{\,x} \cup C_0^{\,y}$ is connected and $6$-valent,
so must be $\G$, and therefore $C=X$.  

This means we can apply Corollary~\ref{cyclic} to $X$, and conclude that $n$ divides $\ell^2+\ell+1$
for some $\ell\in\mz_n$, and hence that $n$ is not divisible by $9$.  It follows that the $3$-part of $|C_A(K)|$
is $9$, so $|P| = 9$, and therefore $P$ is abelian.  Hence also $M = KP$ is abelian, and in particular,
$M$ centralises $J$.  Again, however, this implies that the stabiliser of every vertex of $\G$ is a
subgroup of $N = \lg M_{1_0},M_{1_1} \rg \cong C_3 \times C_3$, and so $\G$ is unworthy,
which contradicts edge-regularity.

Thus $R(H)/K$ is not isomorphic to $D_3$, so must be trivial, and we find $R(H) = K \lhd A$, as required.

Finally, since $J = \lg R(a)\rg$ is characteristic in $R(H)$, it follows that $J \lhd A$,
and then $A/J = A/C_A(J)$, which is abelian, and so $A_{1_0} \cong A/R(H)$ is abelian, of order $6$,
and therefore cyclic.
\end{proof}

We can now give and prove the main theorem in this subsection.

\begin{theorem}\label{th-edge-regular}
Let $\,\G={\rm BiCay}(H,\emptyset, \emptyset, S)$ be a connected $6$-valent bi-Cayley graph
over the dihedral group $H = D_n$ of order~$2n$.
Then $\,\G$ is semisymmetric and edge-regular if and only if $\,\G \cong \G(n, \ld, 6)$ 
for some integer $\ld$ satisfying $\ld^6 \equiv 1$ mod $n$ and  $1+\ld^2+\ld^4 \equiv 0$ mod $n$
but $\ld^3\not\equiv-1$ mod $n$.
\end{theorem}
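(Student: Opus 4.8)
For the easy direction, suppose $\ld$ satisfies $\ld^6\equiv1$ and $1+\ld^2+\ld^4\equiv0$ but $\ld^3\not\equiv-1$ mod $n$. (The first congruence follows from the second on multiplying by $\ld^2-1$, and $n$ must be odd, since otherwise a unit $\ld$ is odd and $1+\ld^2+\ld^4$ is odd.) One checks that $S(n,\ld,6)=\{1,a,a^{1+\ld^2},b,ba^\ld,ba^{\ld(1+\ld^2)}\}$ has six distinct elements and generates $D_n$ (because $a,b\in S(n,\ld,6)$), so $\G(n,\ld,6)=\BiCay(D_n,\emptyset,\emptyset,S(n,\ld,6))$ is a connected $6$-valent bi-dihedrant, and then Theorem~\ref{edge-regular-graphs} (the case $k=3$) immediately gives that $\G(n,\ld,6)$ is semisymmetric, and edge-regular since $\ld^3\not\equiv-1$.

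\textbf{Necessity --- the automorphism group.} Assume $\G=\BiCay(H,\emptyset,\emptyset,S)$ is connected, $6$-valent, semisymmetric and edge-regular over $H=D_n=\langle a,b\mid a^n=b^2=(ab)^2=1\rangle$ with $n$ odd, and write $A=\Aut(\G)$. By Lemma~\ref{edge-regular-normal}, $R(H)\lhd A$, the stabiliser $A_{1_0}$ is cyclic of order $6$, and $A=R(H)A_{1_0}$. As $\G$ is connected, edge-transitive and not vertex-transitive, $A$ has exactly the orbits $H_0,H_1$ on vertices and fixes each setwise. Proposition~\ref{normaliser} with $R=L=\emptyset$ then forces ${\rm I}=\emptyset$ (else $\G$ is vertex-transitive by Proposition~\ref{normaliser}(a)), so $A=R(H)\rtimes{\rm F}$ with ${\rm F}=\{\s_{\a,g}\mid S^\a=g^{-1}S\}$; comparing orders gives ${\rm F}=A_{1_0}$, and edge-transitivity makes $A_{1_0}$ regular on $\G(1_0)=\{s_1:s\in S\}$. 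Hence a generator $\s=\s_{\a,g}$ of $A_{1_0}$ permutes the six elements of $S$ in a single cycle $s\mapsto gs^\a$.

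\textbf{Necessity --- identifying $\G$.} Next I would use the structure of $\Aut(D_n)$ for $n$ odd: $\a$ acts as $a\mapsto a^\ld$ (with $\ld\in\mz_n^{\,*}$) and $b\mapsto ba^t$ (with $t\in\mz_n$). Since $1\in S$ gives $g\in S$, and $g\in\langle a\rangle$ would confine the $6$-cycle through $1$ to $\langle a\rangle$ and contradict $\langle S\rangle=D_n$, the element $g$ is a reflection; relabelling the generator $b$, I may assume $g=b$. Running the orbit $t_0=1$, $t_{i+1}=b\,t_i^\a$ gives
\[
S=\{\,1,\ a^{t},\ a^{t(1+\ld^2)},\ b,\ ba^{\ld t},\ ba^{\ld t(1+\ld^2)}\,\},
\]
with $t(1+\ld^2+\ld^4)\equiv0$ mod $n$ coming from closure after six steps. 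Connectedness forces $\langle S\rangle=D_n$, and since $\langle S\rangle\cap\langle a\rangle=\langle a^{\gcd(t,n)}\rangle$ (using $b\cdot ba^{\ld t}=a^{\ld t}$, etc., and that $\ld$ is a unit) this gives $\gcd(t,n)=1$, hence $1+\ld^2+\ld^4\equiv0$ mod $n$ and so $\ld^6\equiv1$. Because $\ld+\ld^3=\ld(1+\ld^2)$, the automorphism $a\mapsto a^{t}$, $b\mapsto b$ of $D_n$ carries $S(n,\ld,6)$ onto $S$, so $\G\cong\G(n,\ld,6)$ by Proposition~\ref{n}(c). I expect this step --- turning the abstract group $A_{1_0}\cong C_6$ of $\s$-type automorphisms into the explicit subset $S$, and invoking connectedness to see that $t$ is a unit so the congruence on $\ld$ transfers --- to be the main obstacle of the argument.

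\textbf{Necessity --- the condition $\ld^3\not\equiv-1$.} Finally, suppose for contradiction $\ld^3\equiv-1$ mod $n$. Combined with $1+\ld^2+\ld^4\equiv0$ this yields $\ld^2-\ld+1\equiv0$, and then exactly the computation in the first paragraph of the proof of Proposition~\ref{semi-dihe} (specialised to $k=3$ and $\ell=d_0=\ld$) shows that $(ba^\ld)_0$ is fixed by the $6$-cycle $\s_{\a,b}$ acting on $\G(1_0)$, so $1_0$ and $(ba^\ld)_0$ have exactly the same neighbourhood. Thus $\G$ is unworthy and cannot be edge-regular, a contradiction. Hence $\ld^3\not\equiv-1$ mod $n$, completing the proof.
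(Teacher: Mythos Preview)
Your approach is essentially the same as the paper's: invoke Lemma~\ref{edge-regular-normal} to get $R(H)\lhd A$ and $A_{1_0}\cong C_6$, write a generator of $A_{1_0}$ as $\s_{\a,g}$, observe that $g$ must be a reflection, compute the orbit of $1$ under $s\mapsto gs^\a$ to identify $S$, and use connectedness to make the parameter $t$ a unit. The sufficiency via Theorem~\ref{edge-regular-graphs} and the exclusion of $\ld^3\equiv-1$ via unworthiness also match the paper.

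There is, however, one genuine gap. You never verify that $\ld$ has order \emph{exactly} $6$ in $\mz_n^{\,*}$, and this is required: in Example~\ref{exam1} the graph $\G(n,\ld,2k)$ is defined only for $\ld$ of order $2k$, so writing ``$\G\cong\G(n,\ld,6)$'' is not yet justified. Concretely, your conditions $\ld^6\equiv1$ and $1+\ld^2+\ld^4\equiv0$ do not rule out $\ld$ of order $1$, $2$ or $3$ (for instance $n=7$, $\ld=2$ satisfies both but has order $3$). The paper handles this by two short extra checks: if $\ld^2\equiv1$ then $3\equiv0$ forces $n=3$ and $\G\cong K_{6,6}$, contradicting semisymmetry; and if $\ld^3\equiv1$ then $1_1$ and $(ba^\ld)_1$ have the same neighbourhood, so $\G$ is unworthy and cannot be edge-regular. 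These are easy to add and use the same unworthiness mechanism you already invoke for $\ld^3\equiv-1$, but without them the identification with $\G(n,\ld,6)$ is incomplete.
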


\begin{proof}
First, if $\G \cong \G(n, \ld, 6)$ where $\ld^3\not\equiv-1$ mod $n$, then by Theorem~\ref{edge-regular-graphs}
we know that $\G$ is edge-regular, so it remains to prove the converse.
So suppose $\G$ is semisymmetric and edge-regular, and let $A=\Aut(\G)$.

By Proposition~\ref{n}, up to graph isomorphism we may assume that $S$ generates $H$, and contains the identity element of $H$,
so that $1_1\in \{s_1 : \, s\in S\} = \G(1_0)$.
Also by Lemma~\ref{edge-regular-normal} we know that $R(H)\unlhd A$ and $A_{1_0}\cong C_6$,
and so by Proposition~\ref{normaliser}, we may take $A_{1_0}=\lg\s_{\a, v}\rg$ for some  $\a\in\Aut(H)$
and $v\in H$.

Now $\s_{\a, v}$ takes $1_1$ to $v_1$, so $\s_{\a, v}^{\, i}$ takes $1_1$ to $(vv^\a v^{\a^2}\ldots\ v^{\a^{i-1}})_1$
for all $i \ge 1$, and as $\s_{\a, v}$ has order $6$, it follows that
$S = \{1\} \cup \{\, vv^\a v^{\a^2}\ldots\ v^{\a^{i-1}} \! : \, 1 \le i \le 5\}$.
In particular, since these elements have to generate $H$, we find that $v$ cannot lie in the maximal cyclic
subgroup $C_n$ of $H = D_n$, so must be an involution.
Next, let $u$ be any generator of the subgroup $C_n$, and suppose that $\a$ takes $u$ to $u^\ld$,
and $v$ to $vu^j$, where $\ld\in\mz_n^{\,*}$ and $j\in\mz_n$. Then it is easy to see that
$$ S \ = \ \{1, v, vv^\a, vv^\a v^{\a^2}, vv^\a v^{\a^2}v^{\a^3}, vv^\a v^{\a^2}v^{\a^3}v^{\a^4}\}
\ = \ \{1, v, u^j, vu^{j\ld}, u^{j(1+\ld^2)}, vu^{j(\ld+\ld^3)}\},
$$
and also that $1_1 = 1_1^{\,\s_{\a, v}^{\, 6}} = (v(vu^{j(\ld+\ld^3)})^\a)_1 = (u^{j(1+\ld^2+\ld^4)})_1$,
so $u^{j(1+\ld^2+\ld^4)} = 1$.
Moreover, since $S$ generates $H$, we see that $j$ must be a unit mod $n$,
so $u^j$ has order $n$, and therefore $1+\ld^2+\ld^4 \equiv 0$ mod~$n$.
It follows that $1 - \ld^6 = (1- \ld^2)(1+\ld^2+\ld^4) \equiv 0$ mod $n$, and so $\ld^6 \equiv 1$ mod $n$.
Also $\ld^2 \not\equiv 1$ mod $n$, for otherwise $0 \equiv 1+\ld^2+\ld^4 \equiv 3$ mod $n$,
which implies $n = 3$, but then $\G \cong K_{6,6}$, which is arc-transitive.

We can now take $a = u^j$ and $b = v$ as our canonical generators for $H = D_n$,
and with these we have $S = \{1, b, a, ba^{\ld}, a^{1+\ld^2}, ba^{\ld+\ld^3}\} = S(n,\ld,6)$.
Also $\ld^3\not\equiv 1$ mod $n$, for otherwise $1_1$ and $(ba^\ld)_1$ have the same
neighbours, and so $\G$ is unworthy and hence cannot be edge-regular.
Similarly $\ld^3\not\equiv -1$ mod $n$, for otherwise $1_0$ and $(ba^\ld)_0$ have the
same neighbours, and again $\G$ cannot be edge-regular.

Thus $\ld^6 \equiv 1$ mod $n$ and $1+\ld^2+\ld^4 \equiv 0$ mod $n$ but $\ld^3\not\equiv 1$ mod $n$,
and $\G\cong\G(n, \ld, 6)$, as required.
\end{proof}

\section{Tetravalent half-arc-transitive bi-$p$-metacirculants}
\label{sec:tetravalent}

In this final section, we consider tetravalent half-arc-transitive graphs that are constructible as normal edge-transitive bi-Cayley graphs over metacyclic $p$-groups. One motivation for this comes from some work of Bouwer \cite{Bouwer1970} in 1970.
Bouwer confirmed Tutte's question~\cite{Tutte1966} about the existence of half-arc-transitive graphs with even valency at least $4$, and the smallest graph in his family is a bi-Cayley graph over a non-abelian metacylic group of order $27$. Another motivation comes from the literature on half-arc-transitive metacirculants of prime-power order. 
An {\em $(m, n)$-metacirculant} is a graph $\G$ of order $mn$ which admits an automorphism $\s$ of order $n$ such that $\lg\s\rg$ is semi-regular on $V(\G)$, and an automorphism $\tau$ normalising $\lg\s\rg$ such that $\tau$ has a cycle of size $m$ on $V(\G)$ and cyclically permutes the $m$ orbits of $\lg\s\rg$.

Metacirculant graphs were introduced by Alspach and Parsons~\cite{AP}, and have many interesting and important properties.
A graph is called a {\em weak metacirculant} if it admits a metacyclic group of automorphisms acting transitively on vertices.
It is easy to see that every metacirculant is a weak metacirculant,
and in 2008, Maru\v si\v c and \v Sparl~\cite[p.368]{MS} asked whether the converse is true or false.

In a recent paper, Li, Song and Wang \cite{LWS} claimed to prove that the converse is false, in a theorem stating that every non-split metacyclic $p$-group with $p$ an odd prime acts transitively on the vertices of some half-arc-transitive $4$-valent graph that is a weak metacirculant but not a metacirculant.
Unfortunately they made a mistake in the first paragraph of their proof of Theorem~1.3 in \cite{LWS}, and that theorem is incorrect, as we will see from Theorem \ref{p-metacirculants} below.

Nevertheless it is still true that not every weak metacirculant is a circulant.
In fact, the $6$-valent bi-Cayley graph on the cyclic group $C_{28}$ that we gave in Example~\ref{exam-half-abelian}
is a half-arc-transitive graph of order $56$ that is a weak metacirculant (with the subgroup $C_7 \times Q_8$ of its automorphism group being also a non-split extension of $C_4$ by $C_{14}$), but not a metacirculant
--- as can be confirmed by an easy computation, with the help of {\sc Magma}~\cite{BCP} if necessary.
Two other examples of order $800$ have also been found very recently by \v Sparl and Anton\v ci\v c
\cite{AS}, in the census of all $4$-valent half-arc-transitive graphs up to order $1000$ created by Poto\v cnik, Spiga and Verret
\cite{PSV}.
An infinite family of $6$-valent examples (generalising Example~\ref{exam-half-abelian}) will be constructed in
\cite{Zhou-Zhang}, using the methods developed in the current paper.



For the remainder of this section, we let $p$ be an odd prime.
Also we need some additional background.
If $G$ is a metacyclic group, then every subgroup $H$ of $G$ is also metacyclic
(for if $M$ is a normal cyclic subgroup of $G$ such that $G/M$ is cyclic,
then $H\cap M$ is a cyclic normal subgroup of $H$, and similarly $H/(H\cap M) \cong HM/M$ is cyclic).
For any group $G$, the unique minimal normal subgroup $N$ of $G$ such that $G/N$ is a $p$-group is denoted by $O^p(G)$. Also if $G$ has a normal $p'$-subgroup $C$ such that $G=PC$ for some Sylow $p$-subgroup $P$ of $G$, then $C$ is called a {\em normal $p$-complement} in $G$.

Now let $G$ be any finite group having a nonabelian metacyclic Sylow $p$-subgroup $P$.
Then by a theorem of Sasaki \cite[Proposition~2.1]{Sasaki-meta}, we find that $N_G(P)\cap O^p(G)=O^p(N_G(P))$, and moreover, if $N_G(P)$ has a normal $p$-complement, then so does $G$.
Then by another theorem of Sasaki \cite[Proposition~2.2]{Sasaki-meta} and a theorem
of Lindenberg \cite{Lindenberg}, on automorphisms of split and non-split metacyclic $p$-groups (respectively),
we obtain the following:

\begin{xca}\label{normal complement}
Let $G$ be a finite group having a nonabelian metacyclic Sylow $p$-subgroup $P$.
If $P$ is non-split, then $G$ has a normal $p$-complement.
On the other hand, if $P$ is split, and therefore a semidirect product $K \rtimes Q$ of cyclic $p$-groups,
then either $G$ has a normal $p$-complement, or $P$ has an automorphism $\beta$ such that
$P\cap O^p(G)=P\cap O^p(N_G(P))=K^\beta.$
\end{xca}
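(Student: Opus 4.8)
The plan is to push everything down to the normaliser $N=N_G(P)$ and then feed in the automorphism-theoretic results of Sasaki and Lindenberg. Both uses of Sasaki's Proposition~2.1 are bookkeeping: since $P\le N$, the identity $N\cap O^p(G)=O^p(N)$ at once gives $P\cap O^p(G)=P\cap O^p(N)$, which is already the first of the two equalities asserted in the split case, and it reduces the whole statement to either producing a normal $p$-complement in $N$ (whence one in $G$) or identifying $P\cap O^p(N)$. Observe also that $P$ is a Sylow $p$-subgroup of $N$, so $P$ maps onto $N/O^p(N)$; hence, writing $W:=P\cap O^p(N)$, we have $P/W\cong N/O^p(N)$, and $N$ has a normal $p$-complement precisely when $W=1$.

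First I would dispose of the non-split case. By Lindenberg's theorem, the automorphism group of a non-split nonabelian metacyclic $p$-group with $p$ odd is a $p$-group, so $N/C_G(P)$, being isomorphic to a subgroup of $\Aut(P)$, is a $p$-group. Moreover $Z(P)=P\cap C_G(P)$ is a Sylow $p$-subgroup of $C_G(P)$ and is central in $C_G(P)$, so by Burnside's normal $p$-complement theorem $C_G(P)$ has a normal $p$-complement $M$; since $M$ is characteristic in $C_G(P)\lhd N$ we get $M\lhd N$, and $N/M$, being an extension of the $p$-group $C_G(P)/M\cong Z(P)$ by the $p$-group $N/C_G(P)$, is a $p$-group. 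Hence $M$ is a normal $p$-complement in $N$, and Sasaki's Proposition~2.1 then yields a normal $p$-complement in $G$, as required.

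For the split case, write $P=K\rtimes Q$ with $K,Q$ cyclic $p$-groups, and assume $N$ (equivalently $G$) has no normal $p$-complement, i.e.\ $W=P\cap O^p(N)\ne 1$; the goal is to exhibit $\beta\in\Aut(P)$ with $W=K^\beta$. Here I would apply Sasaki's Proposition~2.2, which describes the automorphism group of a split metacyclic $p$-group — in particular the structure of its $p'$-subgroups — to the conjugation action of $N$ on $P$ (equivalently, to $N/C_G(P)\le\Aut(P)$, where $PC_G(P)/C_G(P)\cong\Inn(P)$). That result forces the $p'$-automorphisms occurring to be cyclic and to act on $P$ only ``along $K$'', namely to centralise some $\Aut(P)$-image of the complement $Q$. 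Combining this with $N/O^p(N)\cong P/W$ from the first paragraph, one reads off that $P/W$ is a cyclic $p$-group that is a quotient of an $\Aut(P)$-image of $Q$, that $O^p(N)$ contains the corresponding $\Aut(P)$-image of $K$, and — using Sasaki's description to exclude the intermediate possibilities — that $W$ is either trivial (excluded by assumption) or exactly that image $K^\beta$ of $K$. Together with the first equality, this gives $P\cap O^p(G)=P\cap O^p(N_G(P))=K^\beta$.

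The step I expect to be the main obstacle is this last one: converting Sasaki's structural description of $\Aut(P)$ for split metacyclic $P$ into the clean statement $P\cap O^p(N)=K^\beta$. Two points need care. First, one must check that the relevant $p'$-automorphisms really form a cyclic group acting faithfully and only on an $\Aut(P)$-image of $K$, and that passing from $\Aut(P)$ back to $N$ — through the $p$-group $\Inn(P)$ and through $C_G(P)$ — introduces no further $p'$-contribution and keeps track of the images of $K$ and $Q$ correctly; here the oddness of $p$ is essential, since it is what makes $\Aut(C_{p^a})$ cyclic and controls the rank-two Frattini quotient of $P$. Second, one must verify that $C_G(P)$ contributes nothing beyond its central $p$-part, which again follows from $Z(P)$ being a central Sylow $p$-subgroup of $C_G(P)$ together with the fact that every subgroup of a metacyclic group is metacyclic. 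Once these are in hand, the counting above closes the argument.
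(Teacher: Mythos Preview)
Your approach is exactly what the paper does: the paper gives no detailed proof but simply states the proposition as a consequence of Sasaki's Propositions~2.1 and~2.2 and Lindenberg's theorem on automorphisms of (respectively) split and non-split metacyclic $p$-groups, and your proposal is a correct fleshing-out of precisely that derivation. The only extra ingredient you make explicit---Burnside's transfer theorem applied to $C_G(P)$ via its central Sylow $p$-subgroup $Z(P)$---is standard and is implicitly absorbed into the paper's citation of Sasaki.
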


We use Proposition \ref{normal complement} to study tetravalent half-arc-transitive metacirculants of prime-power order.

\begin{theorem}\label{p-metacirculants}
Let $\,\G$ be a connected $4$-valent half-arc-transitive graph of order $p^n$ for some odd prime $p$.
Then $\,\G$ is a weak metacirculant if and only if $\,\G$ is a metacircuant.
\end{theorem}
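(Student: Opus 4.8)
The converse direction is already noted in the text (every metacirculant is a weak metacirculant), so the plan is to prove that if $\G$ is a connected $4$-valent half-arc-transitive graph of order $p^n$, with $p$ odd, and $\G$ is a weak metacirculant, then $\G$ is a metacirculant. Write $A=\Aut(\G)$. The first step is to pin down the Sylow $p$-structure of $A$. Because $\G$ is $4$-valent, half-arc-transitive and connected, the edge stabiliser $A_{\{u,v\}}$ must fix each of $u$ and $v$ (otherwise vertex- plus edge-transitivity would give arc-transitivity), so $|A_v:A_{uv}|=|E(\G)|/|V(\G)|=2$, giving $|A_v|\ge 2$; and $A_v$ is a $2$-group. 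The latter is the standard fact for tetravalent half-arc-transitive graphs: $A_v$ has two orbits of length $2$ on $\G(v)$, and on passing from a vertex to an adjacent one the pointwise stabiliser of a vertex together with its neighbourhood can only shrink by a factor of at most $2$, so by connectedness and induction the kernel of $A_v$ on $\G(v)$ is a $2$-group. I would either cite this or include that short induction. It follows that a Sylow $p$-subgroup of $A$ has order exactly $|V(\G)|=p^n$.

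Next I would use the weak-metacirculant hypothesis. It gives a transitive metacyclic subgroup of $A$; a Sylow $p$-subgroup of that subgroup is again transitive (a Sylow $p$-subgroup of a group acting transitively on a set of $p$-power order is transitive) and is metacyclic (subgroups of metacyclic groups are metacyclic, as noted in the preliminaries). Its point stabiliser is a $p$-subgroup of the $2$-group $A_v$, hence trivial, so this subgroup has order $p^n$ and is therefore a Sylow $p$-subgroup $P$ of $A$ acting regularly on $V(\G)$. Thus $\G$ is a Cayley graph on the metacyclic $p$-group $P$.

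I would then show that $P$ is a split extension $K\rtimes Q$ of two cyclic $p$-groups. If $P$ is abelian this is automatic, since an abelian metacyclic group is generated by two elements, hence is $C_{p^a}\times C_{p^b}$. If $P$ is non-abelian, I would first observe that $A$ has no normal $p$-complement: such a complement $C$ would be a normal $p'$-subgroup with $|C|=|A_v|\ge 2$, but as $C\lhd A$ and $A$ is transitive, all orbits of $C$ on $V(\G)$ have a common size dividing both $p^n$ and $|C|$, hence size $1$, so $C$ fixes every vertex and $C=1$, a contradiction. Since a non-split non-abelian metacyclic Sylow $p$-subgroup would force a normal $p$-complement by Proposition~\ref{normal complement}, we conclude $P=K\rtimes Q$ with $K,Q$ cyclic $p$-groups.

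Finally, with $P=K\rtimes Q$ acting regularly on $V(\G)=P$ by right multiplication, I would exhibit the metacirculant structure directly: put $\sigma=R(k)$ for a generator $k$ of $K$ and $\tau=R(q)$ for a generator $q$ of $Q$. Then $\lg\sigma\rg=R(K)$ is semi-regular of order $|K|$, its orbits are the $m:=|Q|=|P:K|$ left cosets of $K$, and since $K\lhd P$ the element $\tau$ normalises $\lg\sigma\rg$ and induces on $P/K\cong Q$ the regular action of the generator $\bar q$, so it cyclically permutes the $m$ cosets; moreover every cycle of $\tau=R(q)$ in the regular action has length $|q|=|Q|=m$. Hence $\G$ is an $(m,|K|)$-metacirculant. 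The genuinely load-bearing step — which I would expect to require the most care — is the reduction of the previous paragraph: establishing that the Sylow $p$-subgroup of $\Aut(\G)$ is metacyclic, regular, and (non-abelian case) split, via the $2$-group nature of tetravalent half-arc-transitive vertex stabilisers together with the absence of a normal $p$-complement and Proposition~\ref{normal complement}. Once that is in place, the construction of $\sigma$ and $\tau$ is routine.
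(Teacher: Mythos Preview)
Your proof is correct and follows essentially the same line as the paper's: identify a Sylow $p$-subgroup $P$ of $A$ that is metacyclic and regular (via the $2$-group vertex stabiliser), rule out a normal $p$-complement in $A$, and invoke Proposition~\ref{normal complement} to conclude $P$ is split. The one notable difference is your treatment of the abelian case: you observe that an abelian metacyclic $p$-group is automatically split and proceed, whereas the paper instead rules this case out entirely by noting that if $P$ were abelian then the inversion automorphism of $P$ would reverse arcs of the Cayley graph, contradicting half-arc-transitivity. Both routes are valid; the paper's gives the extra structural information that $P$ is necessarily non-abelian, while yours is slightly more self-contained. Your explicit construction of $\sigma$ and $\tau$ at the end is also spelled out more fully than in the paper, which simply asserts that a Cayley graph on a split metacyclic group is a metacirculant.
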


\begin{proof}
Clearly we need only prove necessity.
So let $G$ be a metacyclic subgroup of $A = \Aut(\G)$ that acts transitively on $V(\G)$,
and let $P$ be a Sylow $p$-subgroup of $G$.  Then $P$ is metacyclic.
On the other hand, since $\G$ is half-arc-transitive and $4$-valent, the stabiliser $A_v$ of any vertex $v\in V(\G)$
is a $2$-group, and hence $p$ cannot divide $|G_v|$.  Then because $|P| = p^n = |V(\G)|$, we find that $P$ is
regular on $V(\G)$, and therefore $\G$ is an edge-transitive Cayley graph for $P$.
In particular, $P$ is non-abelian, for otherwise the inversion automorphism of $P$ gives an
arc-reversing automorphism of $\G$, which is impossible since $\G$ is half-arc-transitive.
Moreover, $P$ is a Sylow subgroup of $A$, complemented by the Sylow $2$-subgroup $A_v$.
On the other hand, $A_v$ is not normal in $A$, for otherwise $A_v$ would fixe every vertex of $\G$,
so $A_v$ would be trivial, but then $\G$ could not be edge-transitive.  Thus $A$ has no normal $p$-complement,
and it follows from Proposition~\ref{normal complement} (applied to $A$ rather than $G$),
that $P$ is a split metacyclic group, and the Cayley graph $\G$ for $P$ is a metacirculant.\end{proof}

As well as contradicting Theorem~1.3 in \cite{LWS}, the above proof shows that a tetravalent half-arc-transitive weak metacirculant of  odd prime-power order is a Cayley graph for a split metacyclic $p$-group.
Hence we may call a tetravalent half-arc-transitive Cayley graph for a metacyclic $p$-group a {\em $p$-metacirculant}.
Analogously, we define a {\em bi-$p$-metacirculant} to be a bi-Cayley graph over a metacyclic $p$-group.
\medskip

The next theorem shows that most tetravalent vertex- and edge-transitive bi-Cayley graphs over non-abelian metacyclic $p$-groups are normal.
To prove it, we need the concept of a quotient graph. If $G$ is a group of automorphisms of a graph $\G$,
and $N$ is a normal subgroup of $G$, then the {\em quotient graph\/} of $\G$ relative to $N$ is defined
as the graph $\G_N$ whose vertices are the orbits of $N$ on $V(\G)$, and with two orbits adjacent if there exists an
edge in $\G$ between vertices in those two orbits.

\begin{theorem}\label{normal-bi-meta}
Let $\,\G$ be a connected tetravalent bi-Cayley graph over a non-abelian metacyclic $p$-group $H$,
where $p$ is an odd prime, and suppose $R(H)$ is a Sylow subgroup of a subgroup $G$ of $\Aut(\G)$
that acts transitively on both the vertices and the edges of $\,\G$.
Then $H$ is a split metacyclic group, and $R(H)$ is normal in $G$.
Moreover, if $p > 3$ then $R(H)$ is normal in $\Aut(\G)$, and so $\,\G$ is a normal bi-Cayley graph.
\end{theorem}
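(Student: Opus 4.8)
The plan is to run a standard ``normal quotient'' reduction, using the hypothesis that $R(H)$ is a Sylow $p$-subgroup of the edge-transitive group $G$, together with Proposition~\ref{normal complement}. First I would record the easy structural facts: since $\G$ is tetravalent and $G$ acts transitively on vertices and edges, the stabiliser $G_{v}$ is a $\{2,3\}$-group (a point-stabiliser in an edge-transitive action on a $4$-valent graph has order dividing $2^{a}3^{b}$, indeed $|G_v|$ divides $|A_v|$ which for tetravalent edge-transitive graphs is well understood), so $p\nmid |G_v|$ for the odd prime $p$. Hence $R(H)$, of order $|H|=p^{m}$ and index $2|V(\G)|/|V(\G)|$-type computation, is precisely a Sylow $p$-subgroup of $G$; this is what is assumed. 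Since $H$ is non-abelian metacyclic, $R(H)$ is a non-abelian metacyclic Sylow $p$-subgroup of $G$, so Proposition~\ref{normal complement} applies to $G$.

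Next I would rule out the ``split with non-normal Sylow'' alternative in Proposition~\ref{normal complement}, and simultaneously rule out the normal $p$-complement case, to force $R(H)\lhd G$. If $G$ had a normal $p$-complement $C$, then $C$ is a $p'$-group normal in $G$ with $G=R(H)C$; consider the quotient graph $\G_{C}$. Because $C$ is semiregular-ish on the two $R(H)$-orbits? — more carefully, $C\cap R(H)=1$ so $C$ acts on $V(\G)$ with each orbit of size dividing $|C|$, coprime to $p$, while $R(H)$ permutes these orbits; one shows the orbits of $C$ have size $1$ or the quotient $\G_C$ is a tetravalent graph with $R(H)$-quotient structure, and a counting/girth argument (or the fact that a $p$-group acting on a graph of $p$-power order forces $C$ to be trivial because $|V(\G)|=2|H|=2p^m$ and $|C|$ is coprime to $p$, so $|C|\mid 2$, and $|C|=2$ would make $\G$ vertex-transitive via $R(H)\times C$ acting regularly, contradicting edge-transitivity combined with the known structure, or just $|C|=1$) eliminates this case. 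Then in the remaining ``split'' branch of Proposition~\ref{normal complement} we get $H=K\rtimes Q$ and an automorphism $\beta$ with $P\cap O^{p}(G)=K^{\beta}$; here I would argue that $K^{\beta}$, being characteristic-ish in the structure, together with edge-transitivity and the explicit description of $N_{\Aut(\G)}(R(H))$ from Proposition~\ref{normaliser}, actually forces $O^{p}(G)$ to be a $p$-group, hence $O^{p}(G)\le R(H)$ and by minimality $O^{p}(G)=R(H)$, i.e. $R(H)\lhd G$. This also yields that $H$ is split metacyclic, giving the first assertion.

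For the final sentence — that when $p>3$ we in fact have $R(H)\lhd\Aut(\G)$, so $\G$ is a normal bi-Cayley graph — I would apply the part already proved with $G$ replaced by $\Aut(\G)$, provided $R(H)$ is still a Sylow $p$-subgroup of $\Aut(\G)$. The point is that $|\Aut(\G)_v|$ divides a fixed number: for a connected tetravalent edge-transitive graph the vertex-stabiliser order is bounded (by the Goldschmidt/Djokovi\'c–Miller amalgam analysis, $|\Aut(\G)_v|$ divides $2^{a}3^{b}$ with small exponents, and in any case $|\Aut(\G)_v|$ is a $\{2,3\}$-number), so for $p>3$ we have $p\nmid|\Aut(\G)_v|$, whence $|\Aut(\G)|_p=|V(\G)|_p=p^{m}=|R(H)|$ and $R(H)$ is Sylow in $\Aut(\G)$. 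Applying the first part of the theorem to $G=\Aut(\G)$ then gives $R(H)\lhd\Aut(\G)$ directly. The main obstacle I anticipate is the middle step: cleanly excluding the exceptional split case of Proposition~\ref{normal complement} — i.e. showing $K^{\beta}=P\cap O^{p}(G)$ cannot be a proper subgroup of $R(H)$ — which requires combining the edge-transitivity of $G$ with the explicit form of $N_{\Aut(\G)}(R(H))$ in Proposition~\ref{normaliser} (the ``F'' and ``I'' pieces), and possibly a quotient-graph argument analysing the action of $G$ on the orbits of the normal subgroup $K^{\beta}$; the $p>3$ refinement at the end, by contrast, is a short Sylow-order bookkeeping argument once the amalgam bound on $|\Aut(\G)_v|$ is invoked.
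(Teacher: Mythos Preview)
Your plan correctly identifies Proposition~\ref{normal complement} as the engine, and your final paragraph on the $p>3$ case is exactly right: once $R(H)\lhd G$ is established for any $G$ with $R(H)$ Sylow, the $\{2,3\}$-bound on $|A_v|$ lets you take $G=\Aut(\G)$ when $p>3$.

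However, there are two genuine gaps in the core argument.

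\textbf{Ruling out a normal $p$-complement.} Your claim that $|C|\mid 2$ is not right: $C$ is not semi-regular. In fact $G_v\le C$, because $QG_v$ is a $p'$-subgroup of order $|G:R(H)|=|Q|$, forcing $G_v\le Q=C$. The correct contradiction is then structural: since $G_v\le C$, the quotient graph $\G_C$ has valency at most $2$, hence is $K_2$ or a cycle, so $\Aut(\G_C)$ is cyclic or dihedral; but $R(H)\cong R(H)C/C\le G/C$ acts faithfully on $V(\G_C)$, and a non-abelian metacyclic $p$-group cannot embed in a cyclic or dihedral group. This is short, but different from what you wrote.

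\textbf{The split branch does not give normality.} This is the serious gap. Landing in the second alternative of Proposition~\ref{normal complement} tells you only that $H$ is split metacyclic and that $R(H)\cap O^p(G)=K^\beta$ for some $\beta$. It does \emph{not} say $O^p(G)$ is a $p$-group: by definition $O^p(G)$ is generated by all $p'$-elements, so if $G$ is not a $p$-group then $O^p(G)$ is certainly not a $p$-group, and your chain ``$O^p(G)\le R(H)$, hence $O^p(G)=R(H)$, hence $R(H)\lhd G$'' does not follow. The paper uses Proposition~\ref{normal complement} only to conclude that $H$ is split, and then proves $R(H)\lhd G$ by an entirely separate six-step contradiction argument: one shows $G$ has no non-trivial normal $2$-subgroup, that every minimal normal subgroup of $G$ is a $p$-group, analyses $M=O_p(G)$ and $G/M$ via the Frattini quotient and $\mathrm{GL}(2,p)$, deduces that $G$ must be $2$-arc-transitive, reduces to $|R(H):M|=p$ via quotient-graph arguments invoking \cite{LS} and \cite{BFSX}, and finally eliminates $|R(H):M|=p$ using the Cheng--Oxley classification of symmetric graphs of order $2p$ together with Schur multiplier computations for $A_5$, $\PSL(2,7)$ and $\PSL(3,3)$. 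None of this machinery is hinted at in your proposal, and the step you flagged as ``the main obstacle'' is in fact the entire substance of the theorem.
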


\begin{proof}
We begin by noting that $|V(\G)| = 2|H| = 2p^n,$ where $n\geq 3$ because $H$ is non-abelian.
Also $\G$ is vertex-transitive and and edge-transitive (by the hypothesis on $G$),
and therefore $\G$ is either arc-transitive or half-arc-transitive.
Then since the valency of $\G$ is $4$, the stabiliser $A_v$ in $A = \Aut(\G)$ of any vertex $v$ of $\G$
is a group of order $2^{c}3$ or $2^{c}$ for some $c$,
and accordingly $|A| = |V(\G)||A_v| = 2^{c+1}p^n$ or $2^{c+1}3p^n,$
depending on whether or not $\G$ is $2$-arc-transitive.
In the latter case, $A$ is a $\{2,p\}$-group, and therefore soluble (by Burnside's $p^{\a}q^{\b}$ theorem).
The analogous property holds for the subgroup $G$ of $A$, and so either $G$ acts transitively
on the $2$-arcs of $\G$, or $G$ is a $\{2,p\}$-group, and therefore soluble.

Now suppose $G$ has a normal $p$-complement, say $Q$.  Then the product $QG_v$ is a $p'$-group,
which must also be complementary to $R(H)$, so $|QG_v| = |G:R(H)| = |Q|$ and therefore $G_v \le Q$.
Then since $\G$ is $4$-valent and $G$-edge-transitive and $G$-vertex-transitive,
the quotient graph $\G_Q$ is either $1$-valent or $2$-valent, and hence is a cycle or $K_2$,
so $\Aut(\G_Q)$ is cyclic or dihedral. But $Q$ is the kernel of the action of $G$ on $V(\G_Q)$,
so $R(H) \cong R(H)Q/Q \le G/Q \le \Aut(\G_Q)$, and hence this cannot happen.

Thus $G$ has no normal $p$-complement, and so by Proposition~\ref{normal complement}
(again applied to $G$),  it follows that $H$ is a split metacyclic group.

\medskip
Next, we show that $R(H) \lhd G$.  This is a little complicated, so we assume there is a counter-example,
and proceed in several steps to show that cannot happen.
\smallskip\smallskip

{\bf Step 1}.  We prove that $G$ has no non-trivial normal $2$-subgroup.
\smallskip

Suppose $G$ has a non-trivial normal $2$-subgroup $N$.
Then the quotient graph $\G_N$ has valency $2$ or $4$.
If its valency is $2$, then $\G_N$ is a cycle of order $p^n$, but then $R(H)$ cannot act
faithfully on $\G_N$ (because the $p$-subgroup $R(H)$ is non-cyclic), and so $\G_N$ must have valency $4$.
Now if $v$ is any vertex of $\Gamma$, then $v$ and its four neighbours lie in five different orbits of $N$,
so $N_v$ must fix each neighbour of $v$.  By connectedness, $N_v$ fixes every vertex of $\Gamma$,
and so $N_v$ is trivial, and therefore $N$ is semi-regular on $\G$.
In particular, $|N|$ must divide $|V(\G)| = 2p^n$, and so $|N| = 2$.
Also the order of $\G_N$ is $p^n$, and $N$ is the kernel of the action of $G$ on $\G_N$,
so $\overline{G} = G/N$ is a group of automorphisms of $\G_N$.

Next, let $g$ be the involutory generator of $N$.  Then $g$ cannot preserve the orbits $H_0$ and $H_1$
of $R(H)$, since each has odd size $p^n$, and it follows that $\lg R(H),g \rg = N \rtimes R(H)$ is transitive on vertices.
Moreover, the orbits of $N$ form a system of imprimitivity for $G$ on $V(\G)$,
and it follows that no orbit of $N$ can be contained in $H_0$ or $H_1$, for the same reason.
In turn, this implies that $R(H)$ acts transitively and hence regularly on the vertices of $\G_N$,
so $\G_N$ is a Cayley graph for $R(H) \cong H$.

In particular, since its valency $4$ is less than $2p$, it follows from \cite[Corollary~1.2]{LS}
that $\G_N$ is a normal Cayley graph, with $\overline{R(H)} = R(H)N/N$ normal in $\Aut(\G_N)$.
Hence $R(H)N/N$ is also normal in $G/N$, and so $R(H)N$ is normal in $G$.
But $R(H)$ has index $2$ in $R(H)N$, and is therefore a normal Sylow $p$-subgroup of $R(H)N$,
so is characteristic in $R(H)N$, and hence $R(H)$ is normal in $G$, contradiction.
\medskip

{\bf Step 2}.  We show that every minimal normal subgroup of $G$ is a $p$-group.
\smallskip

Here we make use of \cite[Lemma~3.1]{Zhou-Feng-jamc}, which shows that if $J$ is an arc-transitive
group of automorphisms of a tetravalent connected graph of order $2p^m$ where $m > 1$ (and $p$ is prime),
then every minimal normal subgroup of $J$ is solvable.
In particular, this is true for $G$, if $G$ acts transitively on the arcs of $\G$.
On the other hand, if $G$ does not act arc-transitively on $\G$, then
by our earlier observations, $G$ is a $\{2,p\}$-group, and so $G$ itself is soluble.
Hence in both cases, a minimal normal subgroup of $G$ is soluble, and therefore an elementary abelian group.
But this cannot be a $2$-group (by step 1), and cannot be a $3$-group (for otherwise it
would be generated by an element of order $3$ fixing a vertex),
and thus every minimal normal subgroup of $G$ is a $p$-group.
\medskip

{\bf Step 3}.  Let $M = O_p(G)$ be the largest normal $p$-subgroup of $G$, and consider $C_G(M)$ and  $G/M$.
\smallskip

Let $C=C_G(M)$. Then conjugation of $M$ by $G$ makes $G/C$ isomorphic to a subgroup of $\Aut(M)$.

Now suppose that $G/M$ has a normal $2$-subgroup contained in $CM/M$, say $L/M$.
Then $M \leq L\leq CM$, but since $C\unlhd CM$, and $M$ is a $p$-group, every $2$-subgroup of $CM$ is
contained in $C$, and so every Sylow $2$-subgroup of $L$ is contained in $C=C_G(M)$, and therefore $L=M\times Q$,
where $Q$ is a Sylow $2$-subgroup of $L$.  But now this makes $Q$ characteristic in $L$ and hence normal in $G$,
which is impossible since $G$ has no non-trivial normal $2$-subgroup.
Thus $G/M$ has no such subgroup.

Next let $\Aut^\Phi(M)=\lg\,\a\in \Aut(M)\ |\ g^\a\Phi(M)=g\Phi(M), \,\forall g\in M \rg$, where $\Phi(M)$ is the Frattini subgroup of $M$. Then $\Aut^\Phi(M)$ is a normal $p$-subgroup of $\Aut(M)$, with $\Aut(M)/\Aut^\Phi(M)\leq\Aut(M/\Phi(M))$;
see, for example, \cite[pp.\,81--83]{Federico}.
Also let $K$ be the subgroup of $G$ containing $C$ for which $K/C=(G/C)\cap\Aut^\Phi(M)$.
Then $K/C$ is a normal $p$-subgroup of $G/C$, and $G/K\leq\Aut(M/\Phi(M))$.

Now suppose for the moment that $K$ is a $p$-group. Then $K\leq M \leq R(H)$, because $M = O_p(G)$.
Also $M \ne R(H)$, for otherwise $R(H) \lhd G$, and hence the index of each of $K$ and $M$ in $R(H)$ is divisible by~$p$.
On the other hand, $R(H)$ is metacyclic, and therefore $M$ is metacyclic, or possibly cyclic.
But if $M$ is cyclic, then $M/\Phi(M)\cong C_p$ and so $\,G/K \leq \Aut(M/\Phi(M)) \cong C_{p-1},\,$
and which implies that $K$ is a Sylow $p$-subgroup of $G$, and so $R(H)=K=M \lhd G$, contradiction.
Hence $M$ is a non-cyclic metacyclic $p$-group.  It follows that $M/\Phi(M)\cong C_p\times C_p$,
so $\,G/K\leq\Aut(M/\Phi(M))\cong\Aut(C_p\times C_p)\cong {\rm GL}(2, p),\,$
and then since $|{\rm GL}(2, p)|=(p^2-1)(p^2-p)$ is not divisible by $p^2$, we find that  $|R(H): K|=p$, and so $K=M$.
Thus we have shown that if $K$ is a $p$-group, then $K = M$ and $G/M$ is isomorphic to a subgroup of ${\rm GL}(2, p)$.
In particular, this happens if $C \le M$ (for then $C$ is a $p$-group and hence so is $K$).
\medskip

{\bf Step 4}.  We show that $G$ is $2$-arc-transitive on $\G$.
\smallskip

Suppose that $G$ does not act transitively on the $2$-arcs of $\G$. Then $G$ is a $\{2,p\}$-group.
This implies that $CM = M$, for otherwise if $L/M$ were a minimal normal subgroup of $G/M$ contained
in $CM/M$, then by the maximality of $M$ as a normal $p$-subgroup of $G$, we would find that $L/M$ is
a normal $2$-group of $G/M$, which is impossible by what we showed at the beginning of Step 3.
Thus  $C \leq M$, and hence also $K = M$ and $G/M \lesssim {\rm GL}(2, p)$,
by what we showed at the end of Step 3.  Then since $G/M$ is a $\{2,p\}$-group (but not a $2$-group),
it follows that the image of $G/M$ in ${\rm PGL}(2, p) = {\rm GL}(2, p)/Z({\rm GL}(2, p))$ is isomorphic
to a subgroup of $C_p\rtimes C_{p-1}$, and hence has a cyclic normal subgroup of order $p$.
In turn, since $Z({\rm GL}(2, p)) \cong \mz_p^* \cong C_{p-1}$, this implies that $G/M$ has a cyclic normal
Sylow $p$-subgroup of order $p$.  But that must be $R(H)/M$, and so once again $R(H) \lhd G$, contradiction.
\medskip

{\bf Step 5}.  By considering the quotient graph $\G_M$, we show that $|R(H):M| = p$.
\smallskip

Since $M$ is a proper subgroup of $R(H)$, we know that $M$ has at least $2p$ orbits on $V(\G)$.
Also by the $2$-arc-transitivity of $G$ on $\G$, we know that $M$ is the kernel of the action of $G$ on $V(\G_M)$,
and $G/M$ is a $2$-arc-transitive group of automorphisms of $\G_M$.

Now suppose that $|R(H)/M|>p$.  Then $|V(\G_M)| = 2p^2$, and so we can use \cite[Lemma~3.1]{Zhou-Feng-jamc}
again, to conclude that $G/M$ has a minimal normal subgroup $N/M$ that is soluble.
This cannot be a $p$-group or a $3$-group, for the same reasons as before, and so must be a $2$-group.
Also the number of orbits of $N/M$ of $V(\G_M)$ is $|R(H)/M|>p>2$, and hence the same is true for the
number of orbits of $N$ on $V(\G)$,
and as $G$ is $2$-arc-transitive on $\G$, the quotient graph $\G_N$ has valency $4$.
Accordingly, just as in step 1, we find that $N$ acts semi-regularly on $V(\G)$, and $|N|=2|M|$.
Moreover, $R(H)N/N$ acts regularly on $V(\G_N)$, and so $\G_N$ is a tetravalent Cayley graph
for $R(H)N/N$.  Also $R(H) \cap N = M$, and therefore $R(H)N/N \cong R(H)/(R(H) \cap N) \cong R(H)/M$,
which is a metacyclic $p$-group.
If $R(H)/M$ is abelian, then by \cite[Corollary~1.3]{BFSX} we find that $G_N$ is a normal Cayley
graph, with $R(H)N/N \lhd G/N$, and therefore $R(H) \lhd G$, contradiction.
On the other hand, if $R(H)/M$ is non-abelian, then once again by \cite[Corollary~1.2]{LS}
we have $R(H)N/N\unlhd G/N$, contradiction. Thus $|R(H)/M|=p$.
\medskip

{\bf Step 6}.  This is the last step, in which we show that $|R(H):M| \ne p$.
\smallskip

As $G$ is $2$-arc-transitive on $\G$, and $|R(H)/M|=p$, again we find that $M$ is the kernel
of the action of $G$ on $V(\G_M)$, and that $G/M$ is a $2$-arc-transitive group of automorphisms
of $\G_M$, but this time $|V(\G)| = 2|R(H)/M| = 2p$, and so $\G_M$ is
one of the symmetric graphs of order $2p$ classified in \cite{cheng-2p}.
Indeed from Theorem~4.2 and Table~1 in \cite{cheng-2p}, we can see that there are only three possibilities, as follows:
\\[+4pt]
${}$ \hskip 2cm  $\bullet$ \ $p=5\ $ and $\ A_5\leq G/M\leq S_5\times C_2$, \\
${}$ \hskip 2cm  $\bullet$ \ $p=7\ $ and $\ \PSL(2,7)\leq G/M\leq\PGL(2, 7)$, \\
${}$ \hskip 2cm  $\bullet$ \ $p=13\ $ and $\ \PSL(3,3)\leq G/M\leq\PSL(3, 3). C_2$.
\\[-4pt]

Now if $C\leq M$, then $G/M \lesssim {\rm GL}(2, p)$, which implies that $A_5 \lesssim {\rm GL}(2, 5)$,
or $\PSL(2,7) \lesssim {\rm GL}(2, 7)$, or $\PSL(3,3)\lesssim {\rm GL}(2, 13)$, respectively, but none of these
is possible (as can be shown with the help of Magma~\cite{BCP} if necessary), and it follows that  $M < CM$.

Again let $L/M$ be a minimal normal subgroup of $G/M$ contained in $CM/M$.
By step 3 we know that $L/M$ cannot be a $2$-group, and so $L/M$ must be $A_5$, $\PSL(2,7)$ or $\PSL(3,3)$.
In particular, $L/M$ is non-abelian simple, and then because $L/M=(L/M)'=L'M/M$,
we find that $L'M=L$.

If $L' \ne L$, then $M \not\leq L'$ and so $L'\cap M<M$,  with $L'/(L'\cap M)\cong L'M/M = L/M$,
and therefore $|L'|=|L/M||L'\cap M|$.  Also in each of the three cases listed above, the $p$-part of $|L/M|$ is $p$,
and it follows that every Sylow $p$-subgroup of $L'$ has order $p|L'\cap M| < p|M| =|R(H)|$,
Thus $|L'\cap M| < |R(H)|/p$, and we find that $L'$ has at least $p$ orbits on $V(\G)$.
But also $L'$ is characteristic in $L$ and hence normal in $G$, and so the $2$-arc-transitivity of $G$ on $\G$
implies that $L'$ is semi-regular on $V(\G)$, and so $|L'|$ divides $|V(\G)| = 2|R(H)| = 2p^n$.
This makes $L'$ a $\{2,p\}$-group, and therefore $L'$ is soluble, which is impossible because $L/M$ is non-abelian simple.
 Hence $L = L'$, so $L$ is perfect.

Next, if $M$ is abelian, then $M\leq C_G(M) = C$ and so we may suppose that $L\leq C$,
in which case $M \le L \leq C$ and therefore $M \leq Z(L)$.  Since also $L = L'$, it follows
that $M$ is isomorphic to a subgroup of the Schur multiplier of the simple group $L/M$.
The Schur multipliers of $A_5$, $\PSL(2,7)$ and $\PSL(3,3)$ are all cyclic (of orders $2$, $2$ and $1$),
however, while $M$ is not (since $M/\Phi(M)\cong C_p\times C_p$), contradiction.
Hence $M$ is non-abelian.

To complete this step, we consider the subgroup $C_L(M)$, which is normal in $L$.
The quotient $L/C_L(M)$ is isomorphic to a subgroup of $\Aut(M)$,
which is soluble by \cite[Lemmas~2.4,2.6,2.7]{Sasaki-meta}, and so $L/C_L(M)$ is soluble.
But $L$ itself is not soluble, and it follows that $C_L(M)$ cannot be soluble.
Next, because $L/M$ is simple and $C_L(M)M/M \lhd L/M$, we know that $C_L(M)M = L$ or $M$,
but the latter cannot occur since $M$ is soluble.
Thus $C_L(M)M = L$, and so $C_L(M)/(C_L(M)\cap M) \cong C_L(M)M/M=L/M$.

Also $C_L(M)\cap M \ne M$ since $M$ is non-abelian, and so just as we did above for $L'$,
we find that every Sylow $p$-subgroup of $C_L(M)$ has order $p|C_L(M)\cap M| < p|M| = |R(H)|$,
and hence $C_L(M)$ has at least $p$ orbits on $V(\G)$.  But also $C_L(M) = L\cap C_G(M) \lhd G$,
and so $C_L(M)$ is semi-regular on $V(\G)$. In particular,  $|C_M(L)|$ divides $|V(\G)| = 2|R(H)| = 2p^n$,
and so $C_L(M)$ is soluble, contradiction.
\medskip

This final contradiction eliminates any possibility of a counter-example, and therefore $R(H) \lhd G$.
\medskip

Finally, because $\G$ is $4$-valent, the stabiliser $A_v$ of any vertex is a $\{2, 3\}$-group,
and then since $R(H)$ acts regularly on each part of $\G$, it follows that the index $|A:R(H)|$ is of the form $2^{a}3^{b}$.
Hence if $p > 3$, then $R(H)$ is a Sylow $p$-subgroup of $A$, and so we can take $G = A$,
and find that $R(H) \lhd A$, so that the bi-Cayley graph $\G$ is normal.
\end{proof}

We remark that $R(H)$ is not always normal in $\Aut(\G)$ when $p=3$.
A counter-example is the bi-Cayley graph $\BiCay(H,\emptyset,\emptyset,S)$,
where $H$ is the metacyclic group $\lg\, a,b\ |\ a^9 = b^3 = 1, \, b^{-1}ab=a^4\,\rg \cong C_9 \rtimes_4 C_3$,
and $S = \{1, a, ab, a^4b^2\}$.  In fact, a computation using Magma~\cite{BCP} shows that this
graph of order 54 is $2$-arc-transitive, with automorphism group of order 1296, but is not normal as a bi-Cayley graph.

\smallskip
This example provided the idea for construction of the families of tetravalent
half-arc-transitive bi-$p$-metacirculants appearing in the next two lemmas.
The members of both families are constructed from metacyclic groups of the form
$C_{p^2} \rtimes C_p$ with presentation $\,\lg\, a, b\ |\ a^{p^2}=b^{p}=1, \, b^{-1}ab=a^{1+p} \,\rg,\,$
for odd primes $p$.  Note that for every such $p,$ the centre of a group of this form is the cyclic subgroup of order $p$
generated by $a^p,$ and the elements of order dividing $p$ form the index $p$ subgroup
generated by $a^p$ and $b$.

\begin{lemma}\label{bi-p-metafamily1}
For any odd prime $p$, let $H$ be the metacyclic group $\lg\, a, b\ |\ a^{p^2}=b^{p}=1, \, b^{-1}ab=a^{1+p} \,\rg$
of order $p^3$, and then let $\mathcal{G}_p=\BiCay(H, \emptyset, \emptyset, S)$ where $S=\{1, a^2, a^{p}b^2, a^{2-p}b^2\}.$
Then $\mathcal{G}_p$ is a $4$-valent edge-regular half-arc-transitive bi-$p$-metacirculant 
over $C_{p^2}\rtimes_{1+p}C_p$, and is also a Cayley graph, for all $p$.
\end{lemma}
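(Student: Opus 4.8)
The plan is to realise $\mathcal{G}_p$ as a bi-Cayley graph that is normal edge-transitive, apply the characterisations proved earlier, and then control $\Aut(\mathcal{G}_p)$ with Theorem~\ref{th-normal-bi-meta}. Write $A=\Aut(\mathcal{G}_p)$ and $X=N_A(R(H))$, and recall that $Z(H)=\langle a^p\rangle$ while $\langle a^p,b\rangle$ is the subgroup of elements of order dividing $p$. Since $p$ is odd, $2$ is a unit modulo $p^2$ and modulo $p$, so $a^2$ generates $\langle a\rangle$ and $b^2$ generates $\langle b\rangle$; hence $S$ generates $H$ and $\mathcal{G}_p$ is connected, of valency $|S|=4$, by Proposition~\ref{n}(a).

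Next I would write down two explicit automorphisms. First, $\alpha\colon a\mapsto a^{-1},\ b\mapsto b$ is an involution of $H$ with $S^{\alpha}=a^{-2}S$, so $\sigma_{\alpha,a^{2}}\in{\rm F}$ (and it fixes $1_0$); a short computation, using that $a^p$ is central, shows $\sigma_{\alpha,a^{2}}$ has order $2$ and acts on $\mathcal{G}_p(1_0)=\{s_1:s\in S\}$ as the product of the transpositions $(1_1\ (a^2)_1)$ and $((a^pb^2)_1\ (a^{2-p}b^2)_1)$. Secondly, I would produce an involution $\beta\in\Aut(H)$ and elements $x,y\in H$ with $y=(x^{-1})^{\beta}$, with $x_1$ lying in the $\sigma_{\alpha,a^{2}}$-orbit on $\mathcal{G}_p(1_0)$ not containing $1_1$, and with $S^{\beta}=x^{\beta}S^{-1}x$; then $\delta_{\beta,x,y}\in{\rm I}$, it is an involution interchanging $H_0$ and $H_1$, and since it normalises $R(H)$ but does not lie in it, $R(H)\langle\delta_{\beta,x,y}\rangle$ has order $2|H|=|V(\mathcal{G}_p)|$ and acts transitively on $V(\mathcal{G}_p)$ by Proposition~\ref{normaliser}(a). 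Being transitive of order $|V(\mathcal{G}_p)|$, it is regular, so $\mathcal{G}_p$ is a Cayley graph; in particular $X$, and hence $A$, is vertex-transitive.

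Now I would pin down $X_{1_0}$ exactly. As $H$ has odd order, squaring is a bijection of $H$, so every element has a unique square root; using this (the square root of $a^{2-p}b^2$ is $ab$, and forcing $\varphi(a)=ab$ forces $\varphi(b)=b^{-1}$, which does not respect the defining relation of $H$) one checks that $\Aut(H,S)=1$, and similarly that there is no $\varphi\in\Aut(H)$ with $S^{\varphi}=g^{-1}S$ for $g\in\{a^pb^2,a^{2-p}b^2\}$. By Lemma~\ref{no-3-arc-tran} the first fact gives $X_{1_01_1}=1$, and together with the second it gives $X_{1_0}={\rm F}=\langle\sigma_{\alpha,a^{2}}\rangle\cong C_2$, so $X_{1_0}$ has exactly two orbits of equal size on $\mathcal{G}_p(1_0)$. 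Together with an element $\delta_{\varphi',x',1}\in{\rm I}$ — that is, an automorphism $\varphi'$ of $H$ with $S^{\varphi'}=S^{-1}x'$ for some $x'$ with $x'_1$ in the orbit not containing $1_1$ — the two conditions of Proposition~\ref{normal-half-arc} are met, so $X$ is transitive on the vertices and edges but not on the arcs of $\mathcal{G}_p$.

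Finally, $\mathcal{G}_p$ is a connected tetravalent vertex- and edge-transitive bi-Cayley graph over the non-abelian metacyclic $p$-group $H$, and its vertex-stabiliser $A_v$ is a $\{2,3\}$-group; when $p>3$ this makes $R(H)$ a Sylow $p$-subgroup of $A$, so Theorem~\ref{th-normal-bi-meta} gives $R(H)\lhd A$, i.e.\ $A=X$. When $p=3$ the same conclusion follows once one rules out $2$-arc-transitivity, which I would do by a local cycle count (examining the $4$- and $6$-cycles through the edge $\{1_0,1_1\}$, as in the proof of Theorem~\ref{edge-regular-graphs}) showing that the stabiliser in $A$ of the arc $(1_0,1_1)$ is already trivial. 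In all cases $A=X$, so $\mathcal{G}_p$ is half-arc-transitive, and $|A|=|V(\mathcal{G}_p)|\cdot|A_{1_0}|=2p^3\cdot 2=4p^3=|E(\mathcal{G}_p)|$, so $A$ is regular on edges; thus $\mathcal{G}_p$ is edge-regular, completing all the assertions. The main obstacle is the explicit bookkeeping in $H=C_{p^2}\rtimes_{1+p}C_p$ — verifying $\Aut(H,S)=1$ and that $|X_{1_0}|=2$, and constructing $\beta$ with $S^{\beta}=x^{\beta}S^{-1}x$ — together with, for $p=3$, the supplementary cycle count ruling out a larger automorphism group; these steps are elementary but must be carried out carefully, making essential use of the centrality of $a^p$ and of the uniqueness of square roots in the odd-order group $H$.
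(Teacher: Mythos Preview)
Your outline follows the paper's strategy closely, and the logical skeleton is sound: exhibit $\sigma_{\alpha,a^2}$, exhibit an element of ${\rm I}$ to get vertex-transitivity, pin down $X_{1_0}$ exactly, invoke Proposition~\ref{normal-half-arc}, and then use Theorem~\ref{normal-bi-meta} to conclude $A=X$. However, several of the steps you label ``I would'' are precisely where the work lies, and one of them is stated in a form that may not be achievable.

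\textbf{The missing automorphism.} You never actually construct the element of ${\rm I}$ (your $\beta$, or your $\varphi'$). In the paper this is the automorphism $\beta\colon a\mapsto a^{-(p+1)},\ b\mapsto a^{p}b$, for which one checks $S^\beta=S^{-1}(a^{p}b^{2})$, giving $\delta_{\beta,a^{p}b^{2},1}\in{\rm I}$. Without this explicit construction, neither edge-transitivity nor vertex-transitivity of $X$ is established, and condition~(b) of Proposition~\ref{normal-half-arc} is not verified.

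\textbf{The Cayley-graph step.} You require $\delta_{\beta,x,y}$ to be an \emph{involution}, which forces $\beta^{2}=1$ and $y=(x^{-1})^{\beta}$. The paper's $\beta$ is not an involution (indeed $\beta^{2}\colon a\mapsto a^{1+2p}$), and it is not clear that any involutory automorphism of $H$ satisfies $S^{\beta}=x^{\beta}S^{-1}x$ for a suitable $x$. The paper avoids this by the weaker observation that $\delta_{\beta,a^{p}b^{2},1}^{\,2}=R(a^{p}b^{2})\in R(H)$; then $\langle R(H),\delta_{\beta,a^{p}b^{2},1}\rangle$ has order $2|H|$ and is vertex-transitive, hence regular. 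You should use this argument rather than insisting on an involutory $\delta$.

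\textbf{The case $p=3$.} Ruling out $2$-arc-transitivity alone does not give $R(H)\lhd A$: you still need $R(H)$ to be a Sylow $3$-subgroup of $A$, i.e.\ $|A_v|$ coprime to $3$. Your cycle-count plan would indeed yield $A_{1_01_1}=1$, hence $|A_{1_0}|\le 4$, which then feeds into Theorem~\ref{normal-bi-meta}; but this computation is genuinely nontrivial for a graph on $54$ vertices and is not carried out. The paper simply settles $p=3$ by a {\sc Magma} check.
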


\begin{proof}
First, it is easy to see that $H$ has an automorphism $\a$ taking $a$ to $a^{-1}$, and $b$ to $b$,
and then $S^\a=\{1, a^{-2}, a^{-p}b^2, a^{p-2}b^2\}=a^{-2}S.\,$
By Proposition~\ref{normaliser}, it follows that $\s_{\a, a^{2}}$ is an automorphism of $\mathcal{G}_p$
that fixes $1_0$, and interchanges $1_1$ with $(a^2)_1$, and $(a^{2-p}b^2)_1$ with $ (a^{p}b^2)_1$.
Next, $a^{-(p+1)}$ has order $p^2$, since $p+1$ is coprime to $p^2$, and $a^{p}$ centralises $b$
(indeed $Z(H) = \lg a^p \rg$), so $a^{p}b$ has order $p$, and it follows that $a' = a^{-(p+1)}$ and $b' = a^{p}b$
satisfy the same relations as $a$ and $b$.  Hence there exists an automorphism $\b$ of $H$ that takes
$a$ to $a^{-(p+1)}$ and $b$ to $a^pb$, and then $S^\b = \{1, a^{-2(1+p)}, a^{p}b^2, a^{p-2}b^2\} =S^{-1}(a^{p}b^2),$
so by Proposition~\ref{normaliser}, we find that $\d_{\b, a^{p}b^2, 1}$ is an automorphism of $\mathcal{G}_p$
that takes $(1_0, 1_1)$ to $((a^{p}b^2)_1, 1_0)$.

In particular, $\d_{\b, a^{p}b^2, 1}$ takes a vertex of $H_0$ to a vertex of $H_1$, so $\lg R(H), \d_{\b, a^{p}b^2, 1}\rg$
is transitive on the vertices of $\mathcal{G}_p$.
Similarly, the orbit of the arc $(1_0,1_1)$ under $\lg \s_{\a, a^{2}}, \d_{\b, a^{p}b^2, 1}\rg$ includes
$(1_0,(a^2)_1)$ and also $((a^{p}b^2)_1,1_0)$ and $((a^{2-p}b^2)_1,1_0)$,
and so  $G = \lg R(H), \s_{\a, a^2}, \d_{\b, a^{p}b^2, 1}\rg$ acts transitively on the edges of $\mathcal{G}_p$.

For $p = 3$, an easy computation with {\sc Magma}~\cite{BCP} shows that $\Aut(\mathcal{G}_3) = G$,
which has order  $108$ ($= 4p^3$), and that $\mathcal{G}_3$ is half-arc-transitive.
Hence we may suppose that $p > 3$.
Then also because $\mathcal{G}_p$ is $4$-valent, the stabiliser $G_v$ of any vertex is a $\{2, 3\}$-group,
and it follows that $R(H)$ is a Sylow $p$-subgroup of $G,$ and hence $R(H)$ is normal in $\Aut(\mathcal{G}_p),$
by Theorem~\ref{normal-bi-meta}.

Now suppose $\mathcal{G}_p$ is arc-transitive.
Then since $R(H) \lhd \Aut(\mathcal{G}_p),$ also $\mathcal{G}_p$ is normal locally arc-transitive,
and so by Proposition \ref{normal-arc-tran}, some automorphism $\g$ of $H$ 
takes $S$ to $S^{-1}$.
To consider this possibility, note that the non-trivial elements $a^2$, $a^{p}b^2$ and $a^{2-p}b^2$
in $S$ have orders $p^2$, $p$ and $p^2$, respectively, and their inverses are  $a^{-2}$, $a^{-p}b^{-2}$
and $(a^{2-p}b^2)^{-1} = b^{-2}a^{p-2} = a^{(p-2)(1+p)^2}b^{-2} = a^{(p-2)(1+2p)}b^{-2} = a^{(-2-3p)}b^{-2}.$
Hence $\g$ takes $(a^2,a^{2-p}b^2)$ to either $(a^{-2},a^{(-2-3p)}b^{-2})$ or $(a^{(-2-3p)}b^{-2},a^{-2}),$
but then $\g$ takes $a^{p}b^2 = a^{2(p-1)}a^{2-p}b^2$ to
either $a^{2(1-p)}a^{(-2-3p)}b^{-2} = a^{-5p}b^{-2}$ or  $a^{2(1-p)}a^{-2} = a^{-2p},$
a contradiction in both cases.
Hence $\mathcal{G}_p$ is not arc-transitive, and is therefore half-arc-transitive.

Next, if the stabiliser in $\mathcal{G}_p$ of the edge $\{1_0,1_1\}$ is non-trivial, then it must contain $\s_{\g,1}$
for some non-trivial automorphism $\g$ of $H$ that preserves $S$, and then $\g$ has to swap $a^2$ with $a^{2-p}b^2,$
but in that case $\g$ takes $a^{p}b^2 = a^{2(p-1)}a^{2-p}b^2$ to $(a^{2-p}b^2)^{p-1}a^{2},$
which is of the form $a^{\xi}b^{-2}$ for some $\xi$, and therefore $\g$ cannot fix $a^{p}b^2$.
Hence $\mathcal{G}_p$ is edge-regular as well.

Finally, recall that the subgroup $J = \lg R(H), \d_{\b, a^{p}b^2, 1} \rg$ act transitively
on the vertices of $\mathcal{G}_p$.  It is also easy to see that $(\d_{\b, a^{p}b^2, 1})^2 = R(a^{p}b^2) \in R(H)$,
and so $(\d_{\b, a^{p}b^2, 1})^{2p} = 1$,  and then because $R(H) \lhd A$ it follows
that $J = \lg R(H), (\d_{\b, a^{p}b^2, 1})^p \rg \cong H \rtimes C_2,$ of order $2p^3,$ and therefore $J$ acts regularly
on $\mathcal{G}_p$.  Thus $\mathcal{G}_p$ is a Cayley graph for $H \rtimes C_2$.
\end{proof}

The second family can be handled in a similar way, but it differs from the first one at a few points.

\begin{lemma}\label{bi-p-metafamily2}
For any prime $p$ congruent to $1$ modulo $4$, let $H$ be the same metacyclic group of order $p^3$
as used in Lemma {\em\ref{bi-p-metafamily1}}, namely $\lg\, a, b\ |\ a^{p^2}=b^{p}=1, \, b^{-1}ab=a^{1+p} \,\rg,$
let $\,s=\frac{1+\ld-\ld p}{2}\,$ where $\ld$ is a square root of $-1$ in $\mz_{p^2}$, and
let $\mathcal{H}_p=\BiCay(H, \emptyset, \emptyset, T)$ where $T=\{1, a, a^{s}b^2, a^{1-s}b^2\}.$
Then  $\mathcal{H}_p$ is a $4$-valent half-arc-transitive bi-$p$-metacirculant 
over $C_{p^2}\rtimes_{1+p}C_p$, but is not a Cayley graph, for all $p$.
\end{lemma}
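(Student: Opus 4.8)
The plan is to follow the template of the proof of Lemma~\ref{bi-p-metafamily1}. Write $H=\lg a,b\rg$ for the given metacyclic group, and set $S=T=\{1,a,a^sb^2,a^{1-s}b^2\}$. Since $p\equiv 1$ mod $4$ we have $p\ge 5$, and a square root $\ld$ of $-1$ in $\mz_{p^2}$ exists (lift a square root of $-1$ mod $p$ by Hensel's lemma); as $p$ is odd, $2$ is a unit mod $p^2$, so $s=\tfrac12(1+\ld-\ld p)$ is well defined, and one checks that the four elements of $T$ are distinct, so $\mathcal{H}_p$ is $4$-valent. Connectedness is immediate: $T$ contains $a$ and $a^sb^2$, hence $b^2$ and therefore $b$, so $\lg T\rg=H$. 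It then remains to prove that $N_{\Aut(\mathcal{H}_p)}(R(H))$ is transitive on vertices and edges (so $\mathcal{H}_p$ is arc- or half-arc-transitive), that $\mathcal{H}_p$ is not arc-transitive, and --- the genuinely new point --- that $\mathcal{H}_p$ is not a Cayley graph.

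For the transitivity I would exhibit two automorphisms of $H$. First, $\a\colon a\mapsto a^{-1},\ b\mapsto b$ is an automorphism with $T^\a=a^{-1}T$, so by Proposition~\ref{normaliser}, $\s_{\a,a}$ is an automorphism of $\mathcal{H}_p$ fixing $1_0$, interchanging $1_1$ with $a_1$ and $(a^sb^2)_1$ with $(a^{1-s}b^2)_1$, and hence having exactly two orbits of size $2$ on $\G(1_0)$. Second, for a suitable $\ell\in\mz_p$ the map $\b\colon a\mapsto a^{\ld(1+p)},\ b\mapsto a^{p\ell}b$ is an automorphism of $H$ satisfying $T^\b=T^{-1}(a^sb^2)$; the verification of this coset identity is exactly where the value $s=\tfrac12(1+\ld-\ld p)$ enters, the key facts being $2s-1=\ld(1-p)$ and $\ld^2\equiv-1$ mod $p^2$, so that $(2s-1)\ld\equiv p-1$ mod $p^2$. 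By Proposition~\ref{normaliser}, $\d_{\b,a^sb^2,1}$ is then an automorphism of $\mathcal{H}_p$ taking $(1_0,1_1)$ to $((a^sb^2)_1,1_0)$, and since it interchanges the parts $H_0$ and $H_1$, the group $\lg R(H),\s_{\a,a},\d_{\b,a^sb^2,1}\rg\le N_{\Aut(\mathcal{H}_p)}(R(H))$ is vertex-transitive; moreover the orbit of the edge $\{1_0,1_1\}$ under this group contains all four edges at $1_0$, hence all edges. So $\mathcal{H}_p$ is vertex- and edge-transitive.

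Next I would show $\mathcal{H}_p$ is not arc-transitive. Applying Theorem~\ref{normal-bi-meta} with $G=A=\Aut(\mathcal{H}_p)$ --- which is vertex- and edge-transitive by the previous step, and in which $R(H)$ is a Sylow $p$-subgroup because $p\ge 5$ and the vertex-stabiliser of a $4$-valent graph is a $\{2,3\}$-group --- gives $R(H)\lhd A$. If $\mathcal{H}_p$ were arc-transitive it would then be normal locally arc-transitive, so by Proposition~\ref{normal-arc-tran}(a) some $\g\in\Aut(H)$ would satisfy $T^\g=T^{-1}$; this is ruled out by a finite congruence check of the same kind as for $\mathcal{G}_p$ in Lemma~\ref{bi-p-metafamily1} --- working in $H/\Phi(H)\cong C_p\times C_p$, where $\Phi(H)=\lg a^p\rg$, one finds the induced map must act as $-1$, which is impossible since no automorphism of $H$ inverts the image of $b$ (the relation $b^{-1}ab=a^{1+p}$ forces every automorphism of $H$ to send $b$ to an element with $b$-exponent $\equiv1$ mod $p$). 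Hence $\mathcal{H}_p$ is half-arc-transitive.

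The harder part is showing $\mathcal{H}_p$ is not a Cayley graph. Since $R(H)\lhd A$ and $|R(H)|=p^3$ is the full $p$-part of $|A|$, $R(H)$ is the unique Sylow $p$-subgroup of $A$; so any subgroup $K\le A$ acting regularly on the $2p^3$ vertices contains $R(H)$ as its index-$2$ Sylow $p$-subgroup, and since $R(H)$ has odd order while $2$ divides $|K|$, the group $K$ contains an involution $\theta\notin R(H)$, which therefore interchanges the two parts $H_0,H_1$ (the orbits of $R(H)\lhd K$). Hence $\mathcal{H}_p$ is a Cayley graph if and only if $A$ contains an involution interchanging $H_0$ and $H_1$; by Proposition~\ref{normaliser} any such element has the form $\d_{\g,u,v}$ with $\g\in\Aut(H)$ and $u,v\in H$, and it is an involution precisely when $\g^2=1$ and $v=(u^{-1})^\g$, in which case $\d_{\g,u,v}\in{\rm I}$ reduces to $T^\g=u^\g T^{-1}u$. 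So I must show there is no $\g\in\Aut(H)$ with $\g^2=1$ and no $u\in H$ with $T^\g=u^\g T^{-1}u$. The plan is: (i) note that $\Aut(H)=P_0\rtimes C$ with $P_0$ a normal $p$-subgroup of order $p^3$ and $C\cong C_{p-1}$, whence every involution of $\Aut(H)$ is conjugate to $\a$; (ii) use $\BiCay(H,\emptyset,\emptyset,T)\cong\BiCay(H,\emptyset,\emptyset,T^\psi)$ (Proposition~\ref{n}) to reduce to the cases $\g=1$ and $\g=\a$, at the cost of replacing $T$ by a conjugate; and (iii) check that for $\g=1$ the equation $T=uT^{-1}u$ forces (by comparing $b$-exponents mod $p$ and then $a$-exponents mod $p^2$) a congruence $\ld\equiv\pm1$ mod $p^2$, and that for $\g=\a$ the equation $T^\a=u^\a T^{-1}u$ likewise forces $\ld\equiv\pm1$ mod $p^2$; both contradict $\ld^2\equiv-1$ mod $p^2$. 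The main obstacle will be making step (iii) uniform over \emph{all} conjugates of $T$ (equivalently, over the $p^2$ involutions of $\Aut(H)$): one must verify that in every case the resulting congruence asserts that a unit multiple of $\ld$ is $\pm1$, which $\ld^2\equiv-1$ never allows. This is exactly where $\mathcal{H}_p$ diverges from $\mathcal{G}_p$: for $\mathcal{G}_p$ the corresponding element of ${\rm I}$ had order $2p$, so its $p$-th power was an involution interchanging the two parts, making $\mathcal{G}_p$ a Cayley graph.
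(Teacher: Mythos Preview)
Your transitivity arguments and the half-arc-transitivity step are essentially correct and parallel the paper's, though ``the induced map must act as $-1$'' is imprecise: what one actually uses is that every automorphism of $H$ induces an automorphism of $H/\Phi(H)$ fixing $\bar b$, and then the multisets of $\bar b$-exponents in $\bar T^{\bar\g}$ and $\bar T^{-1}$ cannot match for $p\ge5$. The paper instead verifies directly that no permutation of the inverses of $u=a$, $v=a^sb^2$, $w=a^{1-s}b^2$ can satisfy the relation $vw^{-1}=u^{2s-1}$ that holds in $H$.

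The not-Cayley argument is where there is a genuine gap. Your reduction --- $\mathcal{H}_p$ is Cayley iff $A$ contains a part-swapping involution $\d_{\g,u,(u^{-1})^\g}$ with $\g^2=1$ and $T^\g=u^\g T^{-1}u$ --- is correct. But steps (ii)--(iii) do not close: conjugating $\g$ to $\a$ replaces $T$ by $T^\psi$, so the congruence check must be carried out for each such $T^\psi$, and you leave this ``main obstacle'' open. It can in fact be handled uniformly by again passing to $H/\Phi(H)$: for involutory $\g$ one has $\bar\g(\bar a)=\bar a^{-1}\bar b^{j}$ and $\bar\g(\bar b)=\bar b$; matching the entries of $\bar T^{\bar\g}$ and $\bar u^{\bar\g}\,\bar T^{-1}\bar u$ with first coordinate $0$ and $-1$ forces $j\equiv0$, and then the entry with first coordinate $-s$ gives $2\equiv-2$ mod $p$; the case $\g=1$ similarly forces $s\equiv0$ or $1$ mod $p$, i.e.\ $\ld\equiv\pm1$ mod $p$, impossible.

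The paper takes a shorter and more structural route. It first shows $\mathcal{H}_p$ is \emph{edge-regular} (no nontrivial $\g\in\Aut(H)$ fixes $T$ setwise, again via the relation $vw^{-1}=u^{2s-1}$), so $|A|=4p^3$ and $|A/R(H)|=4$. It then computes $\d^2$ directly, where $\d=\d_{\b,a^sb^2,1}$: checking the images of $1_0$ and $1_1$ shows $\d^2=\s_{\a,a}R(a^{s-1}b^2)\notin R(H)$, so $\d$ has order $4$ modulo $R(H)$ and $A/R(H)\cong C_4$. The unique index-$2$ subgroup of $A$ is then $\lg R(H),\s_{\a,a}\rg$, which preserves each of $H_0,H_1$; hence no subgroup of order $2p^3$ is vertex-transitive and $\mathcal{H}_p$ is not Cayley. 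This single computation of $\d^2$ is precisely what distinguishes $\mathcal{H}_p$ from $\mathcal{G}_p$, where the analogous $\d$ satisfies $\d^2=R(a^pb^2)\in R(H)$, so that $\d^p$ is a part-swapping involution and $\lg R(H),\d^p\rg$ is vertex-regular.
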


\begin{proof}

Let $\a$ be the automorphism of $H$ taking $(a,b)$ to $(a^{-1},b)$.
Then $T^\a=\{1, a^{-1}, a^{-s}b^2, a^{s-1}b^2\}=a^{-1}T,\,$
and so by Proposition~\ref{normaliser}, we find that $\s_{\a, a}$ is an automorphism of $\mathcal{H}_p$
that fixes $1_0$, and interchanges $1_1$ with $a_1$, and $(a^{s}b^2)_1$ with $(a^{1-s}b^2)_1$.
Next, we note that $\,s(1\!-\!\ld\!-\!\ld p) \equiv 1\!-\! \ld p\,$ mod $p^2,\,$ since
$$
2s(1\!-\!\ld\!-\!\ld p) \ \equiv \ (1+\ld-\ld p)(1\!-\!\ld\!-\!\ld p)  \ \equiv \ (1\!-\!\ld^2) - 2\ld p  \ \equiv \ 2(1\!-\! \ld p)
\ \hbox{ mod } \, p^2.
$$
Accordingly, we find that $a^{s(1-\ld-\ld p)-1}b^2 = a^{-\ld p}b^2,$ which has order $p$,
and then further, that $a^{\ld(p+1)}$ and $a^{s(1-\ld-\ld p)-1}b^2$ satisfy the same defining relations
as the alternative generators $a$ and $b^2$ for $H$, namely $a^{p^2}=(b^2)^{p}=1$ and $b^{-2}ab^2=a^{1+2p}$.
This implies the existence of an automorphism $\b$ that takes $a$ to $a^{\ld(p+1)},$
and $b^2$ to $a^{s(1-\ld-\ld p)-1}b^2 = a^{-\ld p}b^2$, respectively.
The effect of this automorphism $\b$ on the other two non-trivial elements of $S$ is given by
$$(a^{s}b^2)^{\b} \ = \ a^{\ld(p+1)s}a^{s(1-\ld-\ld p)-1}b^2 \ = \ a^{s-1}b^2 \ \quad \hbox{and} \ $$
$$(a^{1-s}b^2)^{\b} \ = \ a^{(1-s)\ld(p+1)}a^{-\ld p} \ = \  a^{\ld-s\ld(p+1)}b^2
\ = \  a^{\ld+1-\ld p -s }b^2 \ = \ a^{2s-s}b^2 \ = \ a^{s}b^2,$$
with the latter occurring since the displayed congruence above gives $s\ld(p+1)) \equiv s-1+\ld p$ mod $p^2.\,$
This now implies that $T^\b=\{1, a^{\ld(p+1)}, a^{s-1}b^2, a^{s}b^2\} = T^{-1}a^sb^2,$
once it is noted that
$$(a^{1-s}b^2)^{-1}a^sb^2 = b^{-2}a^{2s-1}b^2 = a^{(2s-1)(1+2p)} = a^{(\ld-\ld p)(1+2p)}
= a^{\ld-\ld p+2\ld p} = a^{\ld(p+1)}.$$
Hence by Proposition~\ref{normaliser}, we have an automorphism $\d_{\b, a^{s}b^2, 1}$ of $\mathcal{H}_p$
that takes $(1_0, 1_1)$ to $((a^{s}b^2)_1, 1_0)$.

In particular, $\d_{\b, a^{s}b^2, 1}$ takes a vertex of $H_0$ to a vertex of $H_1$, so $\lg R(H), \d_{\b, a^{s}b^2, 1}\rg$
is transitive on the vertices of $\mathcal{H}_p$,
and the orbit of the arc $(1_0,1_1)$ under $\lg \s_{\a, a}, \d_{\b, a^{s}b^2, 1}\rg$ includes
$(1_0,a_1)$ and also $((a^{s}b^2)_1,1_0)$ and $((a^{1-s}b^2)_1,1_0)$,
and therefore  $G = \lg R(H), \s_{\a, a}, \d_{\b, a^{s}b^2, 1}\rg$ acts transitively on the edges of $\mathcal{H}_p$.
Also $p$ is at least $5$, and $\mathcal{H}_p$ is $4$-valent, so the stabiliser $G_v$ of any vertex is a $\{2, 3\}$-group,
and it follows that $R(H)$ is a Sylow $p$-subgroup of $G,$ and then by Theorem~\ref{normal-bi-meta},
that $R(H)$ is normal in $\Aut(\mathcal{H}_p).$

Now suppose $\mathcal{H}_p$ is arc-transitive.
Then since $R(H) \lhd \Aut(\mathcal{H}_p),$ also $\mathcal{H}_p$ is normal locally arc-transitive,
and hence by Proposition \ref{normal-arc-tran}, there exists an automorphism $\g$ of $H$ 
that takes $S$ to $S^{-1}$.
This time the non-trivial elements $u = a$, $\,v = a^{s}b^2$ and $w = a^{1-s}b^2$ in $S$ (which all have order $p^2$)
satisfy $vw^{-1} = u^{2s-1}$, and it is a relatively straightforward exercise to prove that no permutation of the inverses
of those elements satisfies the analogous relation.
In fact, each of $v^{-1}wu^{2s-1}$ and $w^{-1}vu^{2s-1}$ is a non-trivial power of $a$,
while in the other four cases, the relevant product lies outside $\lg a \rg$.
Hence $\mathcal{H}_p$ is not arc-transitive, and is therefore half-arc-transitive.

Next, if the stabiliser in $\mathcal{H}_p$ of the edge $\{1_0,1_1\}$ is non-trivial, then it must contain $\s_{\g,1}$
for some non-trivial automorphism $\g$ of $H$ that preserves $S$.  But a similar exercise to the one above shows that
no non-trivial permutation of the elements $u$, $v$, $w$ defined above satisfies the analogue of
the relation $vw^{-1} = u^{2s-1}$; in fact $wv^{-1}u^{1-2s}$ is a non-trivial power of $a$,
while in the other four cases, the relevant product lies outside $\lg a \rg$.
Hence no such $\g$ exists, and therefore $\mathcal{H}_p$ is edge-regular.

Finally, let $\d$ be the automorphism $\d_{\b, a^{s}b^2, 1}$ of $\mathcal{H}_p$ referred to earlier.
It is easy to check that $\d^2$ takes $1_0$ to $(a^{s-1}b^2)_0$, and $1_1$ to $(a^{s}b^2)_1$,
so that $\d^2$ has the same effect on as $\mathcal{H}_p$ as $\s_{\a, a}R(a^{s-1}b^2)$.
Since $R(H)$ is normal in $A$ with index $4$ but does not contain $\s_{\a, a}$, it follows that the
quotient $A/R(H)$ is cyclic of order $4$, generated by the image of $\d$.  In particular, $A$ has
a unique subgroup of index $2$ (and order $2p^3$), namely $\lg R(H), \d^2 \rg = \lg R(H), \s_{\a, a} \rg$.
This subgroup preserves the two parts $H_0$ and $H_1$ of $\mathcal{H}_p$, however, so does not act transitively
on vertices, and therefore $\mathcal{H}_p$ cannot be a Cayley graph.
\end{proof}

\medskip

\f{\bf Proof of Theorem~\ref{th-normal-bi-meta}}.
The first part of this theorem was proved in Theorem~\ref{normal-bi-meta},
and the rest follows from Lemmas~\ref{bi-p-metafamily1} and   \ref{bi-p-metafamily2}.\hfill\qed

\section{Proof of Theorem~\ref{normal-edge}}
\label{sec:final}

Theorem~\ref{normal-edge} asserts that every normal edge-transitive bi-Cayley graph $\G$
is either arc-transitive, half- arc-transitive or semisymmetric, and that examples of each kind exist.
The first part is easy to see: if $\G$ is not vertex-transitive, then it is semisymmetric, while if it is vertex-transitive
but not arc-transitive, then it is half-arc-transitive.
The second part follows from Theorem~\ref{th-girth6}, Proposition~\ref{semi-dihe} and Lemmas~\ref{bi-p-metafamily1} and \ref{bi-p-metafamily2}, which show the existence of infinitely many examples in each case.



\bibliographystyle{amsplain}

\end{document}